\long\def\comment#1{}
\newtheorem*{rep@theorem}{\rep@title}
\newcommand{\newreptheorem}[2]{%
\newenvironment{rep#1}[1]{%
 \def\rep@title{#2 \ref{##1}}%
 \begin{rep@theorem}}%
 {\end{rep@theorem}}}
\theoremstyle{plain}
\newtheorem{thm}{Theorem}[section]
\newtheorem{prop}[thm]{Proposition}
\newtheorem{cor}[thm]{Corollary}
\newtheorem{lemma}[thm]{Lemma}
\newtheorem{conj}[thm]{Conjecture}
\theoremstyle{definition}
\newtheorem{deff}[thm]{Definition}
\newtheorem{example}[thm]{Example}
\theoremstyle{remark}
\newtheorem{rmk}[thm]{\bf Remark}
\newcommand{\Z}{\mathbb{Z}}
\newcommand{\reg}{\textnormal{reg}}
\newcommand{\spn}{\operatorname{span}}
\newcommand{\gr}{\operatorname{gr}}
\newcommand{\K}{\mathsf k}
\newcommand{\udim}{\operatorname{udim}}
\begin{document}

\title[The uniform dimension of a talented monoid of a graph]{The uniform dimension of a  monoid \\ with applications to graph algebras}

\author[L. G. Cordeiro]{Luiz Gustavo Cordeiro}
\address{Luiz Gustavo Cordeiro: Departamento de Engenharias da Mobilidade - UFSC - Joinville - SC, Brazil}
\email{luiz.cordeiro@ufsc.br}

\author[D. Gon\c{c}alves]{Daniel Gon\c{c}alves}
\address{Daniel Gon\c{c}alves: Departamento de Matem\'{a}tica - UFSC - Florian\'{o}polis - SC, Brazil}
\email{daemig@gmail.com}

\author[R. Hazrat]{Roozbeh Hazrat}

\address{Roozbeh Hazrat: 
Centre for Research in Mathematics and Data Science\\
Western Sydney University\\
Australia} \email{r.hazrat@westernsydney.edu.au}

\subjclass{Primary 16S88.}


\keywords{Leavitt path algebra, graded ideal, regular ideal, talented monoid, graded classification conjecture}

\begin{abstract} 
We adapt Goldie's concept of uniform dimensions from module theory over rings to $\Gamma$-monoids. A $\Gamma$-monoid $M$ is said to have uniform dimension $n$ if $n$ is the largest number of pairwise incomparable nonzero $\Gamma$-order ideals contained in $M$.  

Specializing to the talented monoid of a graph, we show that the uniform dimension provides a rough measure of how the graph branches out.
Since for any order ideal $I$, its orthogonal ideal $I^\perp$ is the largest ideal incomparable to $I$, we study the notions of orthogonality and regularity, particularly when $I^{\perp\perp}=I$.
 We show that the freeness of the action of $\mathbb Z$ on the talented monoid of a graph is preserved under quotienting by a regular ideal. Furthermore, we determine the underlying hereditary and saturated sets that generate these ideals. These results unify recent studies on regular ideals of the corresponding Leavitt path algebras and graph $C^*$-algebras.

We conclude that for graphs $E$ and $F$, if there is a $\mathbb Z$-monoid isomorphism $T_E\cong T_F$, then there is a one-to-one correspondence between the regular ideals of  the associated Leavitt path algebras $L_\K(E)$ and $L_\K(F)$ (and similarly, $C^*(E)$ and $C^*(F)$).   Since the talented monoid $T_E$ is the positive cone of the graded Grothendieck group $K_0^{\gr}(L_\K (E))$, this provides further evidence supporting the Graded Classification Conjecture.
\end{abstract}

\maketitle

\section{Introduction}

Let \( M \) be a right \( R \)-module, where \( R \) is a unital ring. The Goldie uniform dimension of \( M \), denoted \( \udim(M) \), measures how extensively a large part of \( M \), i.e. an essential submodule of \( M \), can be decomposed into a direct sum of uniform submodules, which serve as the smallest building blocks. A fundamental theorem states that any two essential submodules of \( M \) decompose into the same number of uniform submodules, ensuring that \( \udim(M) \) is well-defined. When $M$ is a vector space, uniform submodules become one-dimensional subspaces, and the entire space can be decomposed using them. Thus, the uniform dimension of \( M \) coincides with the usual dimension of the vector space \cite{lam}.

In this note, we adapt the notion of uniform dimension to the setting of a $\Gamma$-monoid $M$, i.e., a commutative monoid $M$ with an action of an abelian group $\Gamma$ on $M$ via automorphisms.
 We show that such a dimension is well-defined for a $\Gamma$-monoid $M$ and show that it corresponds to the largest integer $n$ for which there exist $n$ pairwise incomparable $\Gamma$-order ideals in $M$ (see~\S \ref{sectwo}).

For a $\Gamma$-order ideal $I$ of a $\Gamma$-monoid $M$, its \emph{orthogonal} ideal $I^\perp$ is defined by 
\[I^\perp := \{a\in M \mid a \parallel I \} \cup \{0\},\]
where $\parallel$ stands for incomparability with respect to the algebraic order defined on $M$. Since $I^\perp$ is the largest ideal incomparable to $I$, we study the notion of orthogonality and \emph{regularity}, i.e., when $I^{\perp\perp}=I$.

We then specialize to the setting of the talented monoid of a graph, which is a $\mathbb{Z}$-monoid. For a directed graph $E$, the talented monoid $T_E$ is defined as the free commutative monoid generated by vertices indexed by $\mathbb{Z}$, subject to the relation that each vertex equals the sum of its adjacent vertices (see Definition~\ref{talentedmon}). The monoid $T_E$ carries a natural $\mathbb{Z}$-action and turns out to be the positive cone of the graded Grothendieck group $K_0^{\mathrm{gr}}(L_\K(E))$, where $L_\K(E)$ denotes the Leavitt path algebra over the field $\K$ associated with the graph $E$. A longstanding conjecture, known as the Graded Classification Conjecture, asserts that the graded Grothendieck group (equivalently, the talented monoid \(T_E\)) serves as a complete invariant for classifying Leavitt path algebras (see Conjecture~\ref{alergy1}, \cite[\S 7.3.4]{lpabook}, and \cite{willie}).

The talented monoid of a graph has quite a rich structure, but working with it is straightforward. 
It provides control over the monoid's elements (such as minimal elements, atoms, etc.), and consequently on the geometry of the graphs (the number of cycles, their lengths, etc.). 
 As an example to demonstrate the richness of the talented monoid, consider the following graph $E$: 
\begin{center}
\begin{tikzpicture}[scale=3.5]
\fill (0,0)  circle[radius=.6pt];
\fill (-.35,-.61)  circle[radius=.6pt];
\fill (.35,-.61)  circle[radius=.6pt];
\draw[->, shorten >=5pt] (0,0) to (.35,-.61);
\draw[->, shorten >=5pt] (.35,-.61) to (-.35,-.61);
\draw[->, shorten >=5pt] (-.35,-.61) to [in=-120,out=-60] (.35,-.61) ;
\draw[->, shorten >=5pt] (-.35,-.61) to (0,0);
\draw[->, shorten >=5pt] (0,0) to[in=130,out=50, loop] (0,0);
\draw[->, shorten >=5pt] (-.35,-.61) to[in=250,out=170, loop] (-.35,-.61);
\draw[->, shorten >=5pt] (.35,-.61) to[in=10,out=-70, loop] (-.35,-.61);
\end{tikzpicture}
\end{center}
It is easy to calculate the graph monoid (see Definition~\ref{def:graphmonoid}): $$M_E \cong \{0,x\}, \text{ with } x+x=x.$$ 
 However, using linear algebra results one can show that 
\begin{equation}
    T_E = \left\{ (a,b,c) \in \mathbb{Z}^3 \mid (a,b,c) \cdot \mathbf{z} > 0 \right\}\cup \{0\}, 
\end{equation}
\noindent where \( \mathbf{z} \) is the column vector  
\[
\mathbf{z} =
\begin{bmatrix}
    \frac{1}{3} \left( 1 - 5 \sqrt[3]{\frac{2}{11+3\sqrt{69}}} 
    + \sqrt[3]{\frac{1}{2} \left( 11+3\sqrt{69} \right)} \right) \\[10pt]
    \frac{1}{3} \left( -1 + \sqrt[3]{\frac{1}{2} \left( 25-3\sqrt{69} \right)} 
    + \sqrt[3]{\frac{1}{2} \left( 25+3\sqrt{69} \right)} \right) \\[10pt]
    1
\end{bmatrix},
\]

and the action of $\mathbb Z$ on $T_E$ takes the form  
$${}^1(a,b,c)=(a+c,a+b+c, b+c).$$

As mentioned earlier, we apply our general results on $\Gamma$-monoids to the talented monoid associated with a graph. More precisely, we study the uniform dimension of the talented monoid of a graph and show that it roughly captures how the graph branches out (see Example~\ref{uniformexam}). We also examine the orthogonality of  $\mathbb Z$-order ideals of the talented monoid and prove that  the freeness of the action of $\mathbb Z$ on the talented monoid of a graph  is preserved under quotienting by a regular ideal. We then determine the underlying hereditary and saturated sets that generate these ideals.  

We will relate our concepts of orthogonality and regularity in talented monoids of graphs to the corresponding notions in Leavitt path algebras. To clarify this connection, let $R$ be a ring and $I$ a two-sided ideal of $R$. Define the \emph{orthogonal ideal} to $I$ as 
$$I^\perp:=\{r\in R \mid rI=Ir=0\}.$$
Clearly, $I^\perp$ is an ideal and $I \subseteq I^{\perp\perp}$. An ideal $I$ is called \emph{regular} if $I= I^{\perp \perp}$. In the setting of graph algebras, i.e. Leavitt path algebras and graph $C^*$-algebras,  regular ideals have been the recent topic of investigations~\cite{galera, CCL, dd0, Vas1, Vas2}. It was shown that for Leavitt path algebras (or graph $C^*$-algebras), regular ideals are graded ideals (or gauge-invariant ideals) and the underlying generating sets of vertices associated with them have been determined.  To be precise, for a directed graph $E$ and a hereditary and saturated subset $H \subset E$, the \emph{orthogonal subset} $H^\perp$ is defined, and it is shown that $I(H)^\perp = I(H^\perp)$, where $I(H)$ is the ideal (or closed ideal) of the Leavitt path algebra $L_\K(E)$ (or graph $C^*$-algebra $C^*(E)$) generated by $H$. Consequently, it became possible to determine graphs where the ideals of the corresponding graph algebra are all regular.

In the setting of the talented monoid $T_E$ associated to a graph $E$, we show that $\langle H \rangle^\perp = \langle H^\perp \rangle$, where $\langle H \rangle$ is the order ideal of $T_E$ generated by $H$. We conclude that for row-finite graphs $E$ and $F$, if there is a $\mathbb Z$-monoid isomorphism $T_E\cong T_F$,  then the associated Leavitt path algebras (and graph $C^*$-algebras) have isomorphic lattices of regular ideals. This allows us to make a bridge between graph algebras and the graded $K$-theory invariant, as $T_E$ is the positive cone of the graded Grothendieck group $K^{\gr}_0(L_\K(E))$. 
 This provides more evidence for the Graded Classification Conjecture.

We focus on studying the regular ideals of the monoid \(T_E\) for row-finite graphs, particularly finite graphs, even though our approach can be extended to arbitrary graphs by suitably modifying the talented monoid \(T_E\) (see~\cite{liahaz}). One reason for this restriction is that the Graded Classification Conjecture~\ref{alergy1} is concerned with finite graphs. Additionally, the proofs and formulations are more transparent in the row-finite setting.

The paper is organized as follows: In Section~\ref{sectwo}, we define the notion of uniform dimension for a $\Gamma$-monoid. We study  the orthogonal ideal of a given $\Gamma$-order ideal of a $\Gamma$-monoid and consequently we define regular ideals in the setting of $\Gamma$-monoids. In Section~\ref{graphsdeffff}, we specialize to $\mathbb Z$-monoids arising from directed graphs, namely talented monoids. We determine the regular ideals of the talented monoid $T_E$ of a graph $E$ based on the underlying hereditary and saturated subsets that generate these ideals. 
 We show that the freeness of the action of $\mathbb Z$ on the talented monoid of a graph is preserved under quotienting by a regular ideal (Theorem~\ref{freeactiondom}). In Theorem~\ref{home20} we characterize talented monoids with uniform dimension $n$. We observe that  when $\udim(T_E)=n$, the graph $E$ branches out into $n$ strands which themselves do not branch out anymore.

In Section~\ref{secthree}, we relate the regular ideals of the talented monoid of the graph $E$ to those of the graph algebras associated to $E$. These results unify the recent studies on the regular ideals in both the corresponding Leavitt path algebras and graph $C^*$-algebras.

Finally, in Section~\ref{lastsecui} we completely determine the indecomposable $\mathbb Z$-order ideals of the talented monoid associated to a graph, and show how any order ideal can be expressed as a direct sum of such indecomposable ideals.  In Corollary~\ref{fgfyr67433}, we further unify the notion of decomposability of graph algebras and decomposability of their associated talented monoids.

\section{Ideals and annihilators in monoids}\label{sectwo}

Let $M$ be a commutative monoid written additively with unit \(0\). We define the \emph{algebraic pre-ordering} on the monoid $M$ by $a\leq b$ if $b=a+c$, for some $c\in M$.  We write $a<b$ if $b=a+c$ with $c\not=0$. Note that with this definition we can have $a<a$, for some $a\in M$. For \emph{nonzero} elements $a,b\in M$,  we write $a \parallel b$ if $a$ and $b$ are not comparable, i.e., $a \not =b$, $a\not < b$ and $b \not < a$. We further write $a\parallel S$ if $a$ is not comparable with any (nonzero) element of the subset $S$ of $M$.  We say the subsets $S_1$ and $S_2$ of $M$ are \emph{incomparable}, if $S_1\parallel S_2$, i.e., $a\parallel b$, for all nonzero elements $a\in S_1$ and $b\in S_2$.

A monoid is called \emph{conical} if $a+b=0$ implies $a=b=0$ and it is called \emph{cancellative} if $a+b=a+c$ implies $b=c$. An element $a\in M$ is called an \emph{atom} if $a=b+c$ implies $b=0$ or $c=0$. A nonzero element $a\in M$ is called \emph{minimal} if $b\leq a$ implies $a\leq b$.  When $M$ is conical and cancellative, the algebraic preordering is a partial order, so these notions coincide with the more intuitive definition of minimality, i.e., $a$ is minimal if $0\not = b\leq a$ implies $a=b$.

Throughout this note we mainly work with refinement monoids. Recall that a \emph{refinement} monoid is a commutative monoid, $M$, such that for any elements $a_0, a_1, b_0, b_1$ of $M$ such that $a_0+a_1=b_0+b_1$, there are elements $c_{00}, c_{01}, c_{10}, c_{11}$ of $M$ such that $a_0=c_{00}+c_{01}$, $a_1=c_{10}+c_{11}$, $b_0=c_{00}+c_{10}$, and $b_1=c_{01}+c_{11}$.

Consider now an abelian group $\Gamma$ acting on a commutative monoid $M$ by monoid automorphisms, i.e., $M$ is a \emph{$\Gamma$-monoid}.  For $\alpha \in \Gamma$ and $a\in M$, we denote the action of $\alpha$ on $a$ by ${}^\alpha a$. Note that, since the algebraic order is given in terms of the monoid operation, it is also preserved by the action of $\Gamma$. A monoid homomorphism $\phi:M_1 \rightarrow M_2$ is called $\Gamma$-monoid homomorphism if $\phi$ respects the action of $\Gamma$, i.e., $\phi({}^\alpha a)={}^\alpha \phi(a)$.


For a $\Gamma$-monoid $M$  we distinguish two types of submonoids. An \emph{order ideal} of $M$ is a submonoid $I$ which is also an ideal with respect to the natural order of $M$, i.e., if $x\in I$ and $y \leq x$, then $y\in I$. A \emph{$\Gamma$-order ideal} is an order ideal of $M$ which is closed under the action of $\Gamma$. Every $\Gamma$-order ideal $I$ of $M$ is a $\Gamma$-monoid on its own right, and the restriction of the natural order of $M$ to $I$ is the natural order of $I$.

Given $a\in M$, we denote by  $\langle a\rangle$ the $\Gamma$-order ideal generated by $a$. We have 
\begin{equation}\label{idealorderandnotorder}
\langle a\rangle = \big \{b\in M \mid b\leq\sum_{\alpha\in\Gamma} k_\alpha{}^\alpha a, \text{for some } k_\alpha\in\mathbb{N} \big \}.
\end{equation}

Next, we adapt the notion of \emph{essential ideals} and \emph{uniform ideals} in algebra to the setting of $\Gamma$-monoids and introduce the notion of uniform dimension for an arbitrary $\Gamma$-monoid. We will see that when the $\Gamma$-monoid is conical and refinement, our definition of uniform dimension coincides with the classical notion for rings (i.e., in terms of direct sums). For $\Gamma$-order ideals $I$ and $J$ of a $\Gamma$-monoid $M$, we write $I+J:=\{i+j\mid i\in I, j\in J\}$.

\begin{deff}\label{uniformdef}
Let \( M \) be a \(\Gamma\)-monoid. A \(\Gamma\)-order ideal \( I \) of \( M \) is called \emph{essential} if it has a nonzero intersection with every nonzero \(\Gamma\)-order ideal of \( M \). A nonzero \(\Gamma\)-order ideal \( I \) is said to be \emph{uniform} if any two nonzero \(\Gamma\)-order ideals contained in \( I \) have a nonzero intersection. Finally, a \(\Gamma\)-order ideal \( I \) is called \emph{superfluous} if for any \(\Gamma\)-order ideal \( J \), the equality \( I + J = M \) implies \( J = M \). 
\end{deff}

\begin{deff}\label{defuniformdem}
Let $M$ be a $\Gamma$-monoid. The \emph{uniform dimension} of $M$ is defined as 
\[
\udim(M)=\sup\{k \mid M \text{ contains $k$ pairwise incomparable nonzero  $\Gamma$-order ideals}\}.
\]

\end{deff}

Note that $\udim(M)=1$ if and only if $M$ is uniform.  

We write $I\subseteq_{e} M$ when $I$ is an essential  $\Gamma$-order ideal of $M$.  Note that the (essential) $\Gamma$-order ideal property is transitive: if $I$ is a (essential) $\Gamma$-order ideal of $M$ and $J$ is a (essential) $\Gamma$-order ideal of $I$, then $J$ is a (essential) $\Gamma$-order ideal of $M$. Note that by definition,  $I$ is a uniform ideal if and only if for any two nonzero ideals $J_1, J_2 \subseteq I$,  we have $J_1\cap J_2 \not = \{ 0\}$.   It follows that if the $\Gamma$-monoid $M$ is such that every nonzero $\Gamma$-order ideals is essential, then $M$ is uniform. Note also that the condition $I\cap J =\{0\}$ is equivalent to $I \parallel J$.

 For the next definition, recall the notion of comparability of elements, and that $\parallel$ is a binary relation used for nonzero elements of a monoid. 

\begin{deff}
    
    Let $M$ be a $\Gamma$-monoid and $I$ a $\Gamma$-order ideal of $M$. We define the \emph{orthogonal} set to $I$ as 
    \[I^\perp := \{a\in M \mid a \parallel I \} \cup \{0\}.\]
     We say that a $\Gamma$-order ideal $I$ is \emph{regular} if $I^{\perp\perp} = I$.  

\end{deff}

Note that $I\cap I^\perp =\{0\}$, i.e., $I\parallel I^\perp$.  
It is not hard to see $M\backslash I^\perp=\bigcup_{a\in M}(I+a) \backslash \{0\}$ which is a subsemigroup of $M$. Furthermore, if $\phi:M\rightarrow N$ is a $\Gamma$-isomorphism of $\Gamma$-monoids, then $\phi(I^\perp)=\phi(I)^\perp$.
Notice that $I \subseteq I^{\perp\perp}$.  
 Note also that if $M$ is refinement, $I\cap J=\{0\}$ if and only if any element of $I+J$ has a unique presentation, i.e., $i+j=i'+j'$ with $i,i'\in I$ and $j,j'\in J$ implies $i=i'$ and $j=j'$. Furthermore, if $M$ is refinement, $I+J$ is a $\Gamma$-order ideal. 
We write $I+J$ as the \emph{direct sum} $I\oplus J$ if  $I\cap J=\{0\}$. 

\begin{example}
    The sum of order ideals in a non-refinement monoid is not necessarily an order ideal. For example, let $M=\{t,b,l,r,m\}$ be the diamond semilattice, where the only order relations are that $b\leq x\leq t$ for all $x\in M$, considered as a commutative monoid under joins: $x+y=\sup\{x,y\}$. Then the order of $M$ as a monoid is the same as its original order as a lattice.
    
    In this case, the order ideals of $M$ are precisely the subsets $X$ of $M$ which are closed under joins and \emph{downsets} of $M$, i.e., those $X$ such that $x\in X$ and $y\leq x$ imply $y\in X$ (i.e., \emph{downsets}). Then, $I=\{l,b\}$ and $J=\{r,b\}$ are order ideals of $M$, but $I+J=M\setminus\{m\}$ is not.
\end{example}

For a monoid $M$, we denote $Z(M):=\{a\in M \mid a+b=0, \text{ for some } b\}$.  Clearly $Z(M)$ is a subgroup and if $I$ is an order ideal, then $Z(M)\subseteq I$. Since the talented monoids are conical, throughout we work with conical monoids. Note that $M$ is conical if and only if $Z(M)=\{0\}$. 



For the rest of the section, we assume that the monoids are conical and refinement. This allows us to form the $\Gamma$-order ideal $I+J$ from two $\Gamma$-order ideals $I$ and $J$. Furthermore, we can prove that $I^\perp$ is also a $\Gamma$-order ideal. Finally, we will show that under these assumptions, the notion of uniform dimension for monoids coincides with the classical definition via direct sums from the theory of rings and modules.

\begin{lemma}\label{lynchpinlem}
 Let $M$ be a conical refinement monoid, $I$ be a $\Gamma$-order ideal of $M$, and  $a_1, \dots, a_n\in M$, with $n \in \mathbb N$. Then, $\sum_{i=1}^n a_i \parallel I$ if and only if $a_i\parallel I$, for all $1\leq i \leq n$. 
\end{lemma}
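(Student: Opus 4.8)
The plan is to reduce everything to the case $n=2$ and then induct, handling the two implications separately. We may assume every $a_i \neq 0$, since deleting zero summands alters neither $\sum_i a_i$ nor the conditions $a_i \parallel I$ (by convention $0 \in I^\perp$), and if every $a_i = 0$ the statement is vacuous. The structural inputs will be that $I$ is an order ideal (a downset closed under sums), that $M$ is conical (so that a sum of nonzero elements is again nonzero, keeping the relation $\parallel$ applicable to $\sum_i a_i$), and---only for the nontrivial implication---the refinement property.

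For the forward implication, I would first record that $I^\perp$ is downward closed: if $x \parallel I$ and $0 \neq y \leq x$, then $y \parallel I$. Indeed, were some nonzero $b \in I$ comparable to $y$, then either $b \leq y$, whence $b \leq x$ by transitivity and $x$ is comparable to the nonzero element $b \in I$, a contradiction; or $y < b$, whence $y \leq b \in I$ forces $y \in I$ as $I$ is a downset, and then $y \leq x$ makes $x$ comparable to the nonzero $y \in I$, again a contradiction. Since $a_i \leq \sum_j a_j$ for each $i$, the forward direction follows at once, with no appeal to refinement.

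The content of the lemma is the reverse implication, which by induction reduces to $n = 2$: assuming $a \parallel I$ and $b \parallel I$ with $a, b \neq 0$, I must show $a + b \parallel I$. Conicality gives $a + b \neq 0$. Suppose for contradiction that $a + b$ is comparable to some nonzero $c \in I$. If $a + b \leq c$ then $a + b \in I$, and since $a \leq a + b$ we get $a \in I$; but a nonzero element of $I$ is comparable to itself, contradicting $a \parallel I$. The case $a + b = c$ is identical. The remaining case $c < a + b$ is the crux, and where I expect the real difficulty: a priori $c$ bears no relation to the individual summands $a$ and $b$, so refinement is needed to split it compatibly.

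To resolve $c < a + b$, write $a + b = c + d$ with $d \neq 0$ and apply refinement to this identity, obtaining $c_{00}, c_{01}, c_{10}, c_{11}$ with $a = c_{00} + c_{01}$, $b = c_{10} + c_{11}$, $c = c_{00} + c_{10}$, and $d = c_{01} + c_{11}$. From $c = c_{00} + c_{10} \in I$ and the downset property, both $c_{00}, c_{10} \in I$. Now $c_{00} \leq a$ together with $a \parallel I$ forces $c_{00} = 0$, for otherwise $a$ would be comparable to the nonzero $c_{00} \in I$; symmetrically $c_{10} \leq b$ and $b \parallel I$ give $c_{10} = 0$. Hence $c = c_{00} + c_{10} = 0$, contradicting the choice of $c$. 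This settles $n = 2$, and the general reverse implication follows by applying it to $\big(\sum_{i=1}^{n-1} a_i\big) + a_n$, the partial sum being nonzero by the induction hypothesis. The refinement step is the single place where the summation is reconciled with membership in $I$, and I regard it as the heart of the argument.
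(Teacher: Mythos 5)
Your proof is correct and follows essentially the same route as the paper's: reduce to $n=2$, handle the easy direction using only that $I$ is an order ideal, and in the hard case $c < a+b$ apply refinement to $a+b=c+d$ so that $c=c_{00}+c_{10}$ splits into pieces lying below $a$ and $b$ respectively. The paper phrases this as a contraposition (concluding that one of $c_{00},c_{10}$ is nonzero, hence one of $a,b$ is comparable to $I$) while you argue by direct contradiction ($c_{00}=c_{10}=0$ forces $c=0$), but this is only a cosmetic difference.
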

\begin{proof}
We prove that $a+b \parallel I$ if and only if $a \parallel I$ and $b \parallel I$. The general case follows from induction. Throughout the proof we use the conicality of the monoid, i.e., $a+b\not= 0$ if and only if $a\not=0$ or $b\not = 0$. We proceed by contraposition.

Suppose that $a$ is comparable to some nonzero element $i$ of $I$. If $i\leq a$, then $i\leq a+b$ as well. If $a\leq i$ then $a\in I$ and $a\leq a+b$. In any case, $a+b$ becomes comparable with some nonzero element of $I$. By symmetry, the same holds if $b$ is comparable to some element of $I$.
Thus, if $a$ or $b$ are comparable to some nonzero element of $I$, so is $a+b$.

Now, suppose that $a+b$ is comparable to some nonzero element $i$ of $I$. If $a+b\leq i$ then $a+b\in I$ and, since $a,b\leq a+b$, we conclude that $a,b\in I$ as well. On the other hand, if $a+b\geq i$ then we may write $a+b=i+j$ for some $j$. By the refinement property, there are $c_{00},c_{01},c_{10},c_{11}$ such that
\[a=c_{00}+c_{01},\quad b=c_{10}+c_{11},\quad i=c_{00}+c_{10},\quad\text{and}\quad j=c_{01}+c_{11}.\]
Then, $a$ and $b$ are comparable to $c_{00}$ and $c_{10}$, respectively, both of which are elements of $I$. Since $i\neq 0$,  at least one of those elements must also be nonzero. Therefore, at least one of $a$ and $b$ is comparable to a nonzero element of $I$.
Thus, if $a+b$ is comparable to a nonzero element of $I$, then either $a$ or $b$ is also comparable to a nonzero element of $I$. \qedhere
\end{proof}

 For a conical refinement monoid, the orthogonal set of a $\Gamma$-order ideal is also a $\Gamma$-order ideal, as we prove below. 

\begin{prop}
    Let $M$ be a conical refinement $\Gamma$-monoid and $I$ a $\Gamma$-order ideal of $M$. Then, $I^\perp$ is a $\Gamma$-order ideal of \(M\).
\end{prop}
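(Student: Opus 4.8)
The plan is to verify directly the three defining conditions for $I^\perp$ to be a $\Gamma$-order ideal: that it is a submonoid of $M$, that it is downward closed with respect to the algebraic order, and that it is invariant under the $\Gamma$-action. Since $0\in I^\perp$ by definition, the only real content of the submonoid condition is closure under addition, and this is precisely where the refinement hypothesis will be used via Lemma~\ref{lynchpinlem}.

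First I would establish closure under addition. Given $a,b\in I^\perp$, the cases in which one of them is zero are immediate, so the substantial case is when both are nonzero, i.e.\ $a\parallel I$ and $b\parallel I$. Here I would invoke Lemma~\ref{lynchpinlem} directly to conclude $a+b\parallel I$, while conicality guarantees $a+b\neq 0$; hence $a+b\in I^\perp$. I expect this to be the crux of the argument, as it is the only step that genuinely requires the refinement property — everything else is elementary order-theoretic bookkeeping that uses only conicality and the fact that $I$ is a $\Gamma$-order ideal.

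Next I would check the order-ideal property. Take $x\in I^\perp$ and $y\leq x$ with $y\neq 0$ (the case $y=0$ being trivial); then $x\neq 0$ by conicality, so $x\parallel I$. Arguing by contraposition, suppose $y$ is comparable to some nonzero $i\in I$. If $i\leq y$, then $i\leq y\leq x$ gives $i\leq x$; if instead $y\leq i$, then $y\in I$ since $I$ is an order ideal, and $y\leq x$ exhibits $x$ as comparable to the nonzero element $y$ of $I$. In either case $x$ would be comparable to a nonzero element of $I$, contradicting $x\in I^\perp$. Thus $y\parallel I$ and $y\in I^\perp$.

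Finally, for $\Gamma$-invariance I would use that $\Gamma$ acts by order-preserving automorphisms and that $I$ is $\Gamma$-closed. Given nonzero $a\in I^\perp$ and $\alpha\in\Gamma$, note ${}^\alpha a\neq 0$; if ${}^\alpha a$ were comparable to some nonzero $i\in I$, then applying ${}^{\alpha^{-1}}$, which preserves the order, would show that $a$ is comparable to ${}^{\alpha^{-1}}i$, a nonzero element of $I$ (using that $I$ is $\Gamma$-invariant and the action is by automorphisms). This again contradicts $a\in I^\perp$, so ${}^\alpha a\in I^\perp$. Combining the three steps shows $I^\perp$ is a $\Gamma$-order ideal.
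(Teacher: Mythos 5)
Your proposal is correct and follows essentially the same route as the paper's proof: closure under addition via Lemma~\ref{lynchpinlem} together with conicality, the downward-closure argument by contradiction splitting into the cases $i\leq y$ and $y\leq i$, and the $\Gamma$-invariance (which the paper simply declares clear, while you spell it out). No gaps.
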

\begin{proof}

It is clear that $I^\perp$ is invariant by $\Gamma$. Let $a, b\in I^\perp$. If $a$ or $b$ is zero, then $a+b\in I^\perp$. If $a$ and $b$  are nonzero, then Lemma~\ref{lynchpinlem} implies $a+b\in I^\perp$. So $I^\perp$ is a submonoid of $M$. Suppose $a\in I^\perp$ and $b\leq a$. We need to show that $b\in I^\perp$. Suppose $b$ is not zero and $b\not \in I^\perp$. Then by definition, there is a nonzero $i\in I$ such that either $b\leq i$ or $i\leq b$. If $i\leq b$, then $b=i+j$ for some $j$. Since $a=b+b'$, it follows that $a=i+j+b'$, i.e., $a$ is comparable with a nonzero element of $I$, which is a contradiction. On the other hand, if $b\leq i$ then it follows that $b\in I$ which in turn implies that $a\in I^\perp$ is comparable with a nonzero element of $I$, again a contradiction. 
\end{proof}

Next, we prove that if a refinement monoid splits as the direct sum of an ideal and its annihilator, then the ideal must be regular.

\begin{prop}\label{orthoreg}
Let $I$ be a $\Gamma$-order ideal of the conical refinement $\Gamma$-monoid $M$. If $M=I\oplus I^\perp$ then $I$ is regular.
\end{prop}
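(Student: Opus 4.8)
The goal is to establish the regularity identity $I^{\perp\perp}=I$. Since the inclusion $I\subseteq I^{\perp\perp}$ holds for any $\Gamma$-order ideal (as noted right after the definition of orthogonality), the entire content lies in proving the reverse inclusion $I^{\perp\perp}\subseteq I$. The plan is to take an arbitrary element of $I^{\perp\perp}$ and exhibit it as an element of $I$, exploiting the hypothesized splitting $M=I\oplus I^\perp$.

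First I would dispense with the trivial case: $0\in I$ automatically, so fix a \emph{nonzero} $a\in I^{\perp\perp}$. By definition, $I^{\perp\perp}=(I^\perp)^\perp$, so being a nonzero element of $I^{\perp\perp}$ means precisely that $a\parallel I^\perp$, i.e.\ $a$ is incomparable with every nonzero element of $I^\perp$. This is the only property of $a$ I will use.

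Next I would invoke the hypothesis $M=I+I^\perp$ to write $a=i+j$ with $i\in I$ and $j\in I^\perp$. The key observation is that the summand $j$ lies below $a$ in the algebraic order: from $a=j+i$ we get $j\leq a$ directly from the definition of $\leq$. Thus $j$ is an element of $I^\perp$ that is comparable with $a$. Combined with the fact that $a\parallel I^\perp$, this forces $j$ to be zero, for otherwise $j$ would be a nonzero element of $I^\perp$ comparable with $a$, contradicting incomparability. Hence $a=i\in I$, which gives $I^{\perp\perp}\subseteq I$ and completes the argument.

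I do not anticipate a serious obstacle here; the proof is short once one recognizes the right move, namely decomposing $a$ along $I\oplus I^\perp$ and noting that the $I^\perp$-component automatically sits below $a$. The one point requiring a little care is making sure the decomposition hypothesis $M=I\oplus I^\perp$ is used only through the existence of the splitting $a=i+j$ (uniqueness of the presentation, guaranteed by the refinement property and $I\cap I^\perp=\{0\}$, is not actually needed), and that conicality is not secretly required beyond what is already built into the standing assumptions of this part of the section.
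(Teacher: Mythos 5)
Your proof is correct and follows essentially the same route as the paper's: decompose $a=i+j$ along $M=I\oplus I^\perp$, observe that the $I^\perp$-component is comparable with $a$, and conclude $j=0$ from $a\parallel I^\perp$. No further comment is needed.
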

\begin{proof}
    We have the inclusion $I \subseteq I^{\perp \perp}$. Now, suppose $a\in I^{\perp \perp}$. We will show that $a\in I$. If $a=0$ then we immediately have $a\in I$, so we may assume that $a\neq 0$, and thus $a \parallel I^\perp$. However, by the assumption, $a=x+y$, where $x\in I$ and $y\in I^\perp$. If $y\not =0$, then \(a\) would be comparable to a nonzero element of $I^\perp$, contradicting the condition that $a\in I^\perp$.
    Hence, $y=0$ and thus $a=x\in I$.
\end{proof}

\begin{rmk}
    The converse of Proposition~\ref{orthoreg} is not true. For example, let $X$ be a topological space and $M$ be the monoid whose elements are the open subsets of $X$, with operation given by union. Then, $M$ is a conical, non-cancellative, refinement monoid, and the natural order of $M$ is set inclusion.
    
    Given an open subset $U$ of $X$, define the order ideal
    $$I(U)=\left\{A\in M \mid A\subseteq U\right\}.$$
    Then, $I(U)^\perp$ consists of all $A\in M$ for which $A\cap B=\varnothing$, whenever $B$ is an open subset of $U$. This is equivalent to requiring that $A\subseteq\operatorname{int}(X\setminus U)$, so $I(U)^\perp=I(\operatorname{int}(X\setminus U))$.
    
    The map $U\mapsto I(U)$ is an injective order-preserving function, with inverse $I\mapsto \bigcup I$. In particular, $I(U)$ is regular if and only if
    $$U = \operatorname{int}(X\setminus\operatorname{int}(X\setminus U))=\operatorname{int}(\overline{U}),$$
    that is, $U$ is a regular open set.
    
    In the specific case of $X=[0,1]$ with its standard topology, take $U=[0,1/2)$. Then $I(U)$ is regular, but $I(U)\oplus I(U)^\perp$ consists of all open subsets of $X$ which do not contain $1/2$, and this is clearly not $M$.
\end{rmk}

\begin{prop}\label{finalbrushes}
Let $M$ be a conical refinement $\Gamma$-monoid, $S$  an index set, and  $\{ I_s \}_{s\in S}$ a set of nonzero $\Gamma$-order ideals of $M$. Then:

\begin{enumerate}[\upshape(1)]

\item $(\sum_{s\in S}I_s)^\perp = \bigcap_{s\in S}I^\perp_s$.

\item If $I$ is an essential ideal then $I^\perp=\{0\}$.

\item We have $\bigoplus_{s\in S}I_s$ if and only if $\{ I_s \}_{s\in S}$ are pairwise disjoint.
\end{enumerate}
    \end{prop}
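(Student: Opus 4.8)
The statement bundles three claims of quite different character, so the plan is to treat them separately and concentrate effort on the nontrivial inclusion in part (1). For that part I would first dispatch the inclusion $(\sum_{s}I_s)^\perp \subseteq \bigcap_s I_s^\perp$ by noting that $\perp$ reverses inclusions: if $I\subseteq J$ then any nonzero element incomparable with every nonzero element of $J$ is in particular incomparable with every nonzero element of $I$, so $J^\perp\subseteq I^\perp$; applying this to $I_s\subseteq\sum_t I_t$ gives the inclusion into the intersection. The reverse inclusion $\bigcap_s I_s^\perp \subseteq (\sum_s I_s)^\perp$ is where the work lies. Given a nonzero $a\in\bigcap_s I_s^\perp$, I must show $a\parallel\sum_s I_s$. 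The device is to pass to the $\Gamma$-order ideal $\langle a\rangle$: since each $I_s^\perp$ is a $\Gamma$-order ideal containing $a$ (by the preceding proposition), minimality gives $\langle a\rangle\subseteq I_s^\perp$ for every $s$, which upgrades ``$a$ is incomparable with $I_s$'' to ``every nonzero element of $\langle a\rangle$ is incomparable with $I_s$''. Now take an arbitrary nonzero $b\in\sum_s I_s$ and write $b=\sum_i b_i$ as a finite sum with each $b_i\in I_{s_i}$; each nonzero $b_i$ is then incomparable with every nonzero element of $\langle a\rangle$, i.e. $b_i\parallel\langle a\rangle$. Applying Lemma~\ref{lynchpinlem} to the ideal $\langle a\rangle$ yields $b\parallel\langle a\rangle$, and since $a\in\langle a\rangle$ this gives $b\parallel a$. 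As $b$ was arbitrary, $a\in(\sum_s I_s)^\perp$.

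Part (2) should be a one-line consequence of the definitions: $I^\perp$ is a $\Gamma$-order ideal satisfying $I\cap I^\perp=\{0\}$, so if $I^\perp$ were nonzero then essentiality of $I$ would force $I\cap I^\perp\neq\{0\}$, a contradiction. For part (3), reading $\bigoplus_s I_s$ as the assertion that $I_s\cap\sum_{t\neq s}I_t=\{0\}$ for every $s$ (equivalently, unique presentations, via the refinement remark preceding the proposition), the forward direction is immediate from $I_s\cap I_t\subseteq I_s\cap\sum_{u\neq s}I_u=\{0\}$. For the converse the crucial observation is downward-closure of order ideals: if $0\neq a\in I_s\cap\sum_{t\neq s}I_t$, write $a=\sum_j c_{t_j}$ with $c_{t_j}\in I_{t_j}$ and $t_j\neq s$; then $c_{t_j}\leq a\in I_s$ forces $c_{t_j}\in I_s\cap I_{t_j}=\{0\}$, so $a=0$, a contradiction.

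The main obstacle is the reverse inclusion in part (1), specifically the step converting pointwise incomparability of $a$ with each $I_s$ into incomparability of the full sum $\sum_i b_i$ with $a$. The difficulty is that incomparability is not manifestly additive, and I expect the clean resolution to be exactly the device above: replacing $a$ by $\langle a\rangle$ so that Lemma~\ref{lynchpinlem}, which is phrased as incomparability of a sum with a \emph{fixed} $\Gamma$-order ideal, can be invoked on the summands $b_i$. The hinge I would verify carefully is the passage from $a\in I_s^\perp$ to $\langle a\rangle\subseteq I_s^\perp$; this is legitimate precisely because $I_s^\perp$ is itself a $\Gamma$-order ideal and $\langle a\rangle$ is the smallest $\Gamma$-order ideal containing $a$.
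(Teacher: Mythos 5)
Your proposal is correct in all three parts, and it differs from the paper's proof in two places worth noting. For the reverse inclusion in (1), the paper argues directly on elements: given $x\in\bigcap_s I_s^\perp$ comparable to $y=y_1+\cdots+y_n$ with $y_i\in I_{s_i}$, it treats $x\geq y$ by transitivity and $x<y$ by applying the refinement axiom to $x+t=\sum_i y_i$, producing pieces $a_{1i}\leq y_i$ of $x$ lying in $I_{s_i}$, at least one nonzero. You instead promote $a$ to the ideal $\langle a\rangle\subseteq\bigcap_s I_s^\perp$ (legitimate because each $I_s^\perp$ is a $\Gamma$-order ideal, by the proposition preceding this one) and then invoke Lemma~\ref{lynchpinlem} with $I=\langle a\rangle$ on the summands of an arbitrary nonzero $b\in\sum_s I_s$. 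The underlying refinement computation is the same one, but you obtain it prepackaged inside the lemma rather than repeating it; the only care required is discarding zero summands so that $\parallel$ is meaningful, which you do. For the converse in (3), your argument is genuinely more elementary than the paper's: the paper appeals again to refinement ``along with an argument similar to part (1),'' whereas you observe that each summand $c_{t_j}$ of $a\in I_s\cap\sum_{t\neq s}I_t$ satisfies $c_{t_j}\leq a\in I_s$ and hence lies in $I_s\cap I_{t_j}=\{0\}$ by downward closure of order ideals alone, so neither conicality nor refinement is needed there. Part (2) coincides with the paper's one-line argument.
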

\begin{proof}
\begin{enumerate}[\upshape(1)]
    \item Let $0\not = x\in (\sum_{s\in S}I_s)^\perp$. Thus $x \parallel \sum_{s\in S}I_s$. Since for any $s\in S$, $I_s\subseteq \sum_{s\in S}I_s$,  it follows that $x \parallel I_s$, for any $s\in S$. Thus $x\in \bigcap_{s\in S}I^\perp_s$. Conversely, suppose $0\not= x\in \bigcap_{s\in S}I^\perp_s$. If $x \not \in (\sum_{s\in S}I_s)^\perp $, then there is an $0\not= y\in \sum_{s\in S}I_s$ such that either $x\geq y$ or $x< y$. Write $y=y_1+\cdots +y_n$, where $0\not = y_i \in I_{s_i}$, $s_i\in S$, $1\leq i \leq n$. If $x\geq y$ then  $x\geq y \geq y_i$, $1\leq i\leq n$, which implies that $x \not \in I_{s_i}^\perp$, $1\leq i\leq n$, a contradiction. If $x<y=y_1+\cdots +y_n$, then $x+t=y_1+\cdots +y_n$, for some $t\in M$. Using the refinement property, there are $\{ a_{ij} \}_{1\leq i\leq 2, 1\leq j\leq n} \subseteq M$ such that 
    $$x=a_{11}+\cdots +a_{1n} \text{ and } t=a_{21}+\cdots +a_{2n} \text{ and } y_i=a_{1i}+a_{2i}, 1\leq i \leq n.$$
    Thus, on one hand, $x$ is comparable with $a_{1i}$, $1\leq i \leq n$. On the other hand these elements all belong to $I_i$ as $y_i\geq a_{1i}$, $1\leq i \leq n$. Since not all of them can be zero (as $x$ is not zero) it follows that $x \not \in I^\perp_i$, for some $1\leq i \leq n$. Again a contradiction. This completes the proof. 
    
\item For an $\Gamma$-order ideal $I$, we know that $I^\perp$ is also a $\Gamma$-order ideal and by definition $I\cap I^\perp =\{0\}$. If $I$ is an essential ideal, $I$ has to intersect any nonzero order ideal. This forces $I^\perp=\{0\}$. 

\item If we have a direct sum $\bigoplus_{s\in S}I_s$ then, by definition, 
$I_i \cap \sum_{i\not = s}I_s =
\{ 0 \}$. In particular, $I_i \cap I_j =\{0\}$ for any $i,j \in S$, where $i\not = j$. For the converse, let $x \in I_i \cap \sum_{i\not = s}I_s$, for some $i\in S$. Using the refinement property, along with an argument similar to part (1) and the fact that the ideals are pairwise disjoint, we conclude that \( x = 0 \).\qedhere
\end{enumerate}
\end{proof}





Analogous to the algebraic setting (see~\cite[Theorem~6.1]{lam}), we prove the following result, which enables us to define the uniform dimension as in the classical case of rings.

\begin{thm}\label{Steinitzlemma}
Let $M$ be a conical refinement $\Gamma$-monoid. Let $I=\bigoplus_{k=1}^m I_k$ and $J=\bigoplus_{l=1}^n J_l$ be essential $\Gamma$-order ideals of $M$, where all $I_k$'s and $J_l$'s are uniform $\Gamma$-order ideals of $M$. Then $m=n$.
 \end{thm}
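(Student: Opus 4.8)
The plan is to reduce the statement to a single counting lemma and then apply it symmetrically. Concretely, I would isolate the following claim: \emph{if $I=\bigoplus_{k=1}^m I_k$ is essential with each $I_k$ uniform, and $V_1,\dots,V_p$ are pairwise incomparable nonzero $\Gamma$-order ideals of $M$, then $p\le m$.} Granting this, the theorem is immediate. Since $J=\bigoplus_{l=1}^n J_l$ is a direct sum, Proposition~\ref{finalbrushes}(3) makes $J_1,\dots,J_n$ pairwise disjoint (equivalently, pairwise incomparable) nonzero ideals, so the claim applied to $I$ yields $n\le m$; exchanging the roles of $I$ and $J$ (now using that $J$ is essential with uniform summands and that $I_1,\dots,I_m$ are pairwise incomparable) yields $m\le n$, whence $m=n$.

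To prove the claim I would establish two observations. First, every nonzero $\Gamma$-order ideal $V$ meets at least one summand $I_k$: by essentiality $V\cap I\neq\{0\}$, so I pick $0\neq x\in V\cap I$ and, since $I=I_1+\dots+I_m$, write $x=x_1+\dots+x_m$ with $x_k\in I_k$; conicality forces some $x_k\neq0$, and since $x_k\le x\in V$ and $V$ is an order ideal, $x_k\in V\cap I_k$. Thus the ``hit set'' $K_i:=\{k\mid V_i\cap I_k\neq\{0\}\}$ is nonempty for each $i$. Second, no summand can be hit by two of the $V_i$: if $k\in K_i\cap K_{i'}$ with $i\neq i'$, then $V_i\cap I_k$ and $V_{i'}\cap I_k$ are nonzero $\Gamma$-order ideals contained in the uniform ideal $I_k$, so uniformity gives $(V_i\cap I_k)\cap(V_{i'}\cap I_k)\neq\{0\}$; but this intersection lies in $V_i\cap V_{i'}$, which is $\{0\}$ because $V_i\parallel V_{i'}$. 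Hence $K_1,\dots,K_p$ are nonempty pairwise disjoint subsets of $\{1,\dots,m\}$, forcing $p\le m$.

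The genuine obstacle, compared with the module-theoretic argument of \cite[Theorem~6.1]{lam}, is the absence of projections onto the summands $I_k$. In a monoid this is circumvented by two order-theoretic features: every element $x$ of the sum $I_1+\dots+I_m$ is by definition a sum $x_1+\dots+x_m$ of components $x_k\in I_k$, and the downward closure of the order ideal $V$ then guarantees that each component $x_k\le x$ already lies in $V$, with conicality ensuring at least one is nonzero. This is precisely where the monoid order replaces linear-algebraic projection and makes the clean combinatorial counting available. The only remaining points needing care are routine bookkeeping: that intersections of $\Gamma$-order ideals are again $\Gamma$-order ideals (so that $V_i\cap I_k$ is admissible in the definition of uniformity) and that incomparability of ideals coincides with triviality of their intersection, as noted after Definition~\ref{uniformdef}.
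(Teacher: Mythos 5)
Your argument is correct, but it is genuinely different from the one in the paper. The paper follows the classical module-theoretic route of \cite[Theorem~6.1]{lam}: a Steinitz-type exchange in which the summands $I_k$ are replaced one at a time by suitable $J_l$'s, with the key technical burden being to show at each step that the new direct sum $J_1\oplus\cdots\oplus J_r\oplus I_{r+1}\oplus\cdots\oplus I_m$ is still essential (this in turn uses that an intersection with a uniform ideal is essential in it, and that direct sums of essential ideals are essential). Your proof instead establishes a single counting claim --- any family of pairwise incomparable nonzero $\Gamma$-order ideals has size at most $m$ --- via the ``hit sets'' $K_i$, and both steps check out: essentiality plus the existence of a decomposition $x=x_1+\cdots+x_m$ in the sum $I_1+\cdots+I_m$, together with downward closure of the order ideal $V_i$, produce a nonzero element of some $V_i\cap I_k$ (conicality is not even needed here, since a sum of zeros is zero); and uniformity of $I_k$ together with $V_i\cap V_{i'}=\{0\}$ (equivalent to $V_i\parallel V_{i'}$ for order ideals, as noted after Definition~\ref{uniformdef}) forces the $K_i$ to be disjoint. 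What your route buys is a shorter, more elementary proof that sidesteps the inductive maintenance of essentiality entirely, and as a bonus it directly yields $\udim(M)\le\udim'(M)$ (with the reverse inequality coming from the $I_k$'s themselves being pairwise incomparable), so it essentially absorbs the subsequent Corollary identifying $\udim$ with $\udim'$. What the paper's exchange argument buys is the intermediate statement that each hybrid sum in the chain is itself an essential decomposition into uniform ideals, which is of some independent interest but not needed for the equality $m=n$.
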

\begin{proof}
Suppose $n\geq m$. We first show, step by step, that 
\begin{align}\label{stepbystep1}
& J_1\oplus I_2 \oplus I_3 \oplus \cdots \oplus I_m,\\ 
& J_1\oplus J_2 \oplus I_3 \oplus  \cdots \oplus I_m,\notag\\
&\cdots\notag\\
&J_1\oplus J_2 \oplus J_3\oplus \cdots \oplus J_m,  \notag  
\end{align}
are all essential $\Gamma$-order ideals of $M$. Since $J=J_1\oplus J_2 \oplus J_3\oplus \cdots \oplus J_n$ is also  essential, it follows that $n=m$. 

Set $\hat I := I_2\oplus \cdots \oplus I_m$. There is an $1\leq l\leq n$ such that $\hat I\cap J_l  = \{ 0\}$. Indeed, suppose that for all $1\leq l\leq n$,  $\hat I\cap J_l  \not = \{ 0\}$. Then, $\hat I\cap J_l \subseteq_{e} J_l$ as the $J_l$'s are uniform. Since the direct sum of essential ideals is essential, we obtain that
\[(\hat I\cap J_1) \oplus (\hat I\cap J_2) \oplus \cdots \oplus (\hat I\cap J_n) \subseteq_e J_1 \oplus J_2 \oplus \cdots \oplus J_n=J.\]
Thus, 
$\hat I \cap J \subseteq_e J \subseteq_e M$. 
Hence, \(\hat{I} \subseteq_e M\), which is a contradiction since \(\hat{I} \cap I_1 = \{0\}\).
Thus, there is a $l$ such that $\hat I \cap J_l= \{0\}.$ Relabeling if necessary, we suppose $\hat I \cap J_1= \{0\}.$ Set \[I'=J_1\oplus \hat I =J_1\oplus I_2 \oplus I_3 \oplus \cdots \oplus I_m.\]
We will show that $I'$ is essential. Observe that $I' \cap I_1 \not = \{0\}$. Otherwise, we have $I'\oplus I_1=I_1\oplus I_2 \oplus I_3 \oplus \cdots \oplus I_m \oplus J_1= I\oplus J_1$, which can't be as $I$ is essential and $J_1\not = \{0\}.$
Consider 
\begin{equation}\label{imposter}
( I' \cap I_1)\oplus I_2 \oplus \cdots \oplus I_m \subseteq_e  I_1\oplus I_2 \oplus \cdots \oplus I_m \subseteq_e M.
\end{equation} 
 Since the left hand-side of (\ref{imposter}) is contained in $I'$, it follows that $I' \subseteq_e M$. Thus, we have shown that \( I' = J_1 \oplus I_2 \oplus I_3 \oplus \cdots \oplus I_m \) is essential. By repeating this argument step by step along the list in (\ref{stepbystep1}), the theorem follows.
\end{proof}


Theorem~\ref{Steinitzlemma} ensures that if an essential \(\Gamma\)-order ideal \( I \) of \( M \) decomposes as a direct sum of \( n \) uniform ideals, then any decomposition of an essential ideal of \( M \) must also contain exactly \( n \) uniform ideals. This observation allows us to define the uniform dimension in a manner analogous to the classical definition for rings and modules (see~\cite[Definition~6.2]{lam}).

\begin{deff}
 Let \( M \) be a conical refinement \(\Gamma\)-monoid. We define \(\udim'(M) = n\) if there exists an essential \(\Gamma\)-order ideal \( I \) that decomposes as a direct sum of \( n \) uniform \(\Gamma\)-order ideals. If no such \( n \) exists, we set \(\udim'(M) = \infty\).
\end{deff}

The following statements can be proved mutatis mutandis as in the case of modules~\cite[Propositions~6.3 and 6.4]{lam} and so we leave the proofs out. 

\begin{lemma}\label{gfyd74dj}
Let $M$ be a conical refinement $\Gamma$-monoid. 

\begin{enumerate}[\upshape(1)]

\item If $\udim'(M)=n <\infty$, then any direct sum of nonzero $\Gamma$-order ideals $I=I_1\oplus \cdots \oplus I_k\subseteq M$ has $k\leq n$.

\item We have $\udim'(M)=\infty$ if and only if $M$ contains an infinite direct sum of nonzero $\Gamma$-order ideals.

\end{enumerate}
\end{lemma}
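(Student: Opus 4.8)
The plan is to prove both parts by exhibiting $\udim'$ as the direct‑sum version of the incomparability invariant of Definition~\ref{defuniformdem}; the two engines are Proposition~\ref{finalbrushes}(3), which says that in a conical refinement monoid a family of $\Gamma$-order ideals is independent exactly when it is pairwise disjoint, together with the identity $I\cap J=\{0\}\iff I\parallel J$ and the Steinitz-type Theorem~\ref{Steinitzlemma}. The heart of part (1) is the following claim, proved by induction on $n$: if $U=U_1\oplus\cdots\oplus U_n$ is a direct sum of $n$ uniform $\Gamma$-order ideals, then $U$ contains no family of $n+1$ pairwise disjoint nonzero $\Gamma$-order ideals. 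The base case $n=1$ is just the definition of uniformity. For the inductive step, given pairwise disjoint nonzero $W_1,\dots,W_{n+1}\subseteq U_1\oplus\hat U$ with $\hat U=U_2\oplus\cdots\oplus U_n$, the intersections $W_j\cap U_1$ are pairwise disjoint subideals of the uniform ideal $U_1$, so at most one is nonzero; discarding at most one index leaves at least $n$ ideals with $W_j\cap U_1=\{0\}$. I then push these into $\hat U$ along the projection $\pi\colon U_1\oplus\hat U\to\hat U$ killing the $U_1$-component, which is a well-defined $\Gamma$-homomorphism since, $M$ being refinement, every element of $U$ decomposes uniquely. The main obstacle — the point where the module proof does \emph{not} transfer verbatim — is that $\pi$ need not be injective on $W_j$, so one cannot argue $W_j\cong\pi(W_j)$ as for modules. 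Instead I use downward-closure twice: first to see that $\pi(W_j)$ is again a $\Gamma$-order ideal (if $\hat x\le\pi(w)$ with $w=u_1+\pi(w)$, then $u_1+\hat x\le w$ lies in $W_j$ and projects to $\hat x$); and second to get disjointness, since if $0\ne c\in\pi(W_i)\cap\pi(W_j)$ one picks $w\in W_i$, $w'\in W_j$ with $\pi(w)=\pi(w')=c$, and then $c\le w$, $c\le w'$ force $c\in W_i\cap W_j=\{0\}$, a contradiction. Thus the $\pi(W_j)$ form at least $n$ pairwise disjoint nonzero ideals inside the direct sum $\hat U$ of $n-1$ uniforms, contradicting the inductive hypothesis.

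Part (1) now follows quickly: if $\udim'(M)=n$ with essential $U=\bigoplus_{i=1}^n U_i$, then for any direct sum $I_1\oplus\cdots\oplus I_k$ of nonzero ideals the intersections $I_j\cap U$ are nonzero (essentiality) and pairwise disjoint, hence form $k$ pairwise disjoint nonzero ideals inside $U$, so $k\le n$ by the claim. The backward implication of part (2) is then immediate, since a monoid of finite $\udim'$ admits no infinite pairwise disjoint — hence, by Proposition~\ref{finalbrushes}(3), no infinite direct — family.

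For the forward implication of part (2) I argue the contrapositive, assuming $M$ has no infinite direct sum. I first show every nonzero $\Gamma$-order ideal $N$ contains a uniform ideal: if not, $N$ is non-uniform, so it contains disjoint nonzero $A_1,B_1$; as $B_1$ likewise contains no uniform ideal it splits into disjoint nonzero $A_2,B_2$, and iterating yields $A_1,A_2,\dots$ with $A_{k+1},B_{k+1}\subseteq B_k$ and $A_k\cap B_k=\{0\}$, whence $A_j\subseteq B_i$ and $A_i\cap A_j\subseteq A_i\cap B_i=\{0\}$ for $i<j$, so $\{A_k\}$ is pairwise disjoint and, by Proposition~\ref{finalbrushes}(3), an infinite direct sum — a contradiction. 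I then build an essential finite decomposition greedily: start with a uniform $U_1$; if $U_1\oplus\cdots\oplus U_k$ is not essential, its orthogonal is nonzero and equals $\bigcap_i U_i^\perp$ by Proposition~\ref{finalbrushes}(1),(2), so I may choose a uniform $U_{k+1}$ inside it, disjoint from all previous ones. This cannot continue indefinitely (it would produce an infinite direct sum), so it halts at some $n$ for which $U_1\oplus\cdots\oplus U_n$ is essential, giving $\udim'(M)=n<\infty$. The only genuinely delicate step in the whole argument is the failure of injectivity of $\pi$, which I circumvent by the order-ideal and downward-closure manipulations in the first paragraph; everything else transfers from the module-theoretic template once Proposition~\ref{finalbrushes} and Theorem~\ref{Steinitzlemma} are in place.
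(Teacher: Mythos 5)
Your proof is correct. The paper itself gives no argument here, stating only that the lemma follows \emph{mutatis mutandis} from the module case (Lam, Propositions 6.3 and 6.4); your write-up is exactly that adaptation, and you correctly isolate and repair the single step that does not transfer verbatim --- the projection $\pi$ onto a direct complement need not be injective on a monoid order ideal --- by using downward closure to show that each $\pi(W_j)$ is still a nonzero $\Gamma$-order ideal and that the images remain pairwise disjoint.
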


\begin{cor}
    Let $M$ be a conincal refinement $\Gamma$-monoid. Then $\udim'(M)=\udim(M)$.
\end{cor}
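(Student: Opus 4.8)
The plan is to reduce everything to the already-established dictionary between incomparability and direct sums, and then split into the finite and infinite cases for $\udim'$. The genuine content was done in Theorem~\ref{Steinitzlemma} and Lemma~\ref{gfyd74dj}; what remains is to match the two definitions.

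First I would record the translation. By the remark following Definition~\ref{defuniformdem}, for nonzero $\Gamma$-order ideals the condition $I \parallel J$ is equivalent to $I \cap J = \{0\}$, and by Proposition~\ref{finalbrushes}(3) a finite family of nonzero $\Gamma$-order ideals is pairwise disjoint precisely when its sum is direct. Hence a collection of $k$ pairwise incomparable nonzero $\Gamma$-order ideals is the same datum as a direct sum $\bigoplus_{i=1}^k I_i$ of $k$ nonzero $\Gamma$-order ideals, and consequently
\[
\udim(M) = \sup\{k \mid M \text{ contains a direct sum of } k \text{ nonzero } \Gamma\text{-order ideals}\}.
\]

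Next, suppose $\udim'(M) = n < \infty$. On one hand, the witnessing essential ideal $I = \bigoplus_{k=1}^n I_k$ is itself a direct sum of $n$ nonzero $\Gamma$-order ideals, so $\udim(M) \geq n$. On the other hand, Lemma~\ref{gfyd74dj}(1) bounds every direct sum of nonzero $\Gamma$-order ideals by $n$ summands, so by the translation above every family of pairwise incomparable nonzero ideals has at most $n$ members, giving $\udim(M) \leq n$; hence $\udim(M) = n$. If instead $\udim'(M) = \infty$, then Lemma~\ref{gfyd74dj}(2) supplies an infinite direct sum of nonzero $\Gamma$-order ideals, and truncating it to its first $k$ terms produces, for every $k$, a direct sum of $k$ nonzero ideals; thus $\udim(M) = \infty$. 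In both cases $\udim(M) = \udim'(M)$.

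The steps here are all bookkeeping on top of the two cited results: Theorem~\ref{Steinitzlemma} guarantees that $\udim'$ is well-defined, being independent of the chosen essential decomposition into uniform ideals, while Lemma~\ref{gfyd74dj} does the real comparison work. The only point requiring care is the infinite case, where one must invoke Lemma~\ref{gfyd74dj}(2) rather than merely exhibiting arbitrarily large finite direct sums; but since that lemma already characterizes $\udim'(M) = \infty$ by the existence of an infinite direct sum, no additional argument is needed, and the case split is exhaustive because $\udim'(M)$ is either a finite integer or $\infty$ by definition.
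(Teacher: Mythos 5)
Your proof is correct and follows essentially the same route as the paper: both rewrite $\udim(M)$ in terms of direct sums via Proposition~\ref{finalbrushes}(3) and then apply Lemma~\ref{gfyd74dj} to both the finite and infinite cases. The only cosmetic difference is that you split cases on $\udim'(M)$ while the paper splits on $\udim(M)$; your organization is, if anything, slightly cleaner in the infinite case, where truncating the infinite direct sum from Lemma~\ref{gfyd74dj}(2) directly yields $\udim(M)=\infty$.
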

\begin{proof}
By Proposition~\ref{finalbrushes}(3), the definition of uniform dimension (Definition~\ref{defuniformdem}) takes the form
\[
\udim(M)=\sup\{k \mid M \text{ contains a direct sum of $k$ nonzero $\Gamma$-order ideals}\}.
\]
If $\udim(M)=\infty$, then by Lemma~\ref{gfyd74dj}(2), $\udim'(M)=\infty$. If  $\udim(M)<\infty$, again Lemma~\ref{gfyd74dj}(2) implies that $\udim'(M)<\infty$. Now, Lemma~\ref{gfyd74dj}(1) implies that 
 $\udim(M)= \udim'(M)$.
 \end{proof}

\section{The uniform dimension and regular ideals of a talented monoid}\label{graphsdeffff}

A graph $E := (E^0, E^1, r, s)$ consists of a countable set of vertices $E^0$, a countable set of edges $E^1$, and maps $r\colon E^1 \to E^0$ and $s:E^1 \to E^0$ identifying the range and source of each edge. A graph $E$ is \emph{row-finite} if $s^{-1}(v)$ is finite for each vertex $v$. Throughout this paper we only consider row-finite graphs. We say a vertex $v$ is \emph{regular} if 
$s^{-1}(v)\not= \varnothing$ (otherwise $v$ is called a \emph{sink}) and we denote the set of all regular vertices by $E^0_\reg$.
If $E$ is a graph, then a \emph{nontrivial path} is a sequence $\alpha := e_1 e_2 \ldots e_n$ of edges with $r(e_i) = s(e_{i+1})$ for $1 \leq i \leq n-1$.  We say the path $\alpha$ has \emph{length} $| \alpha| :=n$, and we let $E^n$ denote the set of paths of length $n$.  We consider the vertices in $E^0$ to be trivial paths of length zero.  We also let $E^{*} := \bigcup_{n=0}^\infty E^n$ denote the paths of finite length. 
The source and range maps of \(E^1\) are extended naturally to \(E^{*}\).
Let $H \subseteq E^0$. The set $H$ is \emph{hereditary} if whenever $e\in E^{1}$ satisfies
$s(e)\in H$, then $r(e)\in H$.
The set $H$ is \emph{saturated} if  for all $v\in E_\reg^0$, $r(s^{-1}(v))  \subseteq H$ implies $v\in H$. 

For a hereditary subset $H\subseteq E^0$, we say a path $\alpha=e_1e_2\cdots e_n$ \emph{flows} to $H$ if 
$s(e_n)\not \in H$ but $r(e_n)\in H$. The hereditary condition implies that for $1\leq i \leq n$, $s(e_i)\not \in H$. We recall (\cite[Lemma~2.0.7]{lpabook}) that for a hereditary subset $H$, setting  $H_0:=H$,  \emph{the hereditary and saturated closure}, denoted by $\overline H$ is the set 
\begin{equation}\label{herclos1}
    \overline H =\bigcup_{n\geq 1} H_n, \text{ where, }
    H_n = H_{n-1} \cup \{v\in E^0 \mid r(s^{-1}(v))\subseteq H_{n-1} \}.
\end{equation}

Given vertices $u$ and $v$, we write $u\geq v$ if there is a finite path starting at $u$ and ending at $v$, i.e., a path with source $u$ and range $v$. Let \(w\) be a vertex of the graph \(E\). The \emph{tree rooted at \(w\)} is defined as
\[T(w):=\big \{r (\alpha)\mid\alpha\in E^{*},s(\alpha)=w\big \}=\left\{v\in E^0\mid w\geq v\right\}.\]
For a subset $X \subset E^0$, we define 
\[T(X):=\bigcup_{w\in X} T(w).
\]

A path $\mathfrak{c}=e_1e_2\dots e_n$ is called a \emph{closed path based at $v$} if $v=s(\mathfrak{c})=r(\mathfrak{c})$. A \emph{cycle} in $E$ is a closed path $\mathfrak{c}=e_1e_2\dots e_n$ such that $s(e_i)\not = s(e_j)$ for all $i\not = j$.  An \emph{exit} of a cycle $\mathfrak{c}=e_1\cdots e_n$ consists of an edge $f$ such that $s(f)=s(e_i)$ for some $i$ but $f\neq e_i$. The vertices $s(e_1),\ldots,s(e_n)$ are called the \emph{vertices of $\mathfrak{c}$}, and the set of these vertices is denoted by $\mathfrak{c}^0$, that is, $\mathfrak{c}^0=\{s(e_1),\ldots,s(e_n)\}$.  If $f$ is an exit of the cycle $\mathfrak{c}$, then a \emph{return} of $f$ to $\mathfrak{c}$ is a path $\mu$ such that $s(\mu)=r(f)$ and $r(\mu)\in \mathfrak{c}^0$. 

One can define a pre-ordering  on the set of sinks  and cycles of a graph $E$.  We say that a cycle $\mathfrak{c}$ \emph{connects} to a sink $z$  if $z\in T(\mathfrak{c})$, i.e., if there is a path from $\mathfrak{c}$ to $z$.  Similarly, a cycle $\mathfrak{c}$ \emph{connects} to a cycle $\mathfrak{d}$, if $T(\mathfrak{c}) \cap \mathfrak{d}^0 \not = \varnothing$, i.e., there is a path from $\mathfrak{c}$ to $\mathfrak{d}$.  This defines a pre-ordering on the set of cycles and sinks. Furthermore, this pre-ordering is a partial order if and only if the cycles in $E$ are mutually disjoint.  With this pre-ordering, a cycle $\mathfrak{c}$ is maximal if no other cycle connects to $\mathfrak{c}$ (in particular, a maximal cycle is disjoint from all other cycles). A sink $z$ is maximal if there is no cycle $\mathfrak{c}$ which connects to $z$.

If a cycle $\mathfrak{c}$ does/does not have an exit, then we say $\mathfrak{c}$ is \emph{cycle with/without exit}.  A graph is said to satisfy \emph{Condition~(L)} if every cycle has an exit. An \emph{extreme cycle} is a cycle that admits an exit, and such that every finite path that exits from it admits a return to it. We say that a cycle is a \emph{cycle with no return exit} if the cycle has an exit, however no exit returns to the cycle. 

The following Lemma, which is interesting in its own right, will be used in Theorem~\ref{decomtal} to decompose the graph monoids. 

\begin{lemma}\label{gfhfhfh22}
    Let $E$ be a row-finite graph and $H$ a hereditary subset of $E$. Suppose $v\in E^0 \, \backslash\,  H$. Then the following are equivalent:

    \begin{enumerate}[\upshape(1)]
\item $v \in \overline H \, \backslash\, H$;

\item there is a nonzero but finite number of paths that flow from each $w \in T(v)\, \backslash \, H$ to $H$;


\end{enumerate}
\end{lemma}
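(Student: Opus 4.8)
The plan is to prove the two implications separately, each by an induction adapted to a natural well-founded measure. Throughout, for a vertex $w\notin H$ let me write $P(w)$ for the set of paths that flow from $w$ to $H$ and $N(w)=|P(w)|$ for their number; since a flowing path $e_1\cdots e_n$ satisfies $s(e_i)\notin H$ for all $i$ (as noted right after the definition of ``flows''), it stays outside $H$ until its final vertex $r(e_n)\in H$, so $P(w)$ is exactly the set of first-passage paths from $w$ into $H$. Two structural facts will be used repeatedly. First, since $\overline H$ is hereditary and, in the situation of (1), $v\in\overline H$, we have $T(v)\subseteq\overline H$; thus in either direction it suffices to understand the vertices of $T(v)\setminus H$, all of which lie in $\overline H\setminus H$ under hypothesis (1). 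Second, the saturation step in the closure (\ref{herclos1}) only promotes regular vertices (saturation is defined via $E^0_\reg$), so every element of $\overline H\setminus H$ is regular; conversely, if $N(w)>0$ then $w$ cannot be a sink, so any $w\in T(v)\setminus H$ satisfying (2) is regular with $s^{-1}(w)\neq\varnothing$.

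For (1)$\Rightarrow$(2), I would prove the stronger statement that $0<N(w)<\infty$ for every $w\in\overline H\setminus H$, by strong induction on the closure depth $d(w):=\min\{n\mid w\in H_n\}$. The key recursion comes from splitting a flow path according to its first edge: for a regular $w\in\overline H\setminus H$ with $d(w)=n$, the defining property $r(s^{-1}(w))\subseteq H_{n-1}$ forces every out-edge $e$ to satisfy $r(e)\in H_{n-1}$, so $e$ either lands in $H$ (contributing the single-edge flow path $e$) or lands in $H_{n-1}\setminus H$ at strictly smaller depth (contributing the paths $e\beta$ with $\beta\in P(r(e))$). This yields $N(w)=\sum_{e\in s^{-1}(w)}\big(N(r(e))\text{ or }1\big)$, a finite sum by row-finiteness of finite, positive terms by the inductive hypothesis, hence itself finite and positive. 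As $T(v)\setminus H\subseteq\overline H\setminus H$, this gives (2).

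For (2)$\Rightarrow$(1), I would run the induction in the ``opposite direction,'' using as measure the maximal flow-path length $L(w):=\max\{|\alpha|\mid\alpha\in P(w)\}$, which is well-defined because $N(w)>0$ and finite because $N(w)<\infty$. The claim is that $w\in\overline H$ for every $w\in T(v)\setminus H$, proved by induction on $L(w)$. Each such $w$ is regular, and for every out-edge $e$ either $r(e)\in H\subseteq\overline H$, or else $r(e)\in T(v)\setminus H$, in which case prepending $e$ to a longest flow path out of $r(e)$ shows $L(r(e))\leq L(w)-1<L(w)$, so the inductive hypothesis gives $r(e)\in\overline H$. Hence $r(s^{-1}(w))\subseteq\overline H$, and since $\overline H$ is saturated and $w$ is regular, $w\in\overline H$. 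Applying this to $w=v$ (note $v\in T(v)\setminus H$) yields $v\in\overline H$, and as $v\notin H$ by hypothesis we conclude $v\in\overline H\setminus H$.

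The main obstacle is organizational rather than computational: identifying the correct well-founded measure for each direction and verifying that it genuinely decreases along the recursion. In (1)$\Rightarrow$(2) the measure is the closure depth, and finiteness is automatic once one knows that out-edges strictly drop the depth; the point needing care is that $N(w)\neq 0$, which is exactly where regularity of the vertices of $\overline H\setminus H$ (saturation promotes only regular vertices) is essential — otherwise a sink could sit in $\overline H\setminus H$ and (2) would fail. In (2)$\Rightarrow$(1) the subtle point is that \emph{both} halves of hypothesis (2) are needed: $N(w)>0$ guarantees that $w$ is regular and that $L(w)$ exists, while $N(w)<\infty$ is precisely what forbids a cycle outside $H$ from which $H$ is reachable — such a configuration would produce infinitely many flow paths by looping — thereby ensuring $L(w)$ is finite so that the induction terminates.
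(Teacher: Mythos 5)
Your proof is correct. The forward direction (1)$\Rightarrow$(2) is essentially the paper's argument: both induct on the closure depth $\min\{n \mid w\in H_n\}$ and exploit the recursion obtained by splitting a flow path according to its first edge, with row-finiteness giving finiteness and the regularity of every vertex of $\overline{H}\setminus H$ giving positivity. The reverse direction is where you genuinely diverge. The paper inducts on the \emph{number} of flow paths from $w$: the base case (a unique flow path) forces the absence of bifurcations along that path and one then walks backward along it, while the inductive step locates the first bifurcation and argues that each branch carries strictly fewer flow paths. You instead induct on the \emph{maximal length} $L(w)$ of a flow path, which exists because the set of flow paths is nonempty and is finite because that set is finite. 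Your measure makes the descent automatic --- prepending an out-edge strictly increases length, so $L$ strictly drops along every out-edge not landing in $H$ --- and lets you apply saturation once at $w$ rather than walking backward along a path; it also cleanly separates the roles of the two halves of hypothesis (2) (positivity gives regularity and the existence of $L$; finiteness gives the finiteness of $L$, ruling out cycles outside $H$ from which $H$ is reachable). The paper's count-based induction requires the slightly more delicate verification that branching strictly reduces the count on each branch; your length-based induction trades that for an immediate observation. Both are valid, and yours arguably involves lighter bookkeeping.
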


\begin{proof}
    (1) $\rightarrow$ (2) Suppose $v\in  \overline H \, \backslash\, H$. We first show that there is a path from any $w \in T(v)\, \backslash \, H$ which flows to $H$. Since $w\in T(v)$, there is a path $\alpha$ with $s(\alpha)=v$ and $r(\alpha)=w$. Since $v\in \overline H$, it follows that $w\in \overline H$. By (\ref{herclos1}), there is an $n\in \mathbb N$ such that $w\in H_n$. Let $n$ be the smallest positive number such that $w\in H_n \, \backslash \, H_{n-1}$. We argue by induction. For $n=1$, i.e., $w\in H_1\, \backslash \, H$, we have $r(s^{-1}(w)) \subseteq H$. Thus, there exists an edge $e$ which flows from $w$ to $H$. Furthermore, if there is a path from $w$ which flows to $H$, it has to start with one of the edges $r(s^{-1}(w))$ which already flows to $H$, so the path has to be one of these edges. Thus, there is a nonzero but finite number of paths from each \( w \in H_1 \) that flow to \( H \).
    
    Let $w\in H_{n+1}\, \backslash \, H_n$ and suppose that the claim is valid for any value less or equal to $n$. By definition, $r(s^{-1}(w)) \subseteq H_{n}$. There is an edge $e$, with $w=s(e)$ and $r(e)\in H_n\backslash H_{n-1}$, otherwise $w\in H_{n}$ which is not the case. By the induction step, there is a path $\alpha$ with source $r(e)$ which flows to $H$. Thus $e\alpha$ flows from $w$ to $H$. On the other hand, any path starting from $w$ first lands in $H_{n}$ via the finite number of edges $s^{-1}(w)$ and from $H_n$ there are only finite number of paths flows from $r(s^{-1}(w))$ to $H$. Therefore, there is a finite number of paths from \( w \) that flow to \( H \). 

    (2) $\rightarrow$ (1) If \( w \in T(v) \setminus H \), then by assumption, there is a nonzero but finite number of paths \( \alpha \) with \( s(\alpha) = w \) that flow to \( H \). We prove that $w\in \overline H$, by induction on the number of paths flowing to $H$.  
    
    First, suppose that there is a unique path $\alpha=e_1\cdots e_n$, where $n\geq 1$, with $s(\alpha)=w$ which flows to $H$. By definition of flows, $r(e_n)\in H$ but $s(e_n)\not \in H$. We claim there is no other edge that emits from $s(e_n)$. For, if there was another edge $f$ emitting from $s(e_n)$, then either $r(f)\in H$, or $r(f) \in T(w) \, \backslash \, H$. Either way, by the assumption,  there will be another path from $w$  which flows to $H$. This contradicts that $\alpha$ is the only path which flows from $w$ to $H$. Thus, $e_n$ is a unique edge emitting from $s(e_n)$ with $r(e_n)\in H$.  It follows that $s(e_n) \in \overline H$. We now work backward along the paths $\alpha=e_1\dots e_{n-1} e_n$. A similar argument shows that there is a unique edge emitting from $s(e_{n-1})$, with $r(e_{n-1})\in \overline H$. So $s(e_{n-1}) \in \overline H$. Repeating this process $n$ times we obtain $w\in \overline H$.
    
Now, for the induction hypothesis, suppose that if there are at most \( n \) paths from \( w \in T(v) \setminus H \) flowing into \( H \) then  \( w \in \overline{H} \).  Next, suppose there are $n+1$ paths which flows from $w$ to $H$. Let $ \alpha =e_1 e_2 \dots e_k$ be a path from $w$ flowing to $H$. Let $1\leq m\leq k$ be a minimal natural number such that there is an edge $g\not = e_m$ with $s(g)=s(e_m)$ (i.e., there is a bifurcation in the graph). Since $r(g)\in T(v)$, if $r(g)\not \in H$, then there are at most $n$ paths from $r(g)$ which flows to $H$. By the induction assumption, it follows that $r(g)\in \overline H$. Set $z:= s(e_m)$. We just showed that $r(s^{-1}(z))\subseteq \overline H$. It follows that $z=s(e_m)\in \overline H$. Now, there is a unique path $e_1\cdots e_m$ from $w$ which flows to $\overline H$. Arguing as in the first part, backward along the paths, we obtain that $w\in \overline H$.  

Since clearly $v\in T(v)\, \backslash \, H$, we obtain that $v\in \overline H$, as desired.
\end{proof}

Given a row-finite graph $E$, we denote by $F_E$ the free commutative monoid generated by $E^0$. We recall the notion of a graph monoid~\cite{lpabook,Pardo} introduced by Ara--Moreno--Pardo in relation with the $K_0$-group of the Leavitt path algebra of $E$, and its ``time evolution model'', i.e., the talented monoid of $E$
which was first introduced in this form in~\cite{hazli} and further studied in~\cite{Luiz}.

\begin{deff}\label{def:graphmonoid}
    Let $E$ be a row-finite graph. The \emph{graph monoid} of $E$, denoted $M_E$, is the commutative 
monoid generated by $\{v \mid v\in E^0\}$, subject to
\[v=\sum_{e\in s^{-1}(v)}r(e)\]
for every $v\in E^0_\reg$.
\end{deff}

The relations defining $M_E$ can be described more concretely: First, define a relation $\to_1$ on $F_E\backslash \{0\}$ as follows: for $\sum_{i=1}^n v_i  \in F$, and a regular vertex $v_j\in E^0$, where $1\leq j \leq n$,  
\[\sum_{i=1}^n v_i \to_1 \sum_{i\not = j }^n v_i+  \sum_{e\in s^{-1}(v_j)}r(e).\]
Then $M_E$ is the quotient of $F_E$ by the congruence generated by $\to_1$.

Let $\to$ be the reflexive and  transitive closure of $\to_1$ on $F_E\backslash \{0\}$ and $\sim$ be the congruence on $F_E$ generated by the relation $\to_1$.  The following lemma (\cite[Lemmas 4.2 and 4.3]{Pardo}) is essential to the remainder of this paper, as it allows us to translate the relations in the definition of $M_E$ in terms of the simpler relation $\to$ in $F_E$.

\begin{lemma}\label{confuu}
    Let $E$ be a row-finite graph.
    \begin{enumerate}[\upshape(1)]
        \item
            \textup{(The Confluence Lemma)} If $a,b\in F_E\setminus\left\{0\right\}$, then $a=b$ in $M_E$ if and only if there exists $c\in F_E$ such that $a\to c$ and $b\to c$. \textup{(}Note that, in this case, $a=b=c$ in $M_E$.\textup{)}
            
        \item
            If $a=a_1+a_2$ and $a\to b$ in $F_E$, then there exist $b_1,b_2\in F_E$ such that $b=b_1+b_2$, and  $a_1\to b_1$ and $a_2\to b_2$.
    \end{enumerate}
\end{lemma}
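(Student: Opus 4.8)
The plan is to deduce both parts from a single one-step \emph{diamond} property of the rewriting relation $\to_1$ on $F_E\setminus\{0\}$, together with standard abstract-rewriting arguments; throughout, for a regular vertex $v$ I write $R_v:=\sum_{e\in s^{-1}(v)}r(e)$, so that a single step $c\to_1 c'$ means $c=\hat c+v$ and $c'=\hat c+R_v$ for some $\hat c\in F_E$ and some regular $v$.

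First I would prove part (2) by induction on the number of $\to_1$-steps witnessing $a\to b$. If there are no steps, then $a=b$ and one takes $b_1=a_1$, $b_2=a_2$. For a single step $a\to_1 b$, the rewritten generator $v$ satisfies $v\le a_1$ or $v\le a_2$; assuming the former and writing $a_1=a_1'+v$, one sets $b_1:=a_1'+R_v$ and $b_2:=a_2$, so that $a_1\to_1 b_1$, $a_2\to b_2$, and $b_1+b_2=b$. The inductive step factors $a\to a'\to_1 b$, applies the hypothesis to $a\to a'$ to split $a'=a_1'+a_2'$, then applies the one-step case to $a'\to_1 b$ and composes. This is routine multiset bookkeeping.

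The heart of part (1) is the following one-step claim: if $a\to_1 b$ and $a\to_1 d$, then either $b=d$ or there is an element $c$ with $b\to_1 c$ and $d\to_1 c$. Indeed, the two steps rewrite generators $v$ and $w$ of $a$. If $v=w$, commutativity of $F_E$ forces $b=d$. If $v\ne w$, then $a=\tilde a+v+w$, and rewriting $w$ in $b=\tilde a+w+R_v$ and $v$ in $d=\tilde a+v+R_w$ both yield $c:=\tilde a+R_v+R_w$; thus $b\to_1 c$ and $d\to_1 c$. This is precisely the diamond property for the reflexive closure of $\to_1$.

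Finally I would pass from this local property to the Confluence Lemma. Note that $\to$ is \emph{not} Noetherian (a loop at $v$ gives $v\to_1 v$), so Newman's lemma is unavailable; however, the diamond property above passes to the reflexive--transitive closure $\to$ by the standard double induction on the two path lengths, so $\to$ is confluent. Moreover, since $\to_1$ is compatible with addition (adding a fixed element to both sides of a step is again a step), the congruence $\sim$ coincides with the equivalence relation generated by $\to_1$, equivalently by $\to$. Confluence of $\to$ then gives the Church--Rosser property: $a\sim b$ if and only if $a$ and $b$ admit a common descendant $c$ (the forward direction by induction on the length of a zigzag $a\leftrightarrow\cdots\leftrightarrow b$, closing each peak by confluence; the converse because $a\to c$ and $b\to c$ immediately give $a\sim c\sim b$). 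In particular $a=c=b$ in $M_E$. The main obstacle is exactly the non-termination of $\to$: one cannot invoke Newman's lemma and must instead secure confluence directly from the one-step diamond property, for which the free-commutative (multiset) structure of $F_E$ is what makes the two rewrites commute.
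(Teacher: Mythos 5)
Your proof is correct; the paper itself does not prove this lemma but quotes it from Ara--Moreno--Pardo \cite[Lemmas 4.2 and 4.3]{Pardo}, where the argument is essentially the one you give: the one-step diamond property for $\to_1$ on the free commutative monoid, lifted to the reflexive--transitive closure by the standard tiling induction and combined with the compatibility of $\to_1$ with addition to obtain the Church--Rosser form of the statement. Your remark that Newman's lemma is unavailable because $\to$ is not Noetherian (any cycle in $E$ gives an infinite reduction), so that the strong one-step diamond property must be secured directly from the multiset structure of $F_E$, is exactly the key point.
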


We are in a position to define the notion of the talented monoid of a directed graph. 
 
\begin{deff}\label{talentedmon}
    Let $E$ be a row-finite directed graph. The \emph{talented monoid} of $E$, denoted $T_E$, is the commutative 
    monoid generated by $\{v(i) \mid v\in E^0, i\in \mathbb Z\}$, subject to
    \[v(i)=\sum_{e\in s^{-1}(v)}r(e)(i+1),\]
    for every $i \in \mathbb Z$ and every $v\in E^{0}_\reg$. The additive group $\mathbb{Z}$ of integers acts on $T_E$ via monoid automorphisms by shifting indices: For each $n,i\in\mathbb{Z}$ and $v\in E^0$, define ${}^n v(i)=v(i+n)$, which extends to an action of $\mathbb{Z}$ on $T_E$. Throughout the paper, we denote the elements $v(0)$ in $T_E$ by $v$. 
\end{deff}

The talented monoid of a graph can also be seen as a special case of a graph monoid, which we now describe. The \emph{covering graph} of $E$ is the graph $\overline{E}$ with vertex set $\overline{E}^0=E^0\times\mathbb{Z}$, and edge set $\overline{E}^1=E^1\times\mathbb{Z}$. The range and source maps are given as
\[s(e,i)=(s(e),i),\qquad r(e,i)=(r(e),i+1).\] Note that the graph monoid $M_{\overline E}$ has a natural $\mathbb Z$-action given by ${}^n (v,i)= (v,i+n)$. 

The correspondence
    \begin{align*}
        T_{E} &\longrightarrow M_{\overline{E}}\\
        v(i) &\longmapsto (v,i)
    \end{align*}
    induces a $\mathbb Z$-monoid isomorphism. This  allows us to use the Confluence Lemma~\ref{confuu} for the talented monoid $T_E$ by identifying it with $M_{\overline E}$.

Let $E=(E^0,E^1,r,s)$ be a graph and $H$ be a hereditary  subset of $E^0$. We define the graph associated to $H$ as the graph $E_H=(E^0_H,E^1_H,r,s)$, where $E^0_H=H$ and $E^1_H=\{e\in E^1\mid s(e)\in H\}$. Thus for a hereditary subset $H$, one can also consider the $\mathbb Z$-monoid $T_{E_H}$, which we also denote by $T_H$.

There is a bijection between the lattice of hereditary saturated subsets of $E$, denoted by $\mathcal L(E)$, and the lattice of  $\mathbb{Z}$-order ideals of $T_E$, denoted by $\mathcal L(T_E)$, given by 
\begin{align}\label{corpara}
\mathcal L(E) &\longrightarrow \mathcal L (T_E)\\
H &\longmapsto \langle H\rangle, \notag
    \end{align}
where $\langle H\rangle$ is the $\mathbb Z$-order ideal of $T_E$ generated by the set $H$. 

 Note that $\langle H\rangle$ is naturally isomorphic as a $\mathbb{Z}$-monoid to $T_{H}$. Namely, there is an injection $T_H\rightarrow T_E; v\mapsto v$, with the image $\langle H \rangle$. This serves as an indication that working with the talented monoid $T_E$ is considerably more manageable than working directly with the Leavitt path algebra $L_{\mathsf k}(E)$ (see Remarks~\ref{parastoo}).
 
  Throughout we also use the fact that, for a hereditary saturated subset $H$ of $E^0$, and the corresponding $\mathbb Z$-order ideal $I=\langle H \rangle \subseteq T_E$, we have a $\mathbb Z$-module isomorphism 
\[T_{E/H}\cong T_{E}/I.\] We refer the reader to \cite{hazli} for these facts and the background concepts.

We say a hereditary subset $H$ is \emph{cofinal} if every vertex of $E$ is connected to some element of $H$, i.e., $E=R(H)$, where
\begin{equation}\label{gfhfyr667}
    R(H)=\{u\in E^0\mid  u\geq v, \text{ for some } v\in H\}.
    \end{equation}
We say a hereditary and saturated subset $H$ is \emph{connected}, if any two hereditary and saturated subsets $K, L\subseteq H$ nontrivially intersect.

\begin{prop}\cite[Proposition 6.8]{Luiz}\label{prop:Hcofinaliffzorderidealessential}
    Let $E$ be a row-finite graph and $H\subseteq E^0$ a hereditary subset. Then the $\mathbb{Z}$-order ideal $\langle H\rangle$ generated by $H$ in $T_E$ is essential if and only if $H$ is cofinal in $E$.
\end{prop}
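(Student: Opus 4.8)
The plan is to use the lattice correspondence (\ref{corpara}) to reduce essentiality to a statement about hereditary saturated subsets, and then to translate both the cofinality hypothesis and the disjointness of order ideals into reachability statements inside the graph $E$. Recall that under (\ref{corpara}) the nonzero $\mathbb{Z}$-order ideals of $T_E$ are exactly the $\langle K\rangle$ with $K$ a nonempty hereditary saturated subset, and that for a vertex $v$ one has $v\in\langle K\rangle$ if and only if $v\in\overline K$; equivalently, for hereditary saturated $A,B$ one has $\langle A\rangle\cap\langle B\rangle=\langle A\cap B\rangle$. Throughout I will use freely that every nonzero element of $T_E\cong M_{\overline E}$ lies above some vertex $v(i)$ in the algebraic order, so that every nonzero $\mathbb{Z}$-order ideal contains a generator $v=v(0)$.

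For the direction that cofinality implies essentiality, I would take an arbitrary nonzero $\mathbb{Z}$-order ideal $J$ and extract a vertex $w\in J$ as above. Cofinality of $H$ gives $w\in R(H)$, i.e.\ a path $\alpha=e_1\cdots e_n$ with $s(\alpha)=w$ and $v:=r(\alpha)\in H$. The key computation is that such a path forces $v(n)\leq w$ in $T_E$: expanding $w=w(0)$ by the defining relation at each regular vertex $s(e_1),\dots,s(e_n)$ and discarding the other summands yields $r(e_1)(1)\leq w$, then $r(e_2)(2)\leq r(e_1)(1)$, and so on down to $v(n)=r(e_n)(n)\leq w$. Since $J$ is an order ideal, $v(n)\in J$; since $v\in H$ and $\langle H\rangle$ is $\mathbb{Z}$-invariant, $v(n)={}^n v\in\langle H\rangle$; and $v(n)\neq 0$ because $T_E$ is conical with nonzero generators. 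Hence $0\neq v(n)\in\langle H\rangle\cap J$, proving $\langle H\rangle\subseteq_e T_E$.

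For the converse I would argue contrapositively. If $H$ is not cofinal then $R(H)\neq E^0$, so $K:=E^0\setminus R(H)$ is nonempty. A short check shows $K$ is hereditary (if $u\not\geq H$ then no range of an edge out of $u$ can reach $H$) and saturated (if $u\in E^0_\reg$ reaches $H$, some edge out of $u$ already has its range in $R(H)$, so $r(s^{-1}(u))\not\subseteq K$). Thus $\langle K\rangle$ is a nonzero $\mathbb{Z}$-order ideal. Finally, an induction along the closure (\ref{herclos1}) shows $\overline H\subseteq R(H)$: the generators lie in $R(H)$, and any regular vertex added at a saturation step reaches $R(H)$ through any of its (nonempty set of) outgoing edges. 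Consequently $\overline H\cap K=\varnothing$, and therefore $\langle H\rangle\cap\langle K\rangle=\langle\overline H\rangle\cap\langle K\rangle=\langle\overline H\cap K\rangle=\{0\}$, contradicting essentiality of $\langle H\rangle$. Hence essentiality forces cofinality.

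I expect the main obstacle to be the dictionary between the algebraic order on $T_E$ and reachability in $E$ — concretely, the inequality $v(n)\leq w$ produced by a length-$n$ path, which requires carefully iterating the defining relations and checking that each intermediate vertex $s(e_i)$ is regular so that the relation applies. The remaining ingredients (that $K$ is hereditary saturated and that $\overline H\subseteq R(H)$) are routine manipulations with hereditary and saturated sets, and the reduction to vertices relies only on conicality of $T_E$ and the identification $T_E\cong M_{\overline E}$.
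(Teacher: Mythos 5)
The paper does not actually prove this proposition: it is imported verbatim from \cite[Proposition 6.8]{Luiz}, so there is no internal argument to compare against. Your proof is correct and self-contained, and it takes the natural route: reduce to vertices (every nonzero element of $T_E$ dominates some nonzero $v(i)$, and $\mathbb{Z}$-invariance lets you normalise to $v(0)$ inside any nonzero $\mathbb{Z}$-order ideal), convert a path $e_1\cdots e_n$ from $w$ into the inequality $r(e_n)(n)\leq w$ by iterating the defining relations at the vertices $s(e_i)$ (each of which is regular precisely because it emits $e_i$), and, for the converse, observe that $K=E^0\setminus R(H)$ is a nonempty hereditary saturated set disjoint from $\overline H$, so $\langle K\rangle$ witnesses non-essentiality. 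Two small points are worth making explicit. First, in the saturation check for $K$ you should also cover the case $u\in H$: there the hereditariness of $H$ together with regularity of $u$ already puts $r(s^{-1}(u))$ inside $R(H)$. Second, the identity $\langle A\rangle\cap\langle B\rangle=\langle A\cap B\rangle$ for hereditary saturated $A,B$ is exactly the assertion that the bijection (\ref{corpara}) preserves meets; that is how the paper intends the correspondence to be read, but it deserves a sentence. Finally, note that the converse direction also falls out of machinery developed later in the paper: Proposition~\ref{marabeboos} gives $\langle H\rangle^\perp=\langle H^\perp\rangle$ with $H^\perp=E^0\setminus R(H)$, and Proposition~\ref{finalbrushes}(2) says essential ideals have trivial orthogonal, so non-cofinality immediately contradicts essentiality; your $K$ is precisely this $H^\perp$, constructed without the detour through orthogonality.
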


We are in a position to characterize talented monoids with uniform dimension $n$. As the theorem below shows, when \( \udim(T_E) = n \), the graph \( E \) branches into \( n \) strands, which do not branch further (see Example~\ref{uniformexam}).

\begin{thm}\label{home20}
Let $E$ be a finite graph and $T_E$  its talented monoid. Then, $\udim(T_E)=n$ if and only if  there are $n$ pairwise disjoint connected hereditary and saturated  subsets  $H_1, H_2, \dots, H_n$ such that $\bigcup_{i=1}^n H_i$ is cofinal.
\end{thm}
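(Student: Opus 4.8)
The plan is to translate the statement entirely through the lattice isomorphism \eqref{corpara} between the hereditary saturated subsets $\mathcal{L}(E)$ and the $\mathbb{Z}$-order ideals $\mathcal{L}(T_E)$, and then to invoke the identity $\udim(T_E)=\udim'(T_E)$ so that I may work with the direct-sum formulation $\udim'$. The three dictionary entries I need are: \emph{(a)} a $\mathbb{Z}$-order ideal $\langle H\rangle$ is essential if and only if $H$ is cofinal, which is exactly Proposition~\ref{prop:Hcofinaliffzorderidealessential}; \emph{(b)} $\langle H\rangle$ is uniform if and only if $H$ is connected; and \emph{(c)} disjointness of ideals corresponds to disjointness of subsets, while the sum of generated ideals is the ideal generated by the union of the subsets.

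For \emph{(c)} I would use that $H\mapsto\langle H\rangle$ is an order isomorphism of lattices, hence preserves meets and joins; since the meet in $\mathcal{L}(E)$ is intersection and the join is the hereditary saturated closure of the union, this gives $\langle K\rangle\cap\langle L\rangle=\langle K\cap L\rangle$ and $\sum_{k}\langle H_k\rangle=\langle\bigcup_k H_k\rangle$. In particular $\langle K\rangle\cap\langle L\rangle=\{0\}$ if and only if $K\cap L=\varnothing$, so by Proposition~\ref{finalbrushes}(3) a family $\{\langle H_k\rangle\}$ forms a direct sum if and only if the $H_k$ are pairwise disjoint. For \emph{(b)} I would first observe that the $\mathbb{Z}$-order ideals of $T_E$ contained in $\langle H\rangle\cong T_H$ are exactly the $\langle K\rangle$ with $K\subseteq H$ hereditary saturated, after checking that hereditariness and saturation relative to the restricted graph $E_H$ coincide with those relative to $E$ for subsets of $H$. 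Then $\langle H\rangle$ is uniform, i.e. any two nonzero sub-ideals meet nontrivially, if and only if any two nonempty hereditary saturated $K,L\subseteq H$ satisfy $K\cap L\neq\varnothing$ by \emph{(c)}, which is precisely the definition of $H$ being connected.

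With the dictionary in hand both implications are immediate. For the forward direction, $\udim(T_E)=n$ gives $\udim'(T_E)=n$, so there is an essential $\mathbb{Z}$-order ideal $I=\bigoplus_{k=1}^n I_k$ with each $I_k$ uniform; writing $I_k=\langle H_k\rangle$ and $I=\langle\bigcup_k H_k\rangle$, entries \emph{(a)}, \emph{(b)}, \emph{(c)} turn this into $n$ pairwise disjoint connected hereditary saturated subsets $H_k$ with $\bigcup_k H_k$ cofinal. For the converse, given such $H_1,\dots,H_n$, the ideals $\langle H_k\rangle$ are uniform and pairwise disjoint, so $\bigoplus_{k=1}^n\langle H_k\rangle=\langle\bigcup_k H_k\rangle$ is an essential ideal decomposing into $n$ uniform ideals; by Theorem~\ref{Steinitzlemma} this pins down $\udim'(T_E)=n$, hence $\udim(T_E)=n$.

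The main obstacle I anticipate is entry \emph{(b)}: carefully identifying the sub-ideals of $\langle H\rangle$ with the hereditary saturated subsets contained in $H$. This requires the isomorphism $\langle H\rangle\cong T_H=T_{E_H}$ together with the verification that saturation relative to $E_H$ agrees with saturation in $E$, the key point being that if $v\in E^0_\reg$ satisfies $r(s^{-1}(v))\subseteq K\subseteq H$, then $v\in H$ already by saturation of $H$, so no new saturation phenomena occur outside $H$. Everything else is a formal consequence of the lattice isomorphism \eqref{corpara}, Proposition~\ref{finalbrushes}(3), Proposition~\ref{prop:Hcofinaliffzorderidealessential}, and the well-definedness of $\udim'$ coming from Theorem~\ref{Steinitzlemma}.
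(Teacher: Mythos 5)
Your proposal is correct and follows essentially the same route as the paper: translate everything through the lattice isomorphism \eqref{corpara} so that cofinality corresponds to essentiality (Proposition~\ref{prop:Hcofinaliffzorderidealessential}), connectedness to uniformity, and pairwise disjointness to direct sums via Proposition~\ref{finalbrushes}(3). You are somewhat more explicit than the paper in invoking $\udim=\udim'$ and in checking that sub-ideals of $\langle H\rangle$ correspond to hereditary saturated subsets of $H$ (the paper leaves the converse direction as ``similar''), but these are elaborations of the same argument rather than a different one.
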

\begin{proof}
   Let  $H_1, H_2, \dots, H_n$ be  $n$ pairwise disjoint connected hereditary and saturated  subsets  of $E$. Set $I_i:=\langle H_i \rangle$, where $1\leq i \leq n$. The disjointness of $H_i$'s implies that $I_i\cap I_j =\{0\}$, for $i\not = j$. Thus by Proposition~\ref{finalbrushes}(3), the ideals form a direct sum  
   $\bigoplus_{i=1}^n I_i$. 
   
Since \( \bigoplus_{i=1}^n I_i = \langle \, \bigcup_{i=1}^n H_i\,  \rangle \) and \( \bigcup_{i=1}^n H_i \) is cofinal, this ideal is essential. Moreover,  since each \( H_i \) is connected, any two \(\Gamma\)-order ideals \( A, B \subseteq I_i \) must intersect nontrivially; otherwise, they would give rise to two hereditary and saturated subsets \( K, L \subseteq H_i \) with \( K \cap L = \varnothing \), which is a contradiction. Thus, each \( I_i \) is uniform. We conclude that \( \udim(T_E) = n \).

   The proof of the converse is similar. 
\end{proof}

A \(\Gamma\)-order ideal \( I \) of a $\Gamma$-monoid $M$ is called \emph{superfluous} if for any \(\Gamma\)-order ideal \( J \), the equality \( I + J = M \) implies \( J = M \).

\begin{example}\label{uniformexam}

Consider the graphs \( E \), \( F \), and \( G \) below. We have \( \udim(T_E) = 1 \) and \( \udim(T_F) = \udim(T_G) = 2 \).  
Now, consider the \(\mathbb{Z}\)-order ideal \( I = \langle v \rangle \) in each of these three monoids. In \( T_E \), the ideal \( I \) is both essential and superfluous, whereas in \( T_F \), \( I \) is superfluous but not essential. Moreover, in \( T_G \), \( I \) is neither superfluous nor essential.

\begin{figure}[H]
\centering
\begin{tikzpicture}[scale=3.5]
\fill (0,0.2)  circle[radius=.6pt];
\draw (-0.4,.4) node{$E:$};
\fill (0.5,0.2)  circle[radius=.6pt];
\draw (0.5,0.1) node{$v$};
\draw[->, shorten >=5pt] (0,0.2) to (.5,0.2);
\draw[->, shorten >=5pt] (0,0.2) to[in=220,out=130, loop] (0,0.2);
\end{tikzpicture}
\begin{tikzpicture}[scale=3.5]
\fill (1,0)  circle[radius=.6pt];
\draw (0.7,.2) node{$F:$};
\fill (1.5,0.3)  circle[radius=.6pt];
\fill (1.5,-0.3)  circle[radius=.6pt];
\draw[->, shorten >=5pt] (1,0) to (1.5,0.3);
\draw[->, shorten >=5pt] (1,0) to (1.5,-0.3);
\draw[->, shorten >=5pt] (1.5,-0.3) to[in=120,out=60, loop] (1,0);
\draw[->, shorten >=5pt] (1.5,-0.3) to[in=30,out=-30, loop] (1,0);
\fill (2,0.3)  circle[radius=.6pt];
\draw[->, shorten >=5pt] (1.5,0.3) to (2,0.3);
\draw (2, 0.2) node{$v$};
\draw[->, shorten >=5pt] (1,0) to[in=220,out=130, loop] (1,0);
\end{tikzpicture}
\begin{tikzpicture}[scale=3.5]
\fill (2.1,0)  circle[radius=.6pt];
\draw (2.2,0.2) node{$G:$};
\fill (2.6,0.3)  circle[radius=.6pt];
\fill (2.6,-0.3)  circle[radius=.6pt];
\draw[->, shorten >=5pt] (2.1,0) to (2.6,0.3);
\draw[->, shorten >=5pt] (2.1,0) to (2.6,-0.3);
\draw[->, shorten >=5pt] (2.6,-0.3) to[in=120,out=60, loop] (2.5,-0.3);
\draw[->, shorten >=5pt] (2.6,-0.3) to[in=30,out=-30, loop] (2.5,-0.3);
\fill (3.1,0.3)  circle[radius=.6pt];
\draw[->, shorten >=5pt] (2.6,0.3) to (3.1,0.3);
\draw (3.1, 0.2) node{$v$};
\end{tikzpicture}

\end{figure}

\end{example}

Next, we prove a lemma that will be useful later. A variation of Lemma~\ref{pesto} with different proof can be found in \cite[Theorem 3.7(2)]{meteorgraphs}.

\begin{lemma}\label{pesto}
    Let $E$ be a finite graph and $T_E$ its associated talented monoid.  Then, for any $x\in T_E$ we have that
    $$x = \sum_{i=1}^n v_i(n_i),$$
    where $n_i\in \Z$ and the vertex $v_i$ is either a sink or belongs to $\mathfrak{c}^0$ for a cycle $\mathfrak{c}$.
\end{lemma}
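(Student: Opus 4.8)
The plan is to reduce the statement to single generators and then eliminate the ``bad'' ones by a well-founded induction. Call a vertex $v$ \emph{good} if it is a sink or lies on a cycle, and \emph{bad} otherwise; note that a bad vertex is necessarily regular, since a non-regular vertex is a sink, and every vertex of a cycle is regular. Thus $E^0$ is partitioned into good and bad vertices. Every element $x\in T_E$ is a finite sum of generators, $x=\sum_k v_k(i_k)$, so by replacing each summand with an equal expression it suffices to show that for each bad vertex $v$ and each $i\in\Z$, the generator $v(i)$ equals a sum of generators of the form $w(m)$ with $w$ good.

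The engine of the induction is the defining relation $v(i)=\sum_{e\in s^{-1}(v)}r(e)(i+1)$, which holds in $T_E$ and lets me trade a bad generator for the generators attached to its out-neighbours (shifted by $1$). To guarantee termination I would order the bad vertices by declaring $w<_E v$ whenever there is a nontrivial path from $v$ to $w$. The claim is that $<_E$ is a strict partial order on the (finite) set of bad vertices, and the key observation behind it is that if $v$ lies on no cycle then there is no nontrivial closed path based at $v$: a shortest such closed path based at $v$ cannot have an internal repeated vertex (otherwise it could be shortened while remaining based at $v$), so it would itself be a cycle through $v$, a contradiction. Consequently $<_E$ is irreflexive, and antisymmetry follows the same way, since $v<_E w<_E v$ would concatenate into a nontrivial closed path based at $v$. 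Being a strict partial order on a finite set, $<_E$ is well-founded, which is exactly what the induction needs.

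With this order in hand the induction is immediate. Fixing a bad vertex $v$, note that since $v$ is not on a cycle, for every $e\in s^{-1}(v)$ the out-neighbour $r(e)$ is different from $v$, and the edge $e$ witnesses $r(e)<_E v$ whenever $r(e)$ is itself bad. Applying the defining relation gives $v(i)=\sum_{e\in s^{-1}(v)}r(e)(i+1)$; each summand with $r(e)$ good is already of the desired form, while each summand with $r(e)$ bad can be rewritten as a sum of good generators by the induction hypothesis. Summing these expressions writes $v(i)$ as a sum of good generators, completing the induction, and then summing over $k$ handles a general $x=\sum_k v_k(i_k)$. I expect the only genuinely delicate point to be the well-foundedness of $<_E$, i.e.\ the closed-path claim above; everything else is a direct manipulation of the defining relations, so the Confluence Lemma~\ref{confuu} is not needed here.
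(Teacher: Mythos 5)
Your proof is correct, but it organizes the induction differently from the paper. The paper proves the lemma by induction on $|E^0|$: for a vertex $w$ that is neither a sink nor on a cycle, it passes to the hereditary closure $H$ of $r(s^{-1}(w))$, observes that $w\notin H$ (precisely because $w$ lies on no cycle), applies the induction hypothesis to the strictly smaller graph $E_H$, and then checks that sinks and cycles of $E_H$ are sinks and cycles of $E$ and that the rewriting in $E_H$ transports back to $E$. You instead perform a well-founded induction directly on the set of ``bad'' vertices, ordered by reachability; the heart of your argument --- that a vertex lying on no cycle admits no nontrivial closed path based at it, since a shortest such path would be a cycle --- is exactly the fact the paper uses implicitly to conclude $w\notin H$, so the two proofs share the same essential observation. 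Your version is somewhat more self-contained: it avoids passing to subgraphs and the attendant (easy but necessary) verifications that the subgraph's sinks and cycles, and its rewriting relation, are inherited by $E$; the paper's version has the advantage of reusing the hereditary-closure machinery already set up in Section~\ref{graphsdeffff}. Both arguments correctly reduce everything to repeated substitution of the defining relation $v(i)=\sum_{e\in s^{-1}(v)}r(e)(i+1)$, valid since every bad vertex is regular, and neither needs the Confluence Lemma. Your proof is a valid alternative.
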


\begin{proof}
    We proceed by induction on the size of $E^0$. If $|E^0|=1$ then the result is clearly true (either the only vertex is a sink or is the basis of a loop). 

    Assume that the result is valid for graphs such that $|E^0|$ is less than $k$ and let $E$ be a graph such that $|E^0|=k$. It is enough to show that any $w\in E^0$ can be written in the desired form. If $w$ already belongs to a cycle or is a sink, then there is nothing to do.

    Suppose that $w$ is not a sink nor belongs to $\mathfrak{c}^0$ for a cycle $\mathfrak{c}$. Let $V=r(s^{-1}(w))$ and $H$ be the hereditary closure of $V$. Since $w$ is not a sink, we have that $V$ is nonempty and, since $w$ does not belong to a cycle, we conclude that $H$ does not contain $w$. Thus, the subgraph $E_H$ of $E$ obtained by restricting the vertex set to $H$ is strictly smaller than $E$. Using the induction hypothesis, we find a collection of $v_i\in H$ (and integers $n_i$) such that each $v_i$ is a sink or belongs to a cycle in $E_H$, and for which
    $$\sum_{v\in V}v \to \sum_{i=1}^n v_i(n_i).$$
    Since $H$ is hereditary, all the $v_i$ which are sinks in $E_H$ are also sinks in $E$ and, clearly, any $v_i$ which belongs to a cycle in $E_H$ does so for $E$ as well. Moreover, since $H$ is hereditary, the relation $\sum_{v\in V}v \to \sum_{i=1}^n v_i(n_i)$ in $E_H$ may be realized in the same manner in $E$. So, we have
    $$w\to \sum_{v\in V}v(1)\to\sum_{i=1}^n v_i(n_i+1),$$
    with the $v_i$ being either a sink or belonging to $\mathfrak{c}^0$ for a cycle $\mathfrak{c}$ in $E$.\qedhere
\end{proof}

 Next, we recall a result that will be useful later, a proof of which can be found in \cite{hazli}.

\begin{thm}\label{brilllthem}
Let $E$ be a row-finite graph and $T_E$ its associated talented monoid. Then, $\mathbb Z$ acts freely on $T_E$ if and only if any cycle in $E$ has an exit, i.e., satisfies Condition~(L). 
\end{thm}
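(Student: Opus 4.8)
The plan is to prove both implications by contraposition, working throughout in the covering graph $\overline{E}$, where $T_E\cong M_{\overline{E}}$ and the $\mathbb{Z}$-action is the level shift ${}^n(v,i)=(v,i+n)$; this lets me use the Confluence Lemma~\ref{confuu}. The guiding observation is that every edge of $\overline{E}$ raises the level by one, so $\overline{E}$ is acyclic and each application of $\to_1$ strictly increases the level of the vertex it acts on.

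For the easy direction, suppose $E$ fails Condition~(L), so there is a cycle $\mathfrak{c}=e_1\cdots e_n$ with no exit; write $v=s(e_1)$. Having no exit means each $s(e_i)$ emits only the edge $e_i$, so the defining relation of $T_E$ at $s(e_i)$ reads $s(e_i)(j)=s(e_{i+1})(j+1)$. Composing these around the cycle gives $v=v(0)=s(e_2)(1)=\cdots=v(n)={}^n v$. Since $v\neq 0$ and $n\neq 0$, the action is not free. Hence free action forces Condition~(L).

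For the converse I would assume the action is not free and build a cycle without exit. Fix $a\neq 0$ and $n>0$ with ${}^n a=a$. The key device is the normal form $N_\ell(a)$ obtained by reducing, via $\to_1$, every generator of $a$ up to level $\ell$, freezing any sink that is reached; by the Confluence Lemma this form is well defined. In $N_\ell(a)$ the coefficient of a regular generator $(w,\ell)$ is the number $P_\ell(w)$ of paths from the support of $a$ that reach $w$ at level $\ell$, while a sink reached at level $j$ contributes a frozen term $(z,j)$ with coefficient $Q_j(z)$. Comparing $N_\ell(a)=N_\ell({}^n a)={}^n N_{\ell-n}(a)$ and matching coefficients on the free generators of $F_{\overline{E}}$ yields two facts: (i) $P_\ell(w)=P_{\ell-n}(w)$ for every regular $w$ and all large $\ell$, i.e. the path counts are $n$-periodic; and (ii) the sink coefficients satisfy $Q_j(z)=Q_{j-n}(z)$ with $Q_j(z)=0$ for the lowest window of levels, which cascades to $Q_j(z)=0$ for all $j$. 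Thus no sink is reachable from the support of $a$, so every reachable vertex is regular. Now set $T_\ell=\sum_w P_\ell(w)$, the number of paths of ``depth $\ell$'' from the support of $a$. Since every reachable vertex has out-degree $\geq 1$, we get $T_{\ell+1}\geq T_\ell$; since $T_\ell$ is $n$-periodic by (i), it is bounded, hence eventually constant, and once $T_{\ell+1}=T_\ell$ every endpoint of a depth-$\ell$ path must have out-degree exactly $1$. So all sufficiently deep reachable vertices have out-degree $1$. K\"onig's lemma (row-finiteness together with paths of every length) produces an infinite path $u_0u_1u_2\cdots$ from the support of $a$; using the $n$-periodicity of the $P_\ell$'s once more, each $u_{L+jn}$ is reachable by a path of one fixed depth, and there are only boundedly many such paths, so the vertices $u_{L+jn}$ must repeat. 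Because the deep part of the path is deterministic (out-degree $1$), a repeat closes up into a cycle all of whose vertices have out-degree $1$, i.e. a cycle with no exit, as desired.

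The main obstacle is the last paragraph: first, justifying rigorously that the level-$\ell$ normal form is well defined and that $N_\ell(a)=N_\ell({}^n a)$ (this is exactly where the Confluence Lemma~\ref{confuu} does the real work), and second, combining the two consequences of periodicity correctly in the row-finite setting. Boundedness of the path counts alone only yields an eventually deterministic infinite ray — witness the two-sided line graph, which is cycle-free and free — so it is the extra $n$-periodicity of the counts $P_\ell(w)$ that forces the deterministic ray to revisit a vertex and thereby close into a cycle. Reconciling these two inputs, and ruling out the escape route of a reachable sink via consequence (ii), is the crux of the argument.
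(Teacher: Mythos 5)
The paper does not actually prove Theorem~\ref{brilllthem}; it only cites \cite{hazli}, so there is no in-text argument to compare against. Judged on its own, your proposal is correct and complete in outline. The easy direction is exactly the standard one: a cycle without exit forces $v={}^{n}v$ for $v$ a vertex on the cycle, and $v\neq 0$ by conicality. For the converse, the key device --- the level-$\ell$ normal form $N_\ell(a)$ in $F_{\overline{E}}$ --- is legitimate: since each application of $\to_1$ strictly raises the level and is deterministic at each regular generator, pushing every regular generator of a representative up to level $\ell$ terminates (row-finiteness) and is independent of the order of reductions; moreover $a\to_1 b$ implies $N_\ell(a)=N_\ell(b)$ for $\ell$ large, so the Confluence Lemma~\ref{confuu} applied to $a={}^na$ really does give the identity $N_\ell(a)={}^nN_{\ell-n}(a)$ \emph{in the free monoid}, which justifies the coefficient matching. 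From there your two consequences (periodicity of the path counts $P_\ell(w)$, vanishing of the sink counts $Q_j(z)$ by downward cascade), the monotone-plus-periodic-hence-constant argument for $T_\ell=\sum_wP_\ell(w)$, and the pigeonhole on $\{u_{jn}\}$ using $P_{\ell_0}(u_{jn})=P_{\ell_0+jn}(u_{jn})>0$ all go through; the only cosmetic points to tighten are that the deterministic ray is produced directly by the out-degree-one property (K\"onig's lemma is not needed), and that the closed path obtained from the first repetition along the ray must be trimmed to its first return to yield a genuine cycle, which then has no exit because all its vertices have out-degree one. This is the same basic toolkit (Confluence Lemma in the covering graph plus a counting argument) used in \cite{hazli}, organized somewhat differently.
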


For an ideal generated by
a saturated hereditary subset of $E^0$, we will characterize its orthogonal ideal in $T_E$.  For this, we need the following definition. Recall the notation $R(H)$ from Equation~(\ref{gfhfyr667}). 

\begin{deff}\label{peropdef}
    Let $E$ be a directed graph and  $H$ a subset of \(E\). Define $H^\perp$ as the set of all vertices $v$ in $E$ for which there is no finite path starting at $v$ and ending at a vertex of $H$, that is, 
    \[H^\perp=E^0\setminus R(H).\]
     Moreover, we say that $H$ is \emph{regular} if $H^{\perp\perp} =H$.
\end{deff}

Note that if $H$ is hereditary then $H \subseteq H^{\perp\perp} $. Notice also that $H^\perp$ is a hereditary and saturated subset of $E^0$. 

Throughout the rest of the article, unless otherwise stated, $H$ stands for a hereditary and saturated subset of $E^0$, and so it corresponds to a $\Z$-order ideal $\langle H \rangle$ of the talented monoid $T_E$. 
    
    

\begin{prop}\label{marabeboos}
    Let $E$ be a row-finite graph and \(H\) a hereditary saturated subset of \(E^0\). Then, in the monoid \(T_E\), we have \(\langle H \rangle ^\perp=\langle H^\perp \rangle \).
\end{prop}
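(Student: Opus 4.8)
The plan is to prove the two inclusions $\langle H^\perp\rangle \subseteq \langle H\rangle^\perp$ and $\langle H\rangle^\perp \subseteq \langle H^\perp\rangle$ separately, in each case reducing the problem to a statement about single vertices. The central translation tool will be the Confluence Lemma~\ref{confuu}, applied through the identification $T_E\cong M_{\ol E}$. I will repeatedly use two elementary facts about the rewriting relation $\to$ on the free monoid: whenever $v$ is a vertex and $v\to d$, every vertex occurring in $d$ lies in the tree $T(v)$; and if $a=\sum_k w_k(m_k)$ with all $w_k$ in a hereditary set $K$ and $a\to d$, then every vertex of $d$ lies in $K$. The key auxiliary implication I will isolate is: \emph{if a vertex $v$ satisfies $v\geq y$ in $T_E$ for some nonzero $y\in\langle H\rangle$, then $v\in R(H)$.} To prove it, write $v=y+c$ with $y=\sum_k w_k(m_k)$, $w_k\in H$; by Lemma~\ref{confuu}(1) choose a common descendant $d$ of the representatives of $v$ and of $y+c$, split it by Lemma~\ref{confuu}(2) as $d=d_y+d_c$ with $y\to d_y$ and $c\to d_c$, and observe that $d_y\neq 0$ (conicality), that every vertex of $d_y$ lies in $H$ (hereditariness of $H$), and that every vertex of $d$, hence of $d_y$, lies in $T(v)$. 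A vertex $w\in H\cap T(v)$ then witnesses $v\in R(H)$.

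For the inclusion $\langle H^\perp\rangle\subseteq\langle H\rangle^\perp$, I will show that every vertex $v\in H^\perp$ satisfies $v\parallel\langle H\rangle$; since $\langle H\rangle^\perp$ is a $\Z$-order ideal (as established above) containing all these generators and their shifts, it then contains the ideal $\langle H^\perp\rangle$ they generate. So suppose $v\in H^\perp$, i.e.\ $v\notin R(H)$, were comparable to some nonzero $i\in\langle H\rangle$. If $v\leq i$, then $v\in\langle H\rangle$ because $\langle H\rangle$ is an order ideal, and taking $y=v$ in the auxiliary implication yields $v\in R(H)$; if $i\leq v$, then $v\geq i$ with $i$ a nonzero element of $\langle H\rangle$, and the auxiliary implication again gives $v\in R(H)$. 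Either way we contradict $v\in H^\perp$.

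For the reverse inclusion, take $0\neq x\in\langle H\rangle^\perp$ and write $x=\sum_i v_i(n_i)$ as a sum of generators of $T_E$ (each $v_i(n_i)$ nonzero, since vertices are nonzero in $T_E$). Since $x\parallel\langle H\rangle$, Lemma~\ref{lynchpinlem} forces each summand $v_i(n_i)\parallel\langle H\rangle$. I claim this implies $v_i\in H^\perp$: indeed, if $v_i\in R(H)$ then there is a path of some length $\ell$ from $v_i$ to a vertex $w\in H$, and expanding $v_i(n_i)$ step by step along this path (each intermediate vertex being regular, as it emits an edge of the path) yields $v_i(n_i)\to w(n_i+\ell)+(\text{rest})$, so $w(n_i+\ell)\leq v_i(n_i)$ with $w(n_i+\ell)$ a nonzero element of $\langle H\rangle$, contradicting $v_i(n_i)\parallel\langle H\rangle$. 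Hence each $v_i\in H^\perp$, so each $v_i(n_i)\in\langle H^\perp\rangle$, and therefore $x\in\langle H^\perp\rangle$. Combining the two inclusions gives $\langle H\rangle^\perp=\langle H^\perp\rangle$.

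The main technical obstacle is the bookkeeping in the auxiliary implication: one must carefully move between the order relation in $T_E$ and the rewriting relation $\to$ on the free monoid via confluence, and correctly track that expansions emanating from $v$ never leave $T(v)$ while expansions of an $H$-supported element never leave $H$. The other point requiring care is the legitimacy of the reduction to single vertices, which relies on $T_E$ being a conical refinement $\Z$-monoid so that Lemma~\ref{lynchpinlem} and the fact that $\langle H\rangle^\perp$ is a $\Z$-order ideal are both available; these hold for talented monoids of row-finite graphs.
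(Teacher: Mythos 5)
Your proof is correct and follows essentially the same route as the paper: both directions reduce to single vertices and use the Confluence Lemma to show that comparability with $\langle H\rangle$ forces a path into $H$ (i.e.\ membership in $R(H)$), while a path from $v_i$ into $H$ produces a nonzero element of $\langle H\rangle$ below $v_i(n_i)$. Your explicit isolation of the auxiliary implication and the appeal to Lemma~\ref{lynchpinlem} are just a slight reorganization of the paper's argument, not a different method.
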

\begin{proof}
    Let \(v\in H^\perp\), and consider the associated element \(v\) of \(T_E\). We show that \(v\in \langle H \rangle ^\perp\), that is,  $v \parallel \langle H \rangle$. Since \(v\in H^\perp\), there is no finite path starting at $v$ that ends in $H$. Seeking a contradiction, suppose that $v$ is comparable with $\langle H \rangle$. Then there is an $x\in \langle H \rangle$ such that $v=x$, or $v > x$, or $v < x$. Note that $x$ can be written as a nontrivial sum of elements of the form \(w(i)\), where \(w\in H\) and \(i\in\mathbb{Z}\).
    
    Suppose 
     \(v >  x\) for some $x\in \langle H \rangle$. By Lemma \ref{confuu}, we may find elements \(z,t\) of the free monoid generated by \(E\times\mathbb{Z}\) in such a way that
    \[v\to z+t,\quad\text{ and } \quad x\to t.\]
    However, \(v\) belongs to \(H^\perp\), which is hereditary, and hence all the vertices that appear in \(z+t\) (and in particular those which appear in \(t\)) also belong to \(H^\perp\). Similarly, as \(x\to t\), the vertices that appear in \(t\) also belong to \(H\). Since \(x\) is nontrivial, so is \(t\). Therefore, picking any vertex that appears in the representation of $t$ as an element of the free monoid, we have found an element of \(H\cap H^\perp\), which is a contradiction as \(H\) and \(H^\perp\) are disjoint. Other cases can be argued similarly.
    
    We showed above that if \(v\in H^\perp\), then \(v\in \langle H \rangle^\perp\). As $\langle H \rangle^\perp$ is an order ideal, this implies that \(\langle H^\perp \rangle \subseteq \langle H \rangle^\perp\).
    
    Conversely, suppose that an element \(x\) of \(T_E\) does not belong to \(\langle H^\perp \rangle\). We need to prove that \(x\) also does not belong to \(\langle H \rangle^\perp\), that is \(x\geq y\) or $x<y$ for some \(y\in \langle H \rangle\setminus\left\{0\right\}\).
    
    Write \(x\) as a sum of elements of the form \(v(i)\), with \(v\in E^0\) and \(i\in\mathbb{Z}\). As \(x\not\in \langle H^\perp \rangle \), we have that some \(v\) in the representation of $x$  does not belong to \(H^\perp\).
    Since \(H^\perp=E^0\setminus R(H)\), we conclude that \(v\in R(H)\), that is, there is a path \(\alpha\) from \(v\) to some element of \(H\). In this manner, \(v(i)\geq y\), where \(y=r(\alpha)(i+|\alpha|)\) is a nontrivial element of \( \langle H \rangle \). Therefore,
    \[x\geq v(i)\geq y,\]
    and \(y\in \langle H \rangle \setminus\left\{0\right\}\), as desired.  \qedhere
\end{proof}

Note that, for a \(\mathbb{Z}\)-order ideal \(I\) of \(T_E\), the ideal \(I^{\perp\perp}\) is the smallest regular ideal of \(T_E\) that contains \(I\). Similarly, given a hereditary subset \(H\) of \(E^0\), where \(E\) is a row-finite graph, \(H^{\perp\perp}\) is the smallest regular hereditary saturated subset of \(E^0\) that contains \(H\).

Next, we give a geometric characterization of the ``regular closure'':

\begin{prop}\label{Avai}
    Let \(E\) be a graph. Then, for any  hereditary subset \(H\subseteq E^0\), we have that
    \begin{equation}\label{eq:doubleperpsubset}
    H^{\perp\perp}=\left\{w\in E^0\mid T(w)\subseteq R(H)\right\}.
    \end{equation}
    
    In other words, \(H^{\perp\perp}\) consists of all \(w\in E^0\) with the property that every path starting at \(w\) can be extended to a path ending at \(H\).
\end{prop}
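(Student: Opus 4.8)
The plan is to observe that the entire statement lives purely at the level of subsets of $E^0$, so that no reference to the monoid $T_E$ is needed; everything reduces to unwinding the two complements hidden inside $H^{\perp\perp}$. Recalling Definition~\ref{peropdef}, for \emph{any} subset $K\subseteq E^0$ we have $K^\perp=E^0\setminus R(K)$, where $R(K)$ is the set of vertices connecting to $K$ by a finite path. Applying this twice, first with $K=H$ and then with $K=H^\perp$, the first step is simply to write down the double complement
\[
H^{\perp\perp}=(H^\perp)^\perp=E^0\setminus R(H^\perp)=E^0\setminus R\big(E^0\setminus R(H)\big).
\]

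The second step is to translate the condition ``$w\notin R(H^\perp)$'' into a statement about the tree $T(w)$. By definition of $R$, a vertex $w$ lies in $R(H^\perp)$ if and only if there is a finite path from $w$ to some vertex of $H^\perp$, which, since $T(w)=\{v\in E^0\mid w\geq v\}$ is exactly the set of vertices reachable from $w$, is equivalent to $T(w)\cap H^\perp\neq\varnothing$. Negating, $w\in H^{\perp\perp}$ if and only if $T(w)\cap H^\perp=\varnothing$; and using $H^\perp=E^0\setminus R(H)$ once more, this last condition is precisely $T(w)\subseteq R(H)$. This establishes the displayed equality~(\ref{eq:doubleperpsubset}).

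Finally, I would record the promised geometric reformulation. The condition $T(w)\subseteq R(H)$ says that every vertex $v$ reachable from $w$ itself connects to $H$. Indeed, given any finite path $\alpha$ starting at $w$, its range $r(\alpha)$ lies in $T(w)\subseteq R(H)$, so $r(\alpha)\geq u$ for some $u\in H$; concatenation then exhibits an extension of $\alpha$ ending in $H$. Conversely, if every path out of $w$ extends to one ending in $H$, then each $v\in T(w)$ is reached by some path which can be extended to $H$, whence $v\in R(H)$ and $T(w)\subseteq R(H)$.

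I do not expect a genuine obstacle here: the argument is a double-complement bookkeeping exercise, and the only thing to watch is keeping the two negations (and the two applications of $R$) straight, together with the fact that $T(w)$ is literally the reachability set appearing in the definition of $R$. It is worth noting in passing that the hereditary hypothesis on $H$ plays no role in this computation, as $\perp$ is defined for arbitrary subsets of $E^0$. As an alternative, one could instead verify the two inclusions of~(\ref{eq:doubleperpsubset}) separately, but the direct complement computation above is cleaner and avoids any case analysis.
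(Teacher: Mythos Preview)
Your proof is correct and is essentially identical to the paper's own argument: both simply unwind the two applications of $K^\perp=E^0\setminus R(K)$ to obtain $H^{\perp\perp}=E^0\setminus R(H^\perp)$, and then rephrase ``$w\notin R(H^\perp)$'' as ``$T(w)\subseteq R(H)$''. Your observation that the hereditary hypothesis on $H$ is not actually used is also accurate (the paper itself remarks just after the proposition that the right-hand side makes sense for arbitrary $H$).
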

\begin{proof}
    By definition,
    \[H^{\perp\perp}=E^0\setminus R(H^\perp)\quad \text{and}\quad H^{\perp}=E^0\setminus R(H).\]
    Then, for \(w\in E^0\), we have \(w\in H^{\perp\perp}\) if and only if there is no path from \(w\) to \(H^\perp\), which is to say that every path starting at \(w\) ends at \(R(H)\). This amounts to \(T(w)\subseteq R(H)\).\qedhere
\end{proof}

Note that the set on the right-hand side of \eqref{eq:doubleperpsubset} is always well-defined and hereditary (even for possibly non-hereditary $H$). As a consequence, we characterize regular subsets of $E^0$:

\begin{cor}\label{jambu}
    A subset \(H\subseteq E^0\) is regular  if and only if,
    \begin{equation}\label{hfgfyr7}
    H=\left\{w\in E^0\mid T(w)\subseteq R(H)\right\}.
    \end{equation}
\end{cor}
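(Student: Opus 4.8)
The plan is to derive Corollary~\ref{jambu} directly from Proposition~\ref{Avai} by unwinding the definition of regularity. Recall that a subset $H\subseteq E^0$ is regular precisely when $H^{\perp\perp}=H$. Proposition~\ref{Avai} provides an explicit description of the right-hand side, namely $H^{\perp\perp}=\left\{w\in E^0\mid T(w)\subseteq R(H)\right\}$, so I would simply substitute this formula into the definition. This immediately yields that $H$ is regular if and only if $H$ equals the set described in equation~(\ref{hfgfyr7}).

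More carefully, I would argue in two directions. First, if $H$ is regular, then $H=H^{\perp\perp}$, and since Proposition~\ref{Avai} identifies $H^{\perp\perp}$ with $\left\{w\in E^0\mid T(w)\subseteq R(H)\right\}$, the characterization~(\ref{hfgfyr7}) holds. Conversely, if $H$ satisfies~(\ref{hfgfyr7}), then $H$ coincides with the set $\left\{w\in E^0\mid T(w)\subseteq R(H)\right\}$, which by Proposition~\ref{Avai} is exactly $H^{\perp\perp}$; hence $H^{\perp\perp}=H$ and $H$ is regular by definition.

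One subtlety worth checking is the hypothesis requirements of Proposition~\ref{Avai}: it is stated for a hereditary subset $H$. However, the remark following Proposition~\ref{Avai} notes that the set $\left\{w\in E^0\mid T(w)\subseteq R(H)\right\}$ is always well-defined and hereditary, even for non-hereditary $H$. I would verify that the proof of Proposition~\ref{Avai} itself only uses the definitions $H^{\perp\perp}=E^0\setminus R(H^\perp)$ and $H^\perp=E^0\setminus R(H)$, which make sense for any subset $H$; the passage $w\in H^{\perp\perp}\iff T(w)\subseteq R(H)$ does not actually invoke hereditariness of $H$. Thus the identity $H^{\perp\perp}=\left\{w\in E^0\mid T(w)\subseteq R(H)\right\}$ remains valid for an arbitrary $H\subseteq E^0$, which is exactly what is needed for the ``if and only if'' in the corollary to range over all subsets.

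Since this corollary is a direct reformulation, there is no genuine obstacle; the only care needed is to confirm that the substitution is legitimate over the full class of subsets claimed in the statement, rather than only hereditary ones. Given that caveat, the proof is essentially a one-line invocation of Proposition~\ref{Avai} together with the definition of regularity.
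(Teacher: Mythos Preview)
Your proposal is correct and matches the paper's approach exactly: the corollary is stated without proof there, being an immediate substitution of the formula from Proposition~\ref{Avai} into the definition of regularity. Your additional care in checking that the identity $H^{\perp\perp}=\{w\in E^0\mid T(w)\subseteq R(H)\}$ holds for arbitrary (not just hereditary) subsets is a useful observation that the paper leaves implicit.
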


Recall from (\ref{corpara}) that there is a lattice isomorphism  $\phi\colon \mathcal L(E) \rightarrow \mathcal L(T_E); H \mapsto \langle H \rangle$. Proposition~\ref{marabeboos} now implies that, under $\phi$,  regular hereditary and saturated subsets are in one to one correspondence with regular $\mathbb Z$-order ideals.  

Next, we show that the regular ideals of the monoid preserve the freeness of the \(\mathbb{Z}\)-action under quotients. We refer the reader to \cite[Lemma~2.2]{hazli} and the preceding paragraph for the definition of a quotient monoid and the fact that talented monoids respect both graph and monoid quotients.

\begin{thm}\label{freeactiondom}
    Let $E$ be a row-finite graph, $T_E$ its associated talented monoid, and $I$ a regular $\mathbb Z$-order ideal of $T_E$.
        If $\mathbb Z$ acts freely on $T_E$, then $\mathbb Z$ acts freely on $T_E/I$ as well. 
\end{thm}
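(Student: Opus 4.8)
The plan is to reduce the whole statement to a purely combinatorial assertion about Condition~(L) for a quotient graph, and then argue on paths. By the lattice isomorphism $\phi$ of~(\ref{corpara}) together with the correspondence between regular $\mathbb{Z}$-order ideals and regular hereditary saturated subsets established via Proposition~\ref{marabeboos} and Corollary~\ref{jambu}, I would write $I=\langle H\rangle$ with $H$ a regular hereditary saturated subset of $E^0$, and use the identification $T_E/I\cong T_{E/H}$. By Theorem~\ref{brilllthem}, the hypothesis that $\mathbb{Z}$ acts freely on $T_E$ is exactly the statement that $E$ satisfies Condition~(L), and the desired conclusion that $\mathbb{Z}$ acts freely on $T_E/I\cong T_{E/H}$ is exactly that the (again row-finite) graph $E/H$ satisfies Condition~(L). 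So the theorem becomes: \emph{if $E$ satisfies Condition~(L) and $H$ is regular, then $E/H$ satisfies Condition~(L).}

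Next I would record the shape of the quotient graph in the row-finite case (where there are no breaking vertices): $E/H$ has vertex set $E^0\setminus H$ and edge set $\{e\in E^1\mid r(e)\notin H\}$, hereditariness of $H$ making $s(e)\notin H$ automatic for such $e$. Consequently a cycle in $E/H$ is precisely a cycle $\mathfrak{c}=e_1\cdots e_n$ of $E$ with $\mathfrak{c}^0\cap H=\varnothing$, and an exit of $\mathfrak{c}$ in $E/H$ is an edge $g$ of $E$ with $s(g)\in\mathfrak{c}^0$, $g$ not a cycle edge, and $r(g)\notin H$. Thus I must show that every cycle $\mathfrak{c}$ of $E$ avoiding $H$ emits such an exit.

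I would argue by contradiction. Assume $\mathfrak{c}$ avoids $H$ yet has no exit in $E/H$; that is, every non-cycle edge emitted by a vertex of $\mathfrak{c}$ has range in $H$. Fix $v\in\mathfrak{c}^0$; the goal is to prove $T(v)\subseteq R(H)$, which by Corollary~\ref{jambu} (using regularity of $H$, specifically the inclusion $\{w\mid T(w)\subseteq R(H)\}\subseteq H$) would force $v\in H$, contradicting $v\in\mathfrak{c}^0\subseteq E^0\setminus H$. To establish $T(v)\subseteq R(H)$, take any path $\alpha$ from $v$ to a vertex $w$. If $\alpha$ uses a non-cycle edge, let $g$ be the first such edge; then $s(g)\in\mathfrak{c}^0$, so $r(g)\in H$ by assumption, and hereditariness of $H$ gives $w\in H\subseteq R(H)$. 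If instead $\alpha$ uses only cycle edges, then $w\in\mathfrak{c}^0$, and here I invoke Condition~(L) for $E$: it yields an exit $f$ of $\mathfrak{c}$ in $E$, which by the no-exit-in-$E/H$ assumption satisfies $r(f)\in H$; travelling around the cycle from $w$ to $s(f)$ and then along $f$ gives a path from $w$ into $H$, so $w\in R(H)$.

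The main obstacle is exactly the second case, where the path stays on the cycle: a cycle vertex has no intrinsic reason to connect to $H$ unless the cycle has an escape route, and this is precisely where the freeness hypothesis on $T_E$ (equivalently Condition~(L) on $E$) becomes indispensable, producing the exit $f$ whose range is then trapped in $H$ by the contradiction hypothesis. The remaining care is bookkeeping: confirming the row-finite quotient-graph description so that ``cycle in $E/H$'' genuinely means ``cycle of $E$ avoiding $H$'', checking that $E/H$ is again row-finite so that Theorem~\ref{brilllthem} applies to it, and invoking the correct inclusion from Corollary~\ref{jambu}.
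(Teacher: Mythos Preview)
Your proposal is correct and follows essentially the same route as the paper's own proof: both translate via Theorem~\ref{brilllthem} to a Condition~(L) statement for $E/H$, argue by contradiction from a cycle in $E\setminus H$ all of whose exits land in $H$, and then invoke the regularity characterization of Corollary~\ref{jambu} to force a cycle vertex into $H$. Your version is more explicit than the paper's---you spell out the quotient-graph description and the case analysis on paths that the paper compresses into the single sentence ``Using Equation~(\ref{hfgfyr7}) we obtain that the range of each edge that forms the cycle has to be in $H$''---but the underlying argument is the same.
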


\begin{proof}
    Since $\mathbb Z$ acts freely on $T_E$ we have, by Theorem~\ref{brilllthem}, that all the cycles in $E$ have exits (i.e.,  $E$ satisfies Condition (L)). Let $I=\langle H \rangle$ for some hereditary and saturated subset $H$. Since $I$ is regular, $H$ is a regular set and thus it is described via the Equation~(\ref{hfgfyr7}) of Corollary~\ref{jambu}. Now, if $\mathbb Z$ does not act freely on $T_E/I\cong T_{E/H}$, then there is a cycle without exit in $E/H$.  This means that there is a cycle in $E \backslash H$ such that the ranges of all its exits are in $H$ (and because of Condition (L) in $E$, there is at least one exit). Using Equation~(\ref{hfgfyr7}) we obtain that the range of each edge that forms the cycle has to be in $H$, which is a contradiction. So, $\mathbb Z$ acts freely on $T_E/I$. 
\end{proof}

Next, we describe the graph conditions which guarantee that all the order ideals are regular. For our main theorem, we need the following lemma. 

\begin{lemma}\label{perghgyd}
    Let $X$ be a hereditary subset of $E$ which does not intersect the vertices of the cycle $\mathfrak{c}$, that is, such that $X\cap \mathfrak{c}^0=\varnothing$. Then, the hereditary saturated closure of $X$, $\overline X$, also does not intersect the vertices of $\mathfrak{c}$.
\end{lemma}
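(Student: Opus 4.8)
The plan is to exploit the explicit, stagewise description of the hereditary saturated closure recorded in \eqref{herclos1}. Since $X$ is already hereditary, I may write $\overline X=\bigcup_{n\ge 0}X_n$ with $X_0=X$ and $X_n=X_{n-1}\cup\{v\in E^0\mid r(s^{-1}(v))\subseteq X_{n-1}\}$. The argument will be a minimal-counterexample device (equivalently, an induction on the stage index $n$) showing that no vertex of $\mathfrak c$ can ever be added to the closure.

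First I would argue by contradiction: suppose $\overline X\cap\mathfrak c^0\neq\varnothing$ and let $n$ be the least index with $X_n\cap\mathfrak c^0\neq\varnothing$. Because $X_0=X$ is disjoint from $\mathfrak c^0$ by hypothesis, necessarily $n\ge 1$. Choosing $v\in X_n\cap\mathfrak c^0$, the minimality of $n$ forces $v\notin X_{n-1}$, so $v$ is genuinely a vertex newly added at stage $n$; by the defining rule of \eqref{herclos1} this means $r(s^{-1}(v))\subseteq X_{n-1}$.

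The key geometric observation is then that every vertex of a cycle emits a cycle edge whose range is again a vertex of the cycle. Writing $v=s(e_i)$ for the appropriate index $i$, the edge $e_i$ belongs to $s^{-1}(v)$ and satisfies $r(e_i)=s(e_{i+1})$ (with cyclic indexing, so that $e_{n+1}:=e_1$), which is once more a vertex of $\mathfrak c$. Hence $s(e_{i+1})\in r(s^{-1}(v))\subseteq X_{n-1}$, giving $s(e_{i+1})\in X_{n-1}\cap\mathfrak c^0$ and contradicting the minimality of $n$. This contradiction yields $\overline X\cap\mathfrak c^0=\varnothing$, as required.

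I do not anticipate a serious obstacle: the argument is short and essentially combinatorial. The only points requiring care are the cyclic indexing of the cycle edges and the base case $n=1$, where the contradiction is directly with the hypothesis $X\cap\mathfrak c^0=\varnothing$ rather than with minimality of an intermediate stage. It is also worth noting that each cycle vertex is automatically regular (it emits its own cycle edge), so the saturation rule applies to it without any concern about sinks entering the closure vacuously.
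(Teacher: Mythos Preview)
Your proof is correct and rests on the same key observation as the paper's: every vertex of $\mathfrak c$ emits a cycle edge whose range lies again in $\mathfrak c^0$, so no cycle vertex can ever be added by the saturation rule. The paper packages this slightly differently, simply noting that $E^0\setminus\mathfrak c^0$ is itself saturated (hence $\overline X\subseteq E^0\setminus\mathfrak c^0$), but this is the same argument without the stage index.
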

\begin{proof}
By hypothesis, $X$ is a hereditary subset of $ E^0\setminus \mathfrak{c}^0$. So, it is enough to prove that $E^0\setminus \mathfrak{c}^0$ is saturated, since in this case $\overline{X}\subseteq E^0\setminus \mathfrak{c}^0$ (see the description of $\overline{X}$ give in (\ref{herclos1})). To check that $E^0\setminus \mathfrak{c}^0$ is saturated, let $v\in E^0$ be such that $r(s^{-1}(v))\subseteq E^0\setminus \mathfrak{c}^0$. Then, $v\in E^0\setminus \mathfrak{c}^0$ as well, since otherwise we have $r(s^{-1}(v))\cap  \mathfrak{c}^0 \neq \varnothing$. 
\end{proof}

Finally, we characterize the finite graphs \( E \) in which all $\mathbb{Z}$-order ideals of \( T_E \) are regular.

\begin{thm}\label{allregular}
    Let $E$ be a finite graph. Then, every $\mathbb Z$-order ideal of the talented monoid $T_E$ is regular if and only if all cycles of $E$ are extreme or have no exits.
\end{thm}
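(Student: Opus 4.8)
The plan is to translate the statement entirely into the combinatorics of hereditary saturated subsets of \(E^0\). By the lattice isomorphism \(\phi\colon \mathcal L(E)\to\mathcal L(T_E)\) together with Proposition~\ref{marabeboos} (which gives \(\langle H\rangle^{\perp\perp}=\langle H^{\perp\perp}\rangle\)), the \(\mathbb Z\)-order ideal \(\langle H\rangle\) is regular if and only if the subset \(H\) is regular. Hence it suffices to prove that every hereditary saturated \(H\subseteq E^0\) satisfies \(H^{\perp\perp}=H\) if and only if every cycle of \(E\) is extreme or has no exit. By Proposition~\ref{Avai} and Corollary~\ref{jambu}, \(H^{\perp\perp}=\{w\in E^0\mid T(w)\subseteq R(H)\}\), and since \(H\) is hereditary the inclusion \(H\subseteq H^{\perp\perp}\) is automatic. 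Thus regularity of \(H\) fails precisely when there exists \(w\notin H\) with \(T(w)\subseteq R(H)\). I will also use the characterization that a cycle \(\mathfrak c\) is extreme exactly when it has an exit and every vertex of \(T(\mathfrak c^0)\) can reach \(\mathfrak c^0\); this is immediate from the definition of extreme cycle together with the fact that the cycle vertices return to themselves.

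For the forward implication I argue by contraposition: assuming some \(H\) is not regular, there is \(w\notin H\) with \(T(w)\subseteq R(H)\), and I produce a cycle that has an exit but is not extreme. Set \(A:=T(w)\setminus H\), which is nonempty since \(w\in A\). First I would show \(A\) contains no sinks of \(E\): a sink \(u\in A\) would have \(T(u)=\{u\}\subseteq R(H)\), forcing \(u\in H\), a contradiction. Hence every \(u\in A\) is regular, and because \(H\) is saturated and \(u\notin H\), some edge out of \(u\) has range outside \(H\); this range lies in \(T(w)\setminus H=A\). Thus every vertex of the finite set \(A\) has a successor in \(A\), so following successors from \(w\) produces a cycle \(\mathfrak c\) with \(\mathfrak c^0\subseteq A\), in particular \(\mathfrak c^0\cap H=\varnothing\). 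Since \(\mathfrak c^0\subseteq R(H)\) but \(\mathfrak c\) is disjoint from \(H\), any path realizing \(c\geq v\in H\) (with \(c\in\mathfrak c^0\)) must leave the cycle, so \(\mathfrak c\) has an exit. If \(\mathfrak c\) were extreme, then from \(v\in H\cap T(\mathfrak c^0)\) one could return to \(\mathfrak c^0\); as \(H\) is hereditary this return would land in \(\mathfrak c^0\cap H=\varnothing\), a contradiction. Hence \(\mathfrak c\) has an exit and is not extreme.

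For the converse I start from a cycle \(\mathfrak c\) that has an exit but is not extreme, and build a non-regular \(H\). Let \(Y\) be the set of vertices of \(T(\mathfrak c^0)\) from which \(\mathfrak c^0\) cannot be reached. Then \(Y\) is hereditary, it is nonempty (non-extremality supplies an exiting path whose endpoint cannot return, and that endpoint lies in \(Y\)), and it is disjoint from \(\mathfrak c^0\). I set \(H:=\overline Y\); by Lemma~\ref{perghgyd}, \(\overline Y\) is still disjoint from \(\mathfrak c^0\). It remains to check that for every \(c\in\mathfrak c^0\) we have \(T(c)\subseteq R(H)\), which, together with \(c\notin H\), shows \(c\in H^{\perp\perp}\setminus H\) and hence that \(H\) is not regular. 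Given \(v\in T(c)\subseteq T(\mathfrak c^0)\), either \(v\) cannot reach \(\mathfrak c^0\), in which case \(v\in Y\subseteq H\subseteq R(H)\); or \(v\) reaches some \(c'\in\mathfrak c^0\), and then following the cycle to the non-returning exit carries \(c'\) into \(Y\subseteq H\), so again \(v\in R(H)\).

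The main obstacle is the forward direction: the delicate point is extracting, from the mere inclusion \(T(w)\subseteq R(H)\), a genuine cycle that is disjoint from \(H\) yet still reaches \(H\), which is exactly where finiteness of \(E\), the absence of sinks in \(A\), and crucially the saturation of \(H\) all enter. The secondary care point is interfacing the definition of extreme cycle with the hereditariness of \(H\), so that the guaranteed return forces a vertex of \(\mathfrak c^0\) into \(H\).
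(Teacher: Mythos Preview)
Your proof is correct. The converse direction (from a non-extreme cycle with exit to a non-regular $H$) is essentially identical to the paper's: your set $Y$ is the paper's $X$, and both take the saturated closure and invoke Lemma~\ref{perghgyd} and Proposition~\ref{Avai}.

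The direction ``all cycles extreme or without exit $\Rightarrow$ every $H$ regular'' is where you depart from the paper. The paper proves this implication \emph{directly}: given any hereditary saturated $H$, it uses Lemma~\ref{pesto} to write an arbitrary element of $T_E$ as a sum of shifted sinks and cycle-vertices, checks case by case that each such vertex lies in $H$ or in $H^\perp$, obtains $T_E=\langle H\rangle\oplus\langle H^\perp\rangle$, and concludes regularity via Proposition~\ref{orthoreg}. You instead argue the contrapositive entirely inside the graph: starting from $w\in H^{\perp\perp}\setminus H$, you use saturation to thread an infinite walk through $A=T(w)\setminus H$, extract a cycle disjoint from $H$ by finiteness, and show it is a cycle with exit that cannot be extreme. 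This is a cleaner, purely combinatorial argument that avoids Lemma~\ref{pesto} and the monoid-level Proposition~\ref{orthoreg}; the price is that you do not obtain the stronger conclusion $T_E=\langle H\rangle\oplus\langle H^\perp\rangle$, which the paper's route yields as a by-product under the hypothesis on cycles.
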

\begin{proof}
    First, suppose that all cycles of $E$ are extreme or  have no exits. Let $H$ be a hereditary saturated subset of $E^0$. We show that $T_E=\langle H \rangle\oplus \langle H^\perp \rangle$. Let $x\in E^0$. By Lemma~\ref{pesto}, we can represent $x$ as
    $$x=\sum_{i=1}^N v_i(n_i),$$
    where each $v_i$ is either a sink or belongs to $\mathfrak{c}^0$ for a cycle $\mathfrak{c}$. Notice that it is enough to prove that if $v_i\notin H$ then  $v_i\in H^\perp$. So, suppose that $v_i\notin H$.
    
    If $v_i$ is a sink which does not belong to $H$ then there does not exist any (nontrivial) path starting at $v_i$. Hence there is no path that connects $v_i$ to $H$ and so, by Definition~\ref{peropdef}, $v_i\in H^\perp$.

    If $v_i$ belongs to $\mathfrak{c}^0$, for a cycle $\mathfrak{c}$, then no element of $\mathfrak{c}^0$  may belong to $H$, as $H$ is hereditary (since otherwise $v_i\in H$). We now consider two cases:
    
    Firstly, suppose that $\mathfrak{c}$ has no exits. Then any path starting at $v_i$ ends at $\mathfrak{c}^0$, so they do not end at $H$. Thus, $v_i\in H^\perp$. Secondly, assume that $\mathfrak{c}$ is extreme and there is a path connecting $v_i$ to a point of $H$. Then, the extreme property allows us to extend this path to a return to $\mathfrak{c}$. So, using again the hereditary property of $H$, we obtain that $v_i\in H$, which is not the case.  We conclude that there is no path from $v_i$ to $H$ and hence $v_i\in H^\perp$.


    Thus, we have shown that any $x\in E^0$ can be represented as a sum of elements of $\langle H \rangle$ and of $\langle H^\perp \rangle$ and therefore  $T_E=\langle H \rangle\oplus \langle H^\perp \rangle$. By Proposition~\ref{orthoreg}, $\langle H \rangle $ is regular.

    Conversely, suppose that $E$ has a cycle $\mathfrak{c}$ with a finite path that exits the cycle and does not have a return. Let $X$ be the set of vertices $v\in E^0$ such that there exists a path from $\mathfrak{c}^0$ to $v$, but there does not exist any path from $v$ back to $\mathfrak{c}^0$. Then $X$ is hereditary and does not intersect $\mathfrak{c}^0$. Let $H$ be the saturated closure of $X$. Then $H$ is hereditary and saturated and, by Lemma~\ref{perghgyd}, also does not intersect $\mathfrak{c}^0$. We prove that $H$ is not regular.

    Indeed, since $X$ is nonempty, there is a path $\alpha$ with $v:=s(\alpha)\in\mathfrak{c}^0$ and $r(\alpha)\in X\subseteq H$ (in particular $v\not\in H$). We check that $v\in H^{\perp\perp}$ using Proposition \ref{Avai}. Indeed, take any path $\beta$ starting at $v$. There are two possibilities:

    \begin{enumerate}
        \item There is no return from $r(\beta)$ to $\mathfrak{c}$. In this case, $r(\beta)\in X$.
        \item There is a return from $r(\beta)$ to $\mathfrak{c}$, that is, we may extend $\beta$ to a path $\widetilde{\beta}$ such that $r(\widetilde{\beta})\in\mathfrak{c}^0$. Extending $\widetilde{\beta}$ further along $\mathfrak{c}$, we may assume that $r(\widetilde{\beta})=v$, and extending it along with $\alpha$ we obtain an extension $\widetilde{\beta}\alpha$ of $\beta$ which ends in $X$.
    \end{enumerate}

    In any case, any path starting at $v$ may be extended to a path ending in $X$. Proposition \ref{Avai} now implies that $v\in H^{\perp\perp}$. But $v\not\in H$. Therefore $H^{\perp\perp}\neq H$ and hence $H$ is not regular.\qedhere
\end{proof}

\begin{rmk}
Theorem~\ref{allregular} can be extended to arbitrary graphs (with infinite vertices and infinite emitters) parallel to the case of Leavitt path algebras, as it is done in \cite[Theorem 3.14]{Vas1}. 
\end{rmk}

\section{Connection with regular ideals of graph algebras}\label{secthree}

 We refer the reader to the books \cite{lpabook, raeburn} for the basics of the theory of Leavitt path algebras and graph $C^*$-algebras. For a directed graph $E$, we denote the Leavitt path algebra associated to $E$ with coefficient from a field $\K$ by $L_\K(E)$ and the graph $C^*$-algebra by $C^*(E)$.

Let $A$ be a $\K$-algebra.
For a subset $X \subseteq A$, we define the \emph{orthogonal set} to (or annihilator of)  $X$ as 
$$ X^\perp := \{a \in A \mid ax= xa = 0, \text{ for all }x\in X\}. $$
Let $J$ be an ideal of $A$. Clearly 
$J \subseteq J^{\perp \perp}$.  We call an ideal $J\subseteq A$ a \emph{regular ideal} if $J=J^{\perp \perp}$.

\begin{rmk}
    We have now defined orthogonality for subsets of graphs, monoids, and algebras. We shall relate them in the context of graph algebras. There shall be no ambiguity as the specific context will clarify which notion of orthogonality is being referred to.
\end{rmk}

\begin{deff}
Let $E$ be a graph
and $I$ be an ideal in the Leavitt path algebra $L_\K(E)$ (or $C^*(E))$.
Define
\[ H(I) := \{ v \in E^0 \colon v \in I \}. \]
\end{deff}

If $H \subseteq E^0$ is hereditary, then we denote by $I(H)$ the ideal in $L_\K(E)$ (or $C^*(E)$) generated by $H$, that is, 
\begin{align}\label{jacare}
 I(H)  &:=\spn\{\gamma \lambda^*\mid \gamma, \lambda\in E^{*} \text{ and } r(\gamma)=r(\lambda)\in H\} \subseteq L_\K(E)\\
 I(H)  &:=\overline{\spn}\{\gamma \lambda^*\mid \gamma, \lambda\in E^{*} \text{ and } r(\gamma)=r(\lambda)\in H\} \subseteq C^*(E),\notag
\end{align}
see \cite[Lemma~2.4.1]{lpabook} (and \cite{raeburn}).

The following result gives a precise description of the graded/gauge-invariant ideals of a graph algebra in terms of subsets of the vertex set $E^0$.

\begin{thm} {\cite[Theorem 2.5.9]{lpabook}, \cite{raeburn}}\label{peixevoador} 
Let $E$ be a row-finite graph. Then the map $J\mapsto J\cap E^0$  is a lattice isomorphism between the graded ideals of $L_\K(E)$ \textup{(}or gauge-invariant ideal of $C^*(E)$\textup{)} and the hereditary saturated subsets of $E^0$, with inverse $H\mapsto I(H) $.

\end{thm}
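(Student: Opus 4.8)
The plan is to verify that the two assignments $J \mapsto H(J) := J \cap E^0$ and $H \mapsto I(H)$ are well-defined maps between the stated lattices, that both are order-preserving, and that they are mutually inverse; the lattice isomorphism then follows formally. I would carry out the argument for $L_\K(E)$, the $C^*$-case being entirely parallel once graded ideals are replaced by gauge-invariant ideals and the $\mathbb{Z}$-grading by the gauge action.

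First I would check well-definedness. For an ideal $J$, the set $H(J)$ is hereditary and saturated directly from the Cuntz--Krieger relations, where $s_e, s_e^*$ denote the standard generators: if $v \in J$ and $s(e)=v$ then $r(e)=s_e^* v s_e \in J$, giving heredity, while if $v \in E^0_\reg$ and $r(s^{-1}(v)) \subseteq J$ then $v=\sum_{e \in s^{-1}(v)} s_e s_e^* \in J$, giving saturation. In the other direction, $I(H)$ is generated by the vertices in $H$, which are homogeneous of degree $0$ for the natural $\mathbb{Z}$-grading of $L_\K(E)$; since an ideal generated by homogeneous elements is graded, $I(H)$ is a graded ideal, and its span description~(\ref{jacare}) is recovered by expanding products $\gamma\lambda^*$ and using the relations.

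The core of the proof is that the two maps invert one another. The inclusion $H \subseteq I(H) \cap E^0$ is immediate, and $I(H(J)) \subseteq J$ holds because $H(J) \subseteq J$. For the reverse inclusions I would pass to quotients: using the isomorphism $L_\K(E)/I(H) \cong L_\K(E/H)$, the images of the vertices $v \notin H$ are the nonzero vertices of the quotient graph, which forces $I(H) \cap E^0 = H$. To obtain $J \subseteq I(H(J))$, set $H=H(J)$ and view $J/I(H)$ as a graded ideal of $L_\K(E/H)$ that by construction contains no vertex of $E/H$; the key lemma is that \emph{a nonzero graded ideal of a Leavitt path algebra must contain a vertex}, whence $J/I(H)=0$ and $J = I(H(J))$.

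The main obstacle is precisely this key lemma. The cleanest route is the Graded Uniqueness Theorem: a graded $\K$-algebra homomorphism out of $L_\K(E)$ that is nonzero on every vertex is injective. Applying it to the quotient map $L_\K(E) \to L_\K(E)/J$ shows that if $J$ contains no vertex then this map is injective, i.e. $J=0$; contrapositively, a nonzero graded ideal contains a vertex. Finally, both maps are visibly inclusion-preserving, so the mutually inverse order-preserving bijection is automatically a lattice isomorphism. For $C^*(E)$ one replaces the grading argument by the gauge-invariant uniqueness theorem, with the gauge action of the circle playing the role of the $\mathbb{Z}$-grading.
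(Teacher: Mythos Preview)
The paper does not supply its own proof of this theorem; it is quoted as a known result from \cite[Theorem~2.5.9]{lpabook} and \cite{raeburn}. Your proposal is correct and is precisely the standard argument found in those references: the Cuntz--Krieger relations give that $J\cap E^0$ is hereditary and saturated, the quotient description $L_\K(E)/I(H)\cong L_\K(E/H)$ yields $I(H)\cap E^0=H$, and the Graded Uniqueness Theorem (respectively, the Gauge-Invariant Uniqueness Theorem in the $C^*$-case) supplies the key step that a nonzero graded ideal must contain a vertex, forcing $J=I(H(J))$. There is nothing to compare against in the present paper beyond noting that your outline matches the cited sources.
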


\begin{prop}\label{bishap} Let $E$ be a row-finite graph and $J$ a graded ideal \textup{(}or gauge-invariant ideal\textup{)} of $L_\K(E)$ 
\textup{(}or $C^*(E)$\textup{)}.  Then, $H(J^\perp)=H(J)^\perp$.
\end{prop}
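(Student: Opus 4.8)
The plan is to reduce the identity to a vertex-by-vertex computation against the monomial spanning set of $J$. Since $J$ is graded (gauge-invariant), Theorem~\ref{peixevoador} lets me write $J=I(H)$, where $H:=H(J)=J\cap E^0$ is hereditary and saturated. By definition $H(J^\perp)=J^\perp\cap E^0$, and by Definition~\ref{peropdef} we have $H(J)^\perp=E^0\setminus R(H)$, i.e.\ $v\in H(J)^\perp$ exactly when no finite path starts at $v$ and ends in $H$. So the whole statement reduces to proving, for every vertex $v\in E^0$, the equivalence
\[
v\in J^\perp \iff \text{there is no finite path from } v \text{ to } H.
\]

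The heart of the argument is the implication ``no path from $v$ to $H$'' $\Rightarrow v\in J^\perp=I(H)^\perp$. Here I would use the spanning description $I(H)=\spn\{\gamma\lambda^*\mid \gamma,\lambda\in E^*,\ r(\gamma)=r(\lambda)\in H\}$ from \eqref{jacare} together with the standard relations $p_v\gamma=\delta_{v,s(\gamma)}\gamma$ and $\lambda^* p_v=\delta_{v,s(\lambda)}\lambda^*$, which give
\[
v\,\gamma\lambda^*=\delta_{v,s(\gamma)}\,\gamma\lambda^*\qquad\text{and}\qquad \gamma\lambda^*\,v=\delta_{v,s(\lambda)}\,\gamma\lambda^*.
\]
If no path from $v$ reaches $H$, then whenever $r(\gamma)=r(\lambda)\in H$ we must have $s(\gamma)\ne v$ and $s(\lambda)\ne v$; hence $v$ kills every spanning monomial on both sides, so $vI(H)=I(H)v=0$, i.e.\ $v\in J^\perp$. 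In the $C^*$-setting the same conclusion holds after passing to closed spans, since multiplication by the projection $p_v$ is continuous.

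For the converse I would argue by contraposition: if some path $\gamma$ starts at $v$ and ends at a vertex $w=r(\gamma)\in H$, then $w\in J$, so $\gamma=\gamma w\in J$ because $J$ is an ideal, while $v\gamma=\gamma\ne 0$ (paths are nonzero elements); thus $v\notin J^\perp$. Combining the two directions yields the displayed equivalence for all $v$, whence $H(J^\perp)=E^0\setminus R(H)=H(J)^\perp$. I expect the main (though still routine) obstacle to be the forward computation: one must verify the \emph{two-sided} annihilation $vx=xv=0$ on the spanning monomials, checking carefully that $p_v$ annihilates $\gamma\lambda^*$ both from the left (via $s(\gamma)$) and from the right (via $s(\lambda)$). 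The gradedness hypothesis is used precisely to guarantee $J=I(H)$, so that this spanning set exhausts all of $J$ and the vertex computation controls the full annihilator.
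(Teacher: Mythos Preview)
Your argument is correct. The paper's own proof simply cites external references (the proofs of \cite[Proposition~3.5(i)]{dd0} and \cite[Proposition~3.4(i)]{galera}) for the identity $H(J^\perp)=E^0\setminus R(H)$ and then notes that this equals $H(J)^\perp$ by Definition~\ref{peropdef}; it does not spell out the computation. Your self-contained argument using the spanning set \eqref{jacare} and the relations $p_v\gamma=\delta_{v,s(\gamma)}\gamma$, $\lambda^*p_v=\delta_{v,s(\lambda)}\lambda^*$ is precisely the underlying computation those references perform, so the route is essentially the same. The gain in your write-up is that it is self-contained within the present paper; the gain in the paper's approach is brevity by deferring to already-published work.
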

\begin{proof}
Set $J=I(H)$ for a hereditary and saturated subset $H$. By the proof of \cite[Proposition~3.5(i)]{dd0} (or \cite[Proposition~3.4(i)]{galera} for $C^*(E)$), we have that $H(J^\perp) =E^0\backslash R(H)= H(J)^\perp,$ where the last equality follows by definition.
\end{proof}

\begin{cor}\label{correspondence}
    Let $E$ be a row-finite graph. Let $T_E$ be the talented monoid, $L_\K(E)$ the Leavitt path algebra, and $C^*(E)$ the graph $C^*$-algebra associated with $E$, respectively. Then, there are one-to-one correspondences between the regular $\mathbb Z$-order ideals of $T_E$, the regular ideals of $L_\K(E)$, and the gauge invariant regular ideals of $C^*(E)$. 
\end{cor}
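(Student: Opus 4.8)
The plan is to show that each of the three families of regular ideals is in natural bijection with the lattice of \emph{regular} hereditary saturated subsets of $E^0$, and that these bijections are compatible with the respective orthogonality operations; the common indexing by regular subsets then yields the desired one-to-one correspondences directly.

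First I would record the purely formal fact that for any ideal $J$ (in a ring, a $C^*$-algebra, or a $\Gamma$-monoid) one has $J^\perp = J^{\perp\perp\perp}$, so that $J^\perp$ is always regular. Combined with the results recalled in the introduction (see \cite{dd0, galera})---namely that regular ideals of $L_\K(E)$ are graded and that regular ideals of $C^*(E)$ are gauge-invariant---this guarantees that both $J$ and $J^\perp$ fall within the scope of Theorem~\ref{peixevoador}. In particular, every regular ideal of $L_\K(E)$ is of the form $I(H)$ for a unique hereditary saturated subset $H = H(J)$, and its orthogonal $J^\perp$ is again graded.

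Next, for the Leavitt path algebra side, given a graded ideal $J = I(H)$, Proposition~\ref{bishap} gives $H(J^\perp) = H^\perp$; applying this twice (legitimate since $J^\perp$ is graded) yields $H(J^{\perp\perp}) = (H^\perp)^\perp = H^{\perp\perp}$. Because both $J$ and $J^{\perp\perp}$ are graded, the lattice isomorphism of Theorem~\ref{peixevoador} shows that $J = J^{\perp\perp}$ if and only if $H(J) = H(J^{\perp\perp})$, that is, if and only if $H = H^{\perp\perp}$, i.e. if and only if $H$ is regular. Hence $H \mapsto I(H)$ restricts to a bijection between the regular hereditary saturated subsets of $E^0$ and the regular ideals of $L_\K(E)$. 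The identical argument, using the $C^*$-version of Theorem~\ref{peixevoador} together with the $C^*$-case of Proposition~\ref{bishap}, produces a bijection between the regular hereditary saturated subsets and the gauge-invariant regular ideals of $C^*(E)$.

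For the monoid side, Proposition~\ref{marabeboos} gives $\langle H \rangle^\perp = \langle H^\perp \rangle$ in $T_E$, so the lattice isomorphism $\phi\colon \mathcal{L}(E) \to \mathcal{L}(T_E)$, $H \mapsto \langle H \rangle$, of (\ref{corpara}) intertwines the two orthogonality operations; as noted following Corollary~\ref{jambu}, this means $\phi$ restricts to a bijection between regular hereditary saturated subsets and regular $\mathbb{Z}$-order ideals of $T_E$. Composing the three bijections through their common index set---the regular hereditary saturated subsets of $E^0$---gives the asserted correspondences. The main obstacle is the verification that regularity of an ideal is equivalent to regularity of the associated subset $H$: this hinges on knowing that the orthogonal ideal stays within the graded/gauge-invariant (respectively $\mathbb{Z}$-order) lattice, so that Theorem~\ref{peixevoador} (respectively $\phi$) can be applied to $J^{\perp\perp}$, which is precisely what the formal identity $J^\perp = J^{\perp\perp\perp}$ together with the ``regular implies graded'' results secures.
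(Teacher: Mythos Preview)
Your proof is correct and follows essentially the same route as the paper: both reduce the correspondence to the common parametrization by regular hereditary saturated subsets via Theorem~\ref{peixevoador}, Proposition~\ref{bishap}, and Proposition~\ref{marabeboos}. You are in fact more careful than the paper on one point: the paper writes $I(H)^\perp=I(H^\perp)$ directly from Proposition~\ref{bishap}, whereas that proposition only yields $H(I(H)^\perp)=H^\perp$; your explicit observation that $J^\perp=J^{\perp\perp\perp}$ is regular (hence graded, by \cite{dd0,galera}) is exactly what is needed to close that gap and apply Theorem~\ref{peixevoador} to $J^\perp$.
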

{\begin{proof}
Let $H$ be a hereditary and saturated subset of $E$. Let $I(H)$ be the graded ideal of $L_\K(E)$ generated by $H$. By Proposition~\ref{bishap}, 
$I(H)^\perp=I(H^\perp)$. It follows that $I(H)$ is regular if and only if $H^{\perp \perp} = H$. 
Proposition~\ref{marabeboos}, implies a similar result for the order ideal $\langle H \rangle$ of $T_E$. Namely,  $\langle H \rangle$ is regular if and only if $H^{\perp \perp} = H$.

Since there are bijections between $\mathcal L(E)$, the lattice of hereditary saturated subsets of $E$,  $\mathcal L(L_\K(E))$, the lattice of graded ideals of $L_\K(E)$, and those of the talented monoid $T_E$:
    \begin{align*}
    \mathcal{L}(L_\K(E)) &\longrightarrow \mathcal{L}(E) \longrightarrow \mathcal{L}(T_E) \\
    I(H) &\longmapsto H \longmapsto \langle H \rangle,
\end{align*}
    the algebraic part of the corollary follow immediately. The argument for the $C^*$-version is similar. 
\end{proof}}

By combining Theorem~\ref{allregular} with Corollary~\ref{correspondence}, we can directly obtain \cite[Theorem 3.14]{Vas2} for finite graphs from our analysis of regular ideals in talented monoids.

\begin{thm}\label{theorem_char_of_reflexive} 
Let $E$ be a finite graph. Then every ideal of the Leavitt path algebra $L_\K(E)$ \textup{(}or every  ideal of $C^*(E)$\textup{)}  is regular if and only if all cycles of $E$ are extreme or have no exits. 
\end{thm}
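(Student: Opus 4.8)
The plan is to derive Theorem~\ref{theorem_char_of_reflexive} as a direct consequence of the machinery already established in the excerpt, rather than reproving anything from scratch. The key observation is that the condition characterizing the graphs---``all cycles of $E$ are extreme or have no exits''---is exactly the hypothesis appearing in Theorem~\ref{allregular}, which characterizes when every $\mathbb{Z}$-order ideal of the talented monoid $T_E$ is regular. So the task reduces to transferring the statement about regular ideals of $T_E$ into a statement about regular ideals of $L_\K(E)$ (and $C^*(E)$), and this transfer is precisely what Corollary~\ref{correspondence} and Theorem~\ref{peixevoador} are designed to provide.

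First I would recall the lattice isomorphism of Theorem~\ref{peixevoador}: for a row-finite graph (in particular a finite graph) the map $J\mapsto J\cap E^0 = H(J)$ is a lattice isomorphism between the graded ideals of $L_\K(E)$ (resp.\ gauge-invariant ideals of $C^*(E)$) and the hereditary saturated subsets $\mathcal{L}(E)$, with inverse $H\mapsto I(H)$. Next I would invoke Proposition~\ref{bishap}, which gives $H(J^\perp)=H(J)^\perp$, together with the established fact (from the proof of Corollary~\ref{correspondence}) that $I(H)^\perp=I(H^\perp)$. Combining these, a graded ideal $I(H)$ is regular in $L_\K(E)$ if and only if $H^{\perp\perp}=H$, i.e., $H$ is a regular hereditary saturated subset. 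The same equivalence holds on the monoid side by Proposition~\ref{marabeboos}: $\langle H\rangle$ is regular in $T_E$ if and only if $H^{\perp\perp}=H$.

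The one genuinely nontrivial point to address is that the theorem speaks of \emph{every} ideal being regular, not merely every graded ideal. Here I would use the known fact (recorded in the introduction and the references \cite{galera, dd0, Vas1, Vas2}) that in a Leavitt path algebra (resp.\ graph $C^*$-algebra) every regular ideal is automatically graded (resp.\ gauge-invariant). Consequently, the condition ``every ideal of $L_\K(E)$ is regular'' forces every ideal to be graded, and the regular ideals are exactly the regular graded ideals; thus ``every ideal is regular'' is equivalent to ``every graded ideal is regular'', which by the correspondence above is equivalent to ``every $\mathbb{Z}$-order ideal of $T_E$ is regular''. I expect this reduction---carefully arguing that requiring all ideals to be regular is the same as requiring all graded ideals to be regular---to be the main conceptual obstacle, since it relies on importing the external result that regular ideals are graded.

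With that reduction in hand, the conclusion is immediate: by Theorem~\ref{allregular}, every $\mathbb{Z}$-order ideal of $T_E$ is regular if and only if all cycles of $E$ are extreme or have no exits. Transporting this equivalence through the lattice isomorphisms $\mathcal{L}(L_\K(E))\cong\mathcal{L}(E)\cong\mathcal{L}(T_E)$ of Corollary~\ref{correspondence}---under which regular graded ideals, regular hereditary saturated subsets, and regular $\mathbb{Z}$-order ideals all correspond---yields that every ideal of $L_\K(E)$ (resp.\ $C^*(E)$) is regular if and only if all cycles of $E$ are extreme or have no exits. The $C^*$-argument runs identically, using the gauge-invariant version of Theorem~\ref{peixevoador} and Proposition~\ref{bishap}, so I would simply remark that the two cases are parallel and handle them simultaneously.
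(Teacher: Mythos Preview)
Your overall strategy matches the paper's: reduce to Theorem~\ref{allregular} via Corollary~\ref{correspondence}. But the extra step you insert---claiming that ``every ideal is regular'' is equivalent to ``every graded ideal is regular''---is only half right. The forward implication is fine: if every ideal is regular then, since regular ideals are graded, every ideal is graded and in particular every graded ideal is regular. The backward implication, however, does not follow from the fact that regular ideals are graded. Concretely, let $E$ be a single vertex with one loop. The unique cycle has no exit, so the graph condition holds, and the only graded ideals of $L_\K(E)\cong\K[x,x^{-1}]$ are $0$ and the whole ring, both regular. Yet the non-graded ideal $(x-1)$ satisfies $(x-1)^\perp=0$ and hence $(x-1)^{\perp\perp}=\K[x,x^{-1}]\neq(x-1)$, so it is not regular. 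The analogous failure occurs in $C^*(E)\cong C(\mathbb{T})$ with the maximal ideal at a point.

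Thus the direction ``all cycles extreme or without exits $\Rightarrow$ every ideal of $L_\K(E)$ is regular'' cannot be obtained by your reduction, and in fact appears to be false as stated. The paper's own justification is the single sentence ``combine Theorem~\ref{allregular} with Corollary~\ref{correspondence}'', which carries exactly the same gap: what that combination genuinely yields is the statement with ``every \emph{graded} ideal'' (respectively ``every \emph{gauge-invariant} ideal'') in place of ``every ideal''. Your argument is clean once the theorem is read that way; for the statement as written, the backward implication needs an independent argument that you have not supplied (and that the cited machinery does not provide).
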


Recall that the Graded Classification Conjecture~\cite{willie} is as follows:

\begin{conj}\label{alergy1}
    Let \(E\) and \(F\) be finite graphs.    Then, the following are equivalent:
    \begin{enumerate}[\upshape(1)]

     \item The talented monoids \(T_E\) and \(T_F\) are \(\mathbb{Z}\)-isomorphic;

     \item The Leavitt path algebras \(L_\K(E)\) and \(L_\K(F)\) are graded Morita equivalent;
 
     \item The graph $C^*$-algebras \(C^*(E)\) and \(C^*(F)\) are equivariant Morita equivalent;
       
        \item The adjacency matrices  \(A_E\) and \(A_F\) are shift equivalent \textup{(}in the case of graphs with no sinks\textup{)}.
            
    \end{enumerate}
       \end{conj}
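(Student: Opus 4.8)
The plan is to prove the four statements equivalent through a cycle of implications, separating the \emph{functorial} directions—in which one transports the graded $K$-theoretic data along an equivalence—from the single genuinely hard \emph{reconstruction} direction $(1)\Rightarrow(2)$ (and its $C^*$-analogue $(1)\Rightarrow(3)$), which is the actual content of the conjecture and which, in full generality, remains open. Throughout I would exploit the identification recalled in the introduction of $T_E$ with the positive cone of the ordered $\mathbb{Z}[x,x^{-1}]$-module $K_0^{\gr}(L_\K(E))$, so that a $\mathbb{Z}$-monoid isomorphism $T_E\cong T_F$ is precisely an order- and $\mathbb{Z}$-action-preserving isomorphism of graded Grothendieck groups. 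The architecture is then $(2)\Rightarrow(1)$, $(3)\Rightarrow(1)$, $(1)\Leftrightarrow(4)$, and finally $(1)\Rightarrow(2)$, $(1)\Rightarrow(3)$.

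For the functorial implications I would argue as follows. For $(2)\Rightarrow(1)$, a graded Morita equivalence induces an equivalence of the graded module categories, and $K_0^{\gr}$ is a Morita-invariant functor; it therefore yields an isomorphism $K_0^{\gr}(L_\K(E))\cong K_0^{\gr}(L_\K(F))$ respecting both the $\mathbb{Z}$-action coming from the grading and the order given by classes of graded finitely generated projectives. Passing to positive cones gives $T_E\cong T_F$ as $\mathbb{Z}$-monoids. For $(3)\Rightarrow(1)$ I would run the same argument with equivariant $K$-theory of $C^*(E)$ for the gauge action of the circle, using that the equivariant $K_0$ (equivalently, the $K_0$ of the crossed product $C^*(E)\rtimes\mathbb{T}$) recovers the same ordered $\mathbb{Z}$-module, hence $T_E$.

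For $(1)\Leftrightarrow(4)$ in the no-sinks case I would first compute $K_0^{\gr}(L_\K(E))$, together with its shift automorphism, as the direct limit of the system $\mathbb{Z}^{E^0}\xrightarrow{A_E^{t}}\mathbb{Z}^{E^0}\xrightarrow{A_E^{t}}\cdots$ carrying the natural order, so that the pair $\bigl(T_E,\text{shift}\bigr)$ is exactly the ordered dimension module of the subshift of finite type defined by $A_E$. The equivalence with shift equivalence of $A_E$ and $A_F$ is then an instance of the algebraic Krieger-type theorem: shift equivalence of two nonnegative integer matrices is equivalent to an order- and automorphism-preserving isomorphism of their associated dimension modules. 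This reduces $(1)\Leftrightarrow(4)$ to a statement in symbolic dynamics already available in the literature.

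The hard part will be $(1)\Rightarrow(2)$ (and $(1)\Rightarrow(3)$). The natural strategy is to realize the abstract isomorphism $T_E\cong T_F$ by a finite sequence of \emph{graded graph moves}—in-splitting, out-splitting, source elimination, and their inverses—each of which is known to preserve graded Morita equivalence of the Leavitt path algebras (and equivariant Morita equivalence of the $C^*$-algebras). The obstacle is exactly the gap between the \emph{invariant} and the \emph{moves}: a $\mathbb{Z}$-monoid isomorphism encodes only \emph{shift equivalence} of the adjacency data, whereas the graph moves realize \emph{strong shift equivalence}, and showing that shift equivalence forces strong shift equivalence in this ordered, graded setting is a Williams-type problem whose resolution is not known in general. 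Accordingly, a complete proof is not to be expected here; the results of this paper—in particular the lattice isomorphism of regular ideals obtained from a $\mathbb{Z}$-monoid isomorphism $T_E\cong T_F$—should instead be read as further evidence that the invariant $T_E$ is fine enough to carry the classification, consistent with the conjecture.
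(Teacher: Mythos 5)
The first thing to notice is that the statement you were given is labelled a \emph{Conjecture} in the paper, and the paper contains no proof of it: it is quoted from the literature (\cite{willie}) precisely as the open Graded Classification Conjecture, and the paper's own contribution is presented only as \emph{evidence} for it (namely, that a $\mathbb{Z}$-monoid isomorphism $T_E\cong T_F$ induces a one-to-one correspondence of regular ideals of the graph algebras). Your proposal correctly recognizes this, so there is no paper proof to compare against; what can be assessed is your survey of the status, and that survey is essentially accurate. The implications $(2)\Rightarrow(1)$ and $(3)\Rightarrow(1)$ do follow functorially as you say: a graded Morita equivalence induces a shift- and order-preserving isomorphism of $K_0^{\gr}$, whose positive cone is $T_E$, and gauge-equivariant $K$-theory (equivalently $K_0$ of the crossed product by the circle) recovers the same ordered $\mathbb{Z}[x,x^{-1}]$-module. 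Likewise $(1)\Leftrightarrow(4)$ for finite graphs with no sinks is a known Krieger-type theorem identifying $\bigl(T_E,\text{shift}\bigr)$ with the ordered dimension module of the direct system $\mathbb{Z}^{E^0}\xrightarrow{A_E^{t}}\mathbb{Z}^{E^0}\xrightarrow{A_E^{t}}\cdots$, under which $\mathbb{Z}$-monoid isomorphism corresponds exactly to shift equivalence. And you are right that $(1)\Rightarrow(2)$, $(1)\Rightarrow(3)$ constitute the genuinely open content, which is exactly the paper's own position.

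One substantive correction to your discussion of the hard direction: you describe the gap between shift equivalence and strong shift equivalence as ``a Williams-type problem whose resolution is not known in general.'' In fact Williams' problem is resolved \emph{negatively}: Kim and Roush produced nonnegative integral matrices that are shift equivalent but not strongly shift equivalent, so your proposed strategy of realizing the $\mathbb{Z}$-monoid isomorphism by a finite sequence of graph moves cannot succeed in general, since the moves realize precisely strong shift equivalence. The open question is therefore not whether shift equivalence forces strong shift equivalence (it does not) but whether graded Morita equivalence is a strictly weaker relation than strong shift equivalence — weak enough to be implied by shift equivalence alone; any eventual proof of $(1)\Rightarrow(2)$ must exploit that gap rather than close it, and the positive results known so far (e.g.\ for special classes of graphs, cf.\ \cite{meteorgraphs}) proceed by verifying the stronger relation in situations where the Kim--Roush obstruction does not arise. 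With that caveat, your conclusion — that the paper's regular-ideal correspondence should be read as supporting evidence rather than as a step in a proof — matches the paper exactly.
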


We can provide further evidence for the Graded Classification Conjecture.  

\begin{thm}
Let $E$ and $F$ be row-finite graphs. If the talented monoids \(T_E\) and \(T_F\) are \(\mathbb{Z}\)-isomorphic then there is one-to-one correspondence between regular ideals of $L_\K(E)$ and $L_\K(F)$ \textup{(}and similarly for $C^*(E)$ and $C^*(F)$\textup{)}. 
\end{thm}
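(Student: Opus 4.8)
The plan is to build the one-to-one correspondence between regular ideals by composing the correspondence already established at the level of talented monoids (Corollary~\ref{correspondence}) with the hypothesized $\mathbb{Z}$-isomorphism $\phi\colon T_E \to T_F$. The strategy rests on two facts proved earlier: first, by Corollary~\ref{correspondence}, for each graph the regular $\mathbb{Z}$-order ideals of the talented monoid are in bijection with the regular (graded) ideals of the Leavitt path algebra (and, separately, with the gauge-invariant regular ideals of the $C^*$-algebra); second, by the remark following the definition of $I^\perp$ in Section~\ref{sectwo}, a $\mathbb{Z}$-isomorphism of $\Gamma$-monoids preserves orthogonal ideals, i.e.\ $\phi(I^\perp)=\phi(I)^\perp$ for every $\mathbb{Z}$-order ideal $I$ of $T_E$.

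First I would observe that $\phi$ induces an isomorphism of the lattices $\mathcal{L}(T_E)\cong \mathcal{L}(T_F)$ of $\mathbb{Z}$-order ideals, simply because $\phi$ is an order-preserving bijection respecting the $\mathbb{Z}$-action, and its inverse is as well. Next I would show this lattice isomorphism restricts to a bijection between \emph{regular} $\mathbb{Z}$-order ideals. This is where the orthogonality-preservation identity does the work: since $\phi(I^{\perp\perp})=\phi(I^\perp)^\perp=\phi(I)^{\perp\perp}$, we have that $I^{\perp\perp}=I$ holds in $T_E$ if and only if $\phi(I)^{\perp\perp}=\phi(I)$ holds in $T_F$. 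Thus $\phi$ maps regular ideals to regular ideals and likewise for $\phi^{-1}$, yielding a bijection between the regular $\mathbb{Z}$-order ideals of $T_E$ and those of $T_F$.

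Finally I would assemble the diagram. Let $\rho_E$ denote the bijection from Corollary~\ref{correspondence} between regular $\mathbb{Z}$-order ideals of $T_E$ and regular ideals of $L_\K(E)$, and similarly $\rho_F$ for $F$. Then the composite
\[
\rho_F \circ \phi|_{\mathrm{reg}} \circ \rho_E^{-1}
\]
is the desired bijection between the regular ideals of $L_\K(E)$ and $L_\K(F)$. The identical composite using the $C^*$-versions of the bijections in Corollary~\ref{correspondence} gives the statement for $C^*(E)$ and $C^*(F)$.

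I do not expect a serious obstacle here, as both ingredients are already in hand; the only point requiring a word of care is confirming that a $\mathbb{Z}$-monoid isomorphism indeed preserves orthogonal complements. That, however, is immediate: $\phi$ preserves the algebraic order (it is a monoid isomorphism), hence it preserves the incomparability relation $\parallel$, and since $I^\perp$ is defined purely in terms of $\parallel$ together with $0$, the identity $\phi(I^\perp)=\phi(I)^\perp$ follows, exactly as recorded in Section~\ref{sectwo}. The main conceptual content is therefore not in this theorem itself but in the earlier identifications $\langle H\rangle^\perp=\langle H^\perp\rangle$ and $I(H)^\perp=I(H^\perp)$ that make Corollary~\ref{correspondence} available.
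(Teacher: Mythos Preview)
Your proposal is correct and follows essentially the same approach as the paper: use the fact that a $\mathbb{Z}$-monoid isomorphism $\phi$ preserves orthogonal complements (so $\phi(I^{\perp\perp})=\phi(I)^{\perp\perp}$ and regular ideals correspond), then compose with the bijections of Corollary~\ref{correspondence}. The paper's proof is more terse, but the ingredients and logic are identical.
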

\begin{proof}
First note that if $\theta:T_E\rightarrow T_F$ is a $\mathbb Z$-monoid isomorphism, then 
for any \(\mathbb{Z}\)-order ideal $I$ of $T_E$, 
$\theta(I)$ is an \(\mathbb{Z}\)-order ideal of $T_F$ and  $\theta(I)^\perp=\theta(I^\perp)$. This implies that there is a one-to-one correspondence between regular ideals of $T_E$ and $T_F$. The theorem now follows from Corollary~\ref{correspondence}. 
 \end{proof}

\begin{rmk}\label{parastoo}

The internal structure of a talented monoid $T_E$ seems to be  rather simpler than that of the associated Leavitt path algebra $L_\K(E)$. For example, for a hereditary and saturated subset $H\subseteq E$, the order ideal $\langle H \rangle \subseteq T_E$ is $\mathbb Z$-isomorphic to the talented monoid $T_H$, whereas for the Leavitt path algebra $L_\K(E)$, in order to realize the graded ideal $I(H)\subseteq L_\K(E)$ as a Leavitt path algebra, one needs to define a so-called porcupine graph~(see \cite{Vas3}).

\end{rmk}

\section{Geometric classification of indecomposable and simple order ideals of a talented monoid}\label{lastsecui}

Let $E$ be a finite graph. Recall the definitions of various types of cycles from \S\ref{graphsdeffff}. Similarly to what is done in \cite{Luiz}, let us denote
\begin{itemize}
    \item $P_s(E)$ the set of sinks of $E$.
    \item $P_{cc}(E)$ the set of vertices of $E$ which belong to cycles without exits.
    \item $P_{ec}(E)$ the set of vertices of $E$ which belong to extreme cycles.
    \item $P_{wc}(E)$ the set of vertices of $E$ which belong to cycles which have an exit without a return.
  \item $P_{mc}(E)$ the set of vertices of $E$ which belong to cycles with no return exit.
\end{itemize}

Clearly $P_{mc}(E) \subseteq P_{wc}(E)$. Let $P(E)$ be the union of the above sets, that is, the set of all sinks and vertices in the cycles of $E$. On $P(E)$, the binary relation
$\geq$ is a preorder. So, it induces an equivalence relation
$$v\sim w\iff v\geq w\text{ and }w\geq v$$
and a partial order on the quotient,  $P(E)/\!\sim$, as
\begin{equation}\label{pfjfhyrrrr}
[v]\geq[w]\iff v\geq w,    
\end{equation}
where $[v]$ denotes the $\sim$-equivalence class of $v$. 

Note that:
\begin{itemize}
    \item If $\mathfrak{c}$ is a cycle, then all the vertices of $\mathfrak{c}$ have the same $\sim$-equivalence class.
    \item If $v\sim w$, then either $v=w$, or there exists a cycle which contains both $v$ and $w$.
    \item $[v]$ is the intersection of the strongly connected component  of $v$ with $P(E)$.
    \item $P_s(E)$, $P_{cc}(E)$, $P_{ec}(E)$ and $P_{wc}(E)$ are $\sim$-invariant, in the sense that if $v$ belongs to any of these sets then the whole class $[v]$ is contained in that same set.
\end{itemize}
The first two observations, in particular, show that $P(E)/\!\sim$ can be thought as the ``collection of disjoint cycles of $E$''. We can thus use these elements to characterize the indecomposable ideals of the talented monoid $T_E$.

 Given any $\mathbb Z$-order ideal $I$ of $T_E$, let
    $$C(I)=\big \{[v] \mid v\in I \text{ and }  v\in P(E) \big \},$$ and $\max(C(I))$ be the set of maximal elements of $C(I)$ with respect to the ordering~(\ref{pfjfhyrrrr}).

\begin{thm}\label{lunhdfgdt}
    Given a finite graph $E$, and an element $[v] \in P(E)/\!\sim$, let $\langle v \rangle $ be the $\mathbb{Z}$-order ideal in $T_E$ generated by $v$. Then, 
   
    \begin{enumerate}[\upshape(1)]

        \item\label{it:independencepathconnectednessideals}
            $\langle v \rangle = \langle w \rangle $ if and only if  $[v]=[w]$ in $P(E)/\!\sim$.
        
        \item
            $\langle v \rangle $  is indecomposable as a $\mathbb{Z}$-order ideal.
        
        \item\label{it:conditionforsimplicity}
            $\langle v \rangle $  is simple as a $\mathbb{Z}$-order ideal if and only if  $v$ is a sink, or it belong to a cycle without exits or to an extreme cycle.
   
        \item\label{it:isod}
            For any $\mathbb Z$-order ideal $I$ of $T_E$, we have the decomposition $$I=\sum_{[v]\in \max(C(I))}\langle v \rangle.$$
\comment{Furthermore, if for any $w\in I$, there are finite number paths connecting $w$ to a representative set of $\{[v]\in \max(C(I))\}$ then we have 
            $$I=\bigoplus_{[v]\in \max(C(I))}\langle v \rangle.$$}

    \end{enumerate}
\end{thm}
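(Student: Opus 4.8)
The plan is to reduce the whole theorem to the lattice isomorphism $H\mapsto\langle H\rangle$ of \eqref{corpara} between hereditary saturated subsets and $\mathbb{Z}$-order ideals, anchored by two observations. First, since $T(v)$ is the hereditary closure of $\{v\}$, the ideal $\langle v\rangle$ coincides with $\langle \overline{T(v)}\rangle$, so it corresponds to the hereditary saturated set $\overline{T(v)}$. Second, I would isolate the key auxiliary claim that a vertex $w\in P(E)$ lies in $\overline{K}$ (for $K$ hereditary) if and only if $w\in K$: for a sink this holds because saturation only adjoins regular vertices, while for a vertex on a cycle $\mathfrak c$ one notes that $w\notin K$ forces $\mathfrak{c}^0\cap K=\varnothing$ (as $K$ is hereditary), after which traversing $\mathfrak c$ before following any path that flows to $K$ manufactures infinitely many such paths, contradicting Lemma~\ref{gfhfhfh22}. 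Part~(1) then follows at once: if $[v]=[w]$ then $v\geq w$ and $w\geq v$ give $T(v)=T(w)$, hence $\langle v\rangle=\langle w\rangle$; conversely $\langle v\rangle=\langle w\rangle$ means $\overline{T(v)}=\overline{T(w)}$, so $v\in\overline{T(w)}$, and the auxiliary claim (with $K=T(w)$) yields $v\in T(w)$, i.e.\ $w\geq v$; by symmetry $v\geq w$, so $[v]=[w]$.

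For part~(2), suppose $\langle v\rangle=A\oplus B$. Transporting through the lattice isomorphism (and using Proposition~\ref{finalbrushes} together with preservation of meets and joins) gives disjoint hereditary saturated sets $H_A,H_B$ with $\overline{H_A\cup H_B}=\overline{T(v)}$. Since $v\in P(E)$ and $v\in\overline{H_A\cup H_B}$ with $H_A\cup H_B$ hereditary, the auxiliary claim places $v$ in $H_A\cup H_B$, say $v\in H_A$. Then $T(v)\subseteq H_A$ by heredity and $\overline{T(v)}\subseteq H_A$ by saturation, forcing $H_A=\overline{T(v)}$ and hence $H_B=\varnothing$, i.e.\ $B=\{0\}$. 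Thus $\langle v\rangle$ is indecomposable.

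For part~(3), I would first reduce simplicity to a statement about $P(E)$-classes. Any nonzero $\mathbb{Z}$-order ideal $J\subseteq\langle v\rangle$ contains a nonzero element, which by Lemma~\ref{pesto} is a sum of terms $w(n)$ with $w\in P(E)$; each such $w$ lies in $J\cap P(E)\subseteq\overline{T(v)}$ with $\langle w\rangle\subseteq J$. Hence, invoking part~(1), $\langle v\rangle$ is simple if and only if every $w\in P(E)\cap\overline{T(v)}$ satisfies $[w]=[v]$, and by the auxiliary claim $P(E)\cap\overline{T(v)}=P(E)\cap T(v)$. If $v$ is a sink then $T(v)=\{v\}$, and if $v$ lies on a cycle without exits then $T(v)=\mathfrak{c}^0$; in both cases every $P(E)$-vertex of $T(v)$ is $\sim v$. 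If $v$ lies on an extreme cycle $\mathfrak c$, then every path leaving $\mathfrak c$ admits a return, so each $w\in P(E)\cap T(v)$ reaches $\mathfrak{c}^0$ and hence $w\geq v$, giving $w\sim v$. Conversely, if $v$ lies on a cycle with an exit that never returns, follow such an exit to a vertex $u$ with $T(u)\cap\mathfrak{c}^0=\varnothing$ and pick $w\in P(E)\cap T(u)$; then $w\in T(v)$ but $w\not\geq v$, so $[w]\neq[v]$ and $\langle w\rangle\subsetneq\langle v\rangle$ is a proper nonzero subideal, so $\langle v\rangle$ is not simple.

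For part~(4), the inclusion $\sum_{[v]\in\max(C(I))}\langle v\rangle\subseteq I$ is clear, as each such $v\in I$. For the reverse, take $x\in I$ and write $x=\sum_j w_j(n_j)$ with $w_j\in P(E)$ via Lemma~\ref{pesto}; each summand lies below $x$, so $w_j\in I\cap P(E)$ and $[w_j]\in C(I)$. Since $E$ is finite, $C(I)$ is a finite poset, so each $[w_j]\leq[v]$ for some $[v]\in\max(C(I))$; then $v\geq w_j$ gives $w_j\in T(v)\subseteq\langle v\rangle$, whence $w_j(n_j)\in\langle v\rangle$ by $\mathbb{Z}$-invariance, and the well-definedness of $\langle v\rangle$ on classes is guaranteed by part~(1). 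Thus $x\in\sum_{[v]\in\max(C(I))}\langle v\rangle$, giving equality. The main obstacle throughout is the auxiliary saturation-stability claim for $P(E)$-vertices, especially its cycle case (the infinite-path argument feeding Lemma~\ref{gfhfhfh22}), since it underpins parts~(1)--(3); a secondary delicate point is the extreme-cycle verification in part~(3), where the return property must be used to confine every reachable $P(E)$-vertex to the class $[v]$.
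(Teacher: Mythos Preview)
Your argument is correct and takes a genuinely different route from the paper's. The paper works directly inside the monoid via the Confluence Lemma: for part~(2) it writes $v=j+k$ with $j\in J$, $k\in K$ and decomposes $v\to x+y$ to find a cycle vertex (or the sink $v$ itself) among the constituents of $x$ or $y$; part~(3) is handled by a similar Confluence argument tracking constituents of an arbitrary element of a subideal. You instead transport everything through the lattice isomorphism $\mathcal{L}(E)\cong\mathcal{L}(T_E)$ and isolate a single graph-theoretic fact---that vertices in $P(E)$ are never added by saturation---from which parts~(1)--(3) fall out mechanically. This is arguably cleaner, since it pinpoints exactly the combinatorial input needed and avoids repeated Confluence manipulations; the paper's approach, on the other hand, is more self-contained in that it does not lean on the path-counting Lemma~\ref{gfhfhfh22}. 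One small remark: your cycle case of the auxiliary claim is precisely Lemma~\ref{perghgyd} (if $K$ is hereditary and misses $\mathfrak{c}^0$ then so does $\overline{K}$), so you could cite that directly rather than route through the finiteness of flowing paths in Lemma~\ref{gfhfhfh22}. Part~(4) is essentially identical in both treatments.
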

\begin{proof}
    \begin{enumerate}[\upshape(1)]
        \item
            On one hand, observe that if $v\geq w$ then $\langle w \rangle \subseteq \langle v \rangle $ (since we are dealing with $\mathbb{Z}$-invariant order ideals). Conversely, a straightforward application of the Confluence Lemma~\ref{confuu}  and the description of $\langle v\rangle$ as given in (\ref{idealorderandnotorder})   yield the opposite implication. 
        \item
            Let $v\in P(E)$, and suppose that $\langle v \rangle =J\oplus K$, where $J$ and $K$ are $\mathbb{Z}$-order ideals of $T_E$. We will prove that $v\in J$ and thus $K=0$, or vice-versa. If $v \not \in J\cup K$, then  we may write $v=j+k$ for certain elements $j\in J$ and $k\in K$. By the Confluence Lemma, we may find $x,y$ in $F_{\overline{E}}$ such that $v\to x+y$, $j\to x$ and $k\to y$. Now, as $v\in P(E)$, there are two possibilities to consider.
        
            First, if $v$ is a sink, the only possibility is that $x=v$ and $y=0$ or vice-versa, in which case $v\in J$ or $v\in K$, as we wanted.
        
            Otherwise, $v$ lies in a cycle $\mathfrak{c}$ of $E$, so at least one among $x$ or $y$ (as an element of $F_{\overline{E}}$) will have a shift of a vertex $v'$ in $\mathfrak{c}$ as one of its summands. By symmetry of the argument, we may assume that this is so for $x$. Thus, $x\geq{}^nv'$ for some $n$. Since we are dealing with $\mathbb{Z}$-order ideals, we obtain that $v'\in J$, and so $\langle v' \rangle \subseteq J$. But $v\sim v'$, as both of these vertices lie in the cycle $\mathfrak{c}$, and so item (\ref{it:independencepathconnectednessideals}) yields $v\in \langle v \rangle =\langle v' \rangle \subseteq J$, again as we wanted.
            
            Therefore, $\langle v\rangle $ is indecomposable for any $v\in P(E)$. 
      
        \item
            First note that the equivalence of any element of $[v]$ being a sink, or belonging to a cycle without exits, or to an extreme cycle to the respective property of $v$, follows from $P_s(E)$, or $P_{cc}(E)$ or $P_{ec}(E)$ being $\sim$-invariant.
    
            Suppose that $v$ is a sink or that it belongs to a cycle without exits or to an extreme cycle. Let $J$ be a non-trivial $\mathbb{Z}$-order ideal of $\langle v \rangle$. Recall from (\ref{idealorderandnotorder}) that $\langle v \rangle $ consists of all $x\in T_E$ such that $x\leq\sum_{i=1}^k {}^{n_i}v$, for some   $n_i\in\mathbb{Z}$. Consider any nonzero element $x$ of $J$, so that $x\leq\sum_{i=1}^k {}^{n_i}v$ for certain $n_i$. By the Confluence Lemma~\ref{confuu}, we find $y,z\in F_{\overline{E}}$ such that $\sum_{i=1}^k {}^{n_i}v\to y+z$ and $x\to y$. As a term of $F_{\overline{E}}$, $y$ is a sum of shifts of vertices of $E$, and it is nontrivial as $x=y$ in $T_E$. Take any of these terms ${}^m w$. Then $x\geq {}^m w$ in $T_E$, so $w\in J$ as $J$ is a $\mathbb{Z}$-order ideal. Moreover, since $\sum_{i=1}^k {}^{n_i}v\to y+z$ and ${}^m w$ is a constituent of $y$, we obtain from the the definition of $\to$ that there is a path from $v$ to $w$. However, the given conditions on $v$ imply that there is a path from $w$ to $v$ as well:
            \begin{itemize}
                \item If $v$ is a sink then the only possibility is that $w=v$.
                \item If $v$ belongs to a cycle without exits, then the only possibility is that $w$ belongs to that same cycle, so just moving around this cycle gives us a path from $w$ to $v$.
                \item If $v$ belongs to an extreme cycle, then either $w$ already belongs to that cycle and we proceed as in the previous case, or it belongs to a path that exits this cycle,  and thus there is a return to $v$ by the extreme property.
            \end{itemize}
            Therefore, $w\sim v$ and hence $\langle v\rangle =\langle w\rangle $ by item~(\ref{it:independencepathconnectednessideals}). Since $w\in J$, we conclude that $\langle v \rangle \subseteq J$. Thus, $\langle v\rangle $ contains no nontrivial proper ideals.
    
            Conversely, suppose that $v$ does not have any of the properties given in the statement of this item. This means that $v$ belongs to a cycle $\mathfrak{c}$ which has an exit $\alpha$ with no return. Extending $\alpha$ as necessary (using the finiteness of $E^0$), we may assume that $w:=r(\alpha)$ is a sink or that it belongs to a cycle. Thus, $\langle w \rangle $ is a nontrivial ideal in $\langle v \rangle $. We prove that it is also proper, that is, that $v\not\in \langle w \rangle $.
            
            Indeed, the same type of argument as done in the other direction would tell us that, if $v$ were an element of $\langle w \rangle $ then there would be $x,y\in F_{\overline{E}}$ and integers $n_i$ such that $v\to x$ and $\sum_{i=1}^k {}^{n_i}w \to x+y$. However, as $v$ belongs to the cycle $\mathfrak{c}$, the definition of $\to$ implies that, as element of $F_{\overline{E}}$, $x$ has a term of the form ${}^m z$ for some vertex $z$ in $\mathfrak{c}$. Again by the definition of $\to$, the fact that $\sum_{i=1}^k {}^{n_i} w \to x+y$ implies that there is a path from $w$ to $z$, which would be a return from $w$ to $\mathfrak{c}$, which cannot happen.
    
            Therefore, if $v$ does not have any of the given properties then $\langle v \rangle$ is not simple. This completes the proof. 
        \item
            We first check that
            \begin{equation}\label{balconwid}
                 I \subseteq \sum_{[v]\in \max(C(I))}\langle v \rangle.
            \end{equation}
            It is enough to show that any vertex $x\in I$ is contained in the right hand side of (\ref{balconwid}). By Lemma~\ref{pesto}, 
            $x = \sum_{i=1}^n x_i(n_i)$, where 
            $x_i\in I$ are either sinks or are on a cycle. Thus $[x_i]\in C(I)$, for all $1\leq i \leq n$.  If $[x_i]$ is maximal, then $\langle x_i \rangle$ appears in the right of (\ref{balconwid}). If $[x_i]$ is not maximal, then there is a maximal element $[z] \in C(I)$ which appears in the right of (\ref{balconwid}) and    $[x_i] < [z]$. Thus $\langle x_i\rangle \subseteq \langle z \rangle$. This shows that all $x_i$ belong to the right hand side of (\ref{balconwid}) and so does $x$. Therefore (\ref{balconwid}) follows. The converse inclusion is trivial and thus $$
                 I = \sum_{[v]\in \max(C(I))}\langle v \rangle.\qedhere$$

\comment{
Let $\{v_1, \cdots, v_n\}$ be a representative set of $S=\{[v]\in \max(C(I))\}$, i.e., $S=\{[v_1],\cdots, [v_n]\}$. That is, from each maximal cycle inside $I$ we pick and fix a vertex. }
\comment{
                 
            Next we show that this sum is indeed a direct sum. Consider $[x]\in \max(C(I))$ and 
            \begin{equation}\label{windwind}
        \langle x \rangle \cap \sum_{[x]\not = [v]\in \max(C(I))}\langle v \rangle.
             \end{equation}
             We show that this intersection is $\{0\}$, which implies that the sum is a direct sum. 
             Suppose $y\in E^0$ is contained in (\ref{windwind}). Since $y \in \langle x \rangle $, and $x$ is either a sink or on a cycle, a similar argument as in the proof of Theorem~\ref{lunhdfgdt}\ref{it:conditionforsimplicity} shows that there is a path from $y$ to $x$.
            {\color{red}I disagree. Consider the graph
             \[\begin{tikzpicture}
                 \node (x) at (0,0) {$x$};
                 \node[above right of = x] (y) {$y$};
                 \node[below right of = y] (v) {$v$};
                 \draw[->] (x) to [out = 180, in = 270, looseness = 8] (x);
                 \draw[->] (x) to (y);
                 \draw[->] (v) to [out = 0, in = 270, looseness = 8] (v);
                 \draw[->] (v) to (y);
             \end{tikzpicture}\]
             and the ideal $I=\langle x\rangle + \langle v\rangle$. Then $C(I) = E^0$ and $\max(C(I))=\left\{x,v\right\}$. But $\langle x\rangle$ and $\langle v\rangle$ are not independent (and there is not path from $y$ to $x$, which is what the proof claims can be done.

             But what \textbf{can} be done is the following: Change the statement of the theorem to a sum and add the following phrase: ``Moreover, if $E$ does not have any cycles with a non-returning exit, then the sum above is direct.''. Under this additional hypothesis (that all exists to all cycles admit a return), then given property and the rest of the proof is correct.
             
             Actually, it can be simplified a bit: Suppose $y$ is a nonzero element of $\langle x\rangle \cap\sum_{[x]\neq[v]\in \max(C(I))}\langle v\rangle$. Then $y\in\langle x\rangle$. By item \ref{it:conditionforsimplicity}, $\langle x\rangle$ is simple, so $\langle y\rangle=\langle x\rangle$. But then $\langle x\rangle\subseteq\sum_{[x]\neq[v]\in\max(C(I))}\langle v\rangle$, so $x=\sum t_v$, where $t_v\in\langle v\rangle$ and $[v]\in\max(C(I))$. But since $x\neq 0$ then at least one of those $t_v$ has to be nonzero as well. Consider $v$ for which this is true. Again by item~(\ref{it:conditionforsimplicity}), we have $\langle v\rangle=\langle t_v\rangle\subseteq\langle x\rangle$. But $[x]$ and $[v]$ are distinct maximal elements of $C(I)$, {\color{red}a contradiction}. This shows that (\ref{windwind}) has to be $\{0\}$.}
             
             Thus $\langle x \rangle \subseteq \langle y \rangle$. Therefore $$\langle x \rangle \subseteq \sum_{[x]\not = [v]\in \max(C(I))}\langle v \rangle.$$
             Thus $x=\sum t_v$, where $t_v \in \langle v \rangle$, $[v] \in \max(C(I))$ and $[v] \not = [x]$. One more argument as in the proof of Theorem~\ref{lunhdfgdt}(c) shows that $x\geq v$, i.e., $[x]\geq [v]$. But $[x]$ and $[v]$ are distinct maximal elements of $C(I)$, {\color{red}a contradiction}. This shows that (\ref{windwind}) has to be $\{0\}$.}
     \end{enumerate}
\end{proof}

We finish the paper by unifying the decomposibility of graph algebras with their talented monoids.  Recall  that for a hereditary subset $H\subseteq E^0$, we say that a path $\alpha=e_1e_2\cdots e_n$ \emph{flows} to $H$ if 
$s(e_n)\not \in H$ but $r(e_n)\in H$. 

\begin{thm}\label{decomtal}
    Let \( E \) be a row-finite graph. The talented monoid \( T_E \) is decomposable if and only if there exist nontrivial hereditary and saturated subsets \( H \) and \( K \) such that \( H \cap K = \varnothing \) and, for any \( v \in E^0 \setminus (H \cup K) \), there is a nonzero but finite number of paths from \( v \) that flow to either \( H \) or \( K \).

\end{thm}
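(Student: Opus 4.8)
The plan is to pass through the lattice isomorphism $\phi\colon\mathcal{L}(E)\to\mathcal{L}(T_E)$, $H\mapsto\langle H\rangle$, from~(\ref{corpara}) in order to convert the statement ``$T_E$ is decomposable'' into a purely combinatorial condition on $E^0$, and then to recognize that condition as an instance of Lemma~\ref{gfhfhfh22}. By definition, $T_E$ is decomposable precisely when $T_E=I\oplus J$ for two \emph{nonzero} $\mathbb{Z}$-order ideals $I$ and $J$, that is, $I\cap J=\{0\}$ and $I+J=T_E$. Since every $\mathbb{Z}$-order ideal of $T_E$ has the form $\langle H\rangle$ for a unique hereditary saturated subset $H$, I would write $I=\langle H\rangle$ and $J=\langle K\rangle$; here $I,J$ are nonzero exactly when $H,K$ are nontrivial.

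Next I would translate the two defining conditions of a direct-sum decomposition through $\phi$. As $\phi$ is a lattice isomorphism, it sends meets to meets and joins to joins. The meet in $\mathcal{L}(E)$ is set intersection (the intersection of hereditary saturated sets is again hereditary saturated) and the meet in $\mathcal{L}(T_E)$ is ideal intersection, so $\langle H\rangle\cap\langle K\rangle=\langle H\cap K\rangle$; hence $\langle H\rangle\cap\langle K\rangle=\{0\}$ is equivalent to $H\cap K=\varnothing$. The join in $\mathcal{L}(E)$ is the hereditary saturated closure $\overline{H\cup K}$ (note that $H\cup K$ is already hereditary), while the join in $\mathcal{L}(T_E)$ is the ideal sum $I+J$ (a $\mathbb{Z}$-order ideal since $T_E$ is conical and refinement), so $\langle H\rangle+\langle K\rangle=\langle\overline{H\cup K}\rangle$; hence $\langle H\rangle+\langle K\rangle=T_E=\langle E^0\rangle$ is equivalent to $\overline{H\cup K}=E^0$.

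It then remains to show that, setting $L:=H\cup K$, the equality $\overline{L}=E^0$ is equivalent to the flow condition in the statement, which is the crux of the argument and where Lemma~\ref{gfhfhfh22} enters. Since $H$ and $K$ are disjoint and hereditary, a path flows to $L$ if and only if it flows to $H$ or to $K$, so the flow condition reads: every $v\in E^0\setminus L$ admits a nonzero but finite number of paths flowing to $L$. Applying Lemma~\ref{gfhfhfh22} to the hereditary set $L$: if this flow condition holds for every $v\in E^0\setminus L$, then in particular it holds for every $w\in T(v)\setminus L$ (each such $w$ lies in $E^0\setminus L$), so condition~(2) of the lemma is met and every $v\in E^0\setminus L$ lies in $\overline{L}$, giving $\overline{L}=E^0$. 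Conversely, if $\overline{L}=E^0$, then each $v\in E^0\setminus L=\overline{L}\setminus L$ satisfies condition~(1), hence condition~(2), of the lemma; taking the trivial path $w=v\in T(v)\setminus L$ recovers a nonzero finite family of paths from $v$ itself to $L$, which is exactly the theorem's flow condition at $v$.

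The main obstacle I anticipate is precisely this last translation: reconciling the per-tree quantifier ``for each $w\in T(v)\setminus L$'' appearing in Lemma~\ref{gfhfhfh22}(2) with the per-vertex quantifier ``for each $v\in E^0\setminus L$'' of the theorem. The observation that dissolves the difficulty is that the theorem ranges over all vertices outside $L$, so the tree condition for a fixed $v$ is automatically subsumed, while $v\in T(v)$ lets us recover the statement about $v$ itself. By contrast, the meet-translation $H\cap K=\varnothing$ is routine and carries over verbatim in both directions, so assembling the forward and backward implications from the three steps above completes the proof.
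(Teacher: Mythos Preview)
Your proof is correct and follows essentially the same approach as the paper: translate the direct-sum decomposition $T_E=\langle H\rangle\oplus\langle K\rangle$ through the lattice isomorphism $\mathcal{L}(E)\cong\mathcal{L}(T_E)$ into the conditions $H\cap K=\varnothing$ and $\overline{H\cup K}=E^0$, and then invoke Lemma~\ref{gfhfhfh22} with the hereditary set $H\cup K$ to recast the latter as the flow condition. Your version is in fact more explicit than the paper's in two places: you spell out why ``flows to $H$ or to $K$'' coincides with ``flows to $H\cup K$'' (using disjointness and heredity), and you address the quantifier mismatch between the per-vertex condition in the theorem and the per-tree condition in Lemma~\ref{gfhfhfh22}(2), both of which the paper leaves implicit.
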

\begin{proof}

Suppose \( T_E = I \oplus J \), where \( I \) and \( J \) are \(\mathbb{Z}\)-order ideals of \( T_E \). Then \( I = \langle H \rangle \) and \( J = \langle K \rangle \), where \( H \) and \( K \) are hereditary and saturated subsets of \( E \) satisfying \( H \cap K = \varnothing \) and \( E^0 = \overline{H \cup K} \). By Lemma~\ref{gfhfhfh22},  choosing \( H \cup K \) as the hereditary set, it follows that for any \( v \in E^0 \setminus (H \cup K) \), there is a nonzero but finite number of paths from \( v \) that flow to either \( H \) or \( K \).

Conversely, suppose that $H\cap K=\varnothing$. This implies that  the $\mathbb Z$-order ideals $I=\langle H \rangle$  and $J=\langle K \rangle$ intersect trivially. Again choosing the hereditary subset $H\cup K$, an application of Lemma~\ref{gfhfhfh22} implies that $E^0=\overline {H \cup K}$. Thus, we conclude that $T_E=I\oplus J.$
\end{proof}

We can then relate the decomposition of graph algebras to that of their talented monoids.

\begin{cor}\label{fgfyr67433}
    Let $E$ be a row-finite graph. The following are equivalent:
\begin{enumerate}[\upshape(1)]    
    
  \item   the talented monoid $T_E$ is decomposable;
  
  \item the Leavitt path algebras $L_\K(E)$ is decomposable;
  
  \item the graph $C^*$-algebra $C^*(E)$ is decomposable. 
\end{enumerate}

\end{cor}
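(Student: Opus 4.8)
The plan is to route all three statements through the purely graph-theoretic criterion already isolated in Theorem~\ref{decomtal}: there exist nonempty, pairwise disjoint hereditary and saturated subsets $H,K\subseteq E^0$ with $\overline{H\cup K}=E^0$ (equivalently, the flow condition of Lemma~\ref{gfhfhfh22}). Theorem~\ref{decomtal} already gives the equivalence of (1) with this criterion, producing the explicit decomposition $T_E=\langle H\rangle\oplus\langle K\rangle$. So the task reduces to proving that each of (2) and (3) is equivalent to the \emph{same} graph condition.

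For the easy direction (graph condition $\Rightarrow$ (2) and (3)), I would use the lattice isomorphism of Theorem~\ref{peixevoador} between hereditary saturated subsets and graded (resp. gauge-invariant) ideals. Since this is a lattice isomorphism, $I(H)\cap I(K)=I(H\cap K)=I(\varnothing)=\{0\}$, while $I(H)+I(K)$ is the ideal attached to the hereditary saturated closure $\overline{H\cup K}=E^0$, hence equals $I(E^0)=L_\K(E)$ (and likewise $\overline{I(H)+I(K)}=C^*(E)$). As $H,K$ are nonempty, $I(H),I(K)\neq\{0\}$, so $L_\K(E)=I(H)\oplus I(K)$ and $C^*(E)=I(H)\oplus I(K)$ are genuine nontrivial decompositions.

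The substance of the corollary lies in the converse, and this is where I expect the main obstacle: given an abstract ring decomposition $L_\K(E)=A\oplus B$ (resp. a $C^*$-decomposition $C^*(E)=A\oplus B$) into nonzero two-sided (resp. closed) ideals, I must show that $A$ and $B$ are automatically graded (resp. gauge-invariant). Once this is done, Theorem~\ref{peixevoador} yields $A=I(H)$ and $B=I(K)$; then $A\cap B=\{0\}$ forces $H\cap K=\varnothing$, the relation $A+B=L_\K(E)$ forces $\overline{H\cup K}=E^0$, and $A,B\neq\{0\}$ forces $H,K\neq\varnothing$, recovering the graph condition and hence (1).

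For the algebraic case I would argue as follows. For $a\in A$ choose a local unit $u=\sum_{v\in F}v$ with $F\subseteq E^0$ finite and $ua=au=a$; note $u$ is homogeneous of degree $0$. Writing $u=e+f$ with $e\in A$ and $f\in B$, the trivial intersection $A\cap B=\{0\}$ makes $e,f$ orthogonal idempotents and exhibits $e$ as a central idempotent of the unital graded corner $uL_\K(E)u$. Since $L_\K(E)$ is semiprime, so is this corner, and a short degree argument (the top nonzero homogeneous component $e_N$ of a central idempotent satisfies $e_N^2=0$, hence generates a square-zero ideal and must vanish) shows $e$ is homogeneous of degree $0$. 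Then $a=ea=ae=eae$, so every homogeneous component $a_n=ea_ne$ lies in $eL_\K(E)e\subseteq A$; thus $A$, and by symmetry $B$, is graded. For the $C^*$-case, a complemented closed ideal $A$ corresponds to a clopen subset of $\mathrm{Prim}(C^*(E))$ (equivalently, to a central projection in the multiplier algebra); since the gauge circle $\mathbb{T}$ is connected and acts continuously on $\mathrm{Prim}(C^*(E))$, each orbit lies in a single clopen piece, so this clopen set is gauge-invariant and hence so is $A$. Finally, I would record the three-way equivalence by observing that all three conditions are now equivalent to the graph condition, noting in passing that (2)$\Leftrightarrow$(3) can also be read off directly from the common correspondence with $\mathcal L(E)$.
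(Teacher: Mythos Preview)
Your argument is correct, but it is considerably more elaborate than what the paper does. The paper's proof is a two-line citation: it simply observes that the graph-theoretic criterion of Theorem~\ref{decomtal} is \emph{exactly} the criterion already established in \cite{gonzalo} for decomposability of $L_\K(E)$ and in \cite{hong} for decomposability of $C^*(E)$, so all three conditions are equivalent to that common criterion.

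Your route and the paper's share the same hub---the existence of disjoint nontrivial hereditary saturated $H,K$ with $\overline{H\cup K}=E^0$---but where the paper outsources the equivalences (2)$\Leftrightarrow$(graph) and (3)$\Leftrightarrow$(graph) to the literature, you supply self-contained proofs. Your graded-ring argument (central idempotents in a semiprime $\mathbb{Z}$-graded unital ring are homogeneous of degree $0$, applied in the corner $uL_\K(E)u$) is sound and in fact recovers the key step of \cite{gonzalo}; your $C^*$-argument via connectedness of $\mathbb{T}$-orbits in $\mathrm{Prim}(C^*(E))$ is the standard way to see that direct-summand ideals are gauge-invariant. The trade-off is clear: the paper's proof is shorter and keeps the focus on the monoid side, while yours is independent of \cite{gonzalo,hong} but imports two facts not proved in the paper (semiprimeness of $L_\K(E)$, from \cite{lpabook}, and continuity of the induced $\mathbb{T}$-action on the primitive ideal space). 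If you keep your version, those two ingredients should be cited explicitly.
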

\begin{proof} The decomposability of the graph algebras \( L_\K(E) \) and \( C^*(E) \) was established in \cite{gonzalo} and \cite{hong}, respectively, under the exact same conditions as in Theorem~\ref{decomtal}.
    \end{proof}

{\bf Acknowledgment.}
D. Gon\c{c}alves was partially supported by Conselho Nacional de Desenvolvimento Cient\'ifico e Tecnol\'ogico (CNPq) and Funda\c{c}\~ao de Amparo \`a Pesquisa e Inova\c{c}\~ao do Estado de Santa Catarina (Fapesc) - Brazil. R. Hazrat acknowledges Australian Research Council Discovery Project Grant DP230103184. This project initiated at Mathematisches Forschungsinstitut Oberwolfach when the authors were visiting under Research in Pairs in 2022.

\end{document}